\newcommand{\CC}{\mathbb{C}}
\newcommand{\RR}{\mathbb{R}}
\newcommand{\ZZ}{\mathbb{Z}}
\def\Log{{\rm Log\,}} 
\def\R{\mathbb{R}}
\def\C{\mathbb{C}}
\def\Z{\mathbb{Z}}
\def\N{\mathbb{N}}
\def\T{\mathbb{T}}
\newtheorem{lemma}{Lemma}[section]
\newtheorem{theorem}[lemma]{Theorem}
\newtheorem{corollary}[lemma]{Corollary}
\newtheorem{notation}[lemma]{Notation}
\newtheorem{definition}[lemma]{Definition}
\newtheorem{remark}[lemma]{Remark}
\newtheorem{example} [lemma]{Example}
\newtheorem{problem} [lemma]{Problem}
\newcommand{\Tnp}{\mathscr{T}_n^p}
\newcommand{\cD}{\mathcal{D}}
\begin{document}

\title{Complex Tropical Currents, Extremality, and Approximations
\thanks{2010 Mathematics Subject Classification: 32C30, 14T05, 42B05, 14M25.}
}
\author {Farhad Babaee}
\date{}
\maketitle

\begin {abstract}
To a tropical $p$-cycle $V_{\mathbb{T}}$ in $\mathbb{R}^n$, we naturally associate a normal closed and $(p,p)$-dimensional current on $(\mathbb{C}^*)^n$ denoted by $\mathscr{T}_n^p(V_{\mathbb{T}})$. Such a ``tropical current'' $\mathscr{T}_n^p(V_{\mathbb{T}})$ will not be an integration current along any analytic set, since its support has the form ${\rm Log\,}^{-1}(V_{\mathbb{T}})\subset (\mathbb{C}^*)^n$, where ${\rm Log\,}$ is the coordinate-wise valuation with $\log(|.|)$. We remark that tropical currents can be used to deduce an intersection theory for effective tropical cycles. Furthermore, we provide sufficient (local) conditions on tropical $p$-cycles  such that their associated tropical currents are ``strongly extremal'' in $\mathcal{D}'_{p,p}((\mathbb{C}^*)^n)$. In particular, if these conditions hold for the effective cycles, then the associated currents are extremal in the cone of strongly positive closed currents of bidimension $(p,p)$ on $(\mathbb{C}^*)^n$. Finally, we explain certain relations between approximation problems of tropical cycles by amoebas of algebraic cycles and  approximations of the associated currents by positive multiples of integration currents along analytic cycles. 
\end{abstract}

\section{Introduction}
A positive closed current $T$ on a complex manifold $X$ is called extremal in the cone of closed positive currents, if any decomposition of $T$ into a sum of two non-zero positive closed currents, $T=T_1 + T_2$~, implies that $T_1$ and $T_2$ are positive multiples of $T$ (see Section 2 for an introduction to currents). It was noted in  \cite{DemEx} that the cone of positive closed $(p,p)$-currents is the closed convex envelope of the extremal elements of this cone (endowed the weak topology of currents). Pierre Lelong in \cite{Lelong} showed that integration currents along irreducible analytic cycles are extremal, and  also asked whether positive multiples of integration currents along irreducible analytic cycles are the only extremal currents. Subsequently in \cite{DemEx}, Jean-Pierre Demailly found an example of an extremal current in $\mathbb{CP}^2$, namely $T_D:=dd^c\, \log \max \{|z_0|,|z_1|,|z_2| \}$, which has a support of real dimension $3$, and therefore cannot be an integration current along any analytic set. Later on, Eric Bedford noticed that many extremal currents naturally occur in dynamical systems on several complex variables whose supports are in general fractal sets, and therefore not analytic (see \cite{sibony}, \cite{Dinh-Sibony}, \cite{Guedj}, \cite{sib-dinh-rigidity} and references therein). 
\vskip 1mm 
Here we attempt to generalize Demailly's example, using the fact that $T_D$ has the extremality property, but in a stronger sense~: that for every other normal closed current $\tilde{T}$ of bidimension $(1,1)$ on $\mathbb{CP}^2$, which has the same support as $T_D$ there exists a $\rho \in \CC$ such that $\tilde{T} = \rho \, T_D\,$. Evidently, this \textbf{strong extremality} is a property of the supports. Defining $\Log: (\CC^*)^n \rightarrow \RR^n$, $(z_1,\dots,z_n) \mapsto (\log|z_1|,\dots,\log|z_n|)$, we note that for $n=2$, the restriction of $T_D$  to $(\CC^*)^2$ is just the closed positive current of bidimension $(1,1)$ given by $dd^c\, \log \max \{1,|z_1|,|z_2| \}$. Thus, the support of this current is $\Log^{-1}(L_{\mathbb{T}})$, where $L_{\mathbb{T}}$ is a tropical line in $\RR^2$. Accordingly, we attach to any tropical $p$-cycle ${V}_{\mathbb{T}}\subset \RR^n$ (Definition \ref{trop-cyc-def}) a normal closed current of bidimension $(p,p)$ with support equal to $\Log^{-1}(V_{\mathbb{T}})\subset (\CC^*)^n$. We refer to such a current as the \textbf{tropical current} attached to the tropical $p$-cycle $V_{\mathbb{T}}\subset \mathbb R^n$, denoted by $\mathscr T_n^p (V_{\mathbb T})$. Such a construction could actually be carried out for any weighted polyhedral $p$-dimensional complex $\mathcal P$ and one will prove that if $\mathcal P$ fulfills the balancing condition at each {facet} (\textit{i.e.} face of codimension one), then closedness of $\mathscr T_n^p (\mathcal P)$ is implied (Theorem \ref{maintheohigherdim}). Moreover, extremality of the tropical currents in the stronger sense is detectable from the combinatorial data of the corresponding tropical cycles~; suppose that a tropical $p$-cycle $V_{\T}\subset \RR^n$ is connected in codimension $1$. Also that at each facet $W,$ a set of primitive vectors $\{v_1,\dots,v_s \}$ which makes the balancing condition hold at $W$ satisfies the following two conditions~: first $\{h_W(v_1),\dots,h_W(v_s)\}$, spans the dual space $W^{\perp}$ as an $\RR$-basis, where $h_W$ is the projection along $W$; second, every proper subset of $\{h_W(v_1),\dots,h_W(v_s)\}$ is a set of independent vectors~; then the current $\mathscr T_n^p(V_{\mathbb T})$ stands as a strongly extremal element of $(p,p)$-dimensional normal closed currents on $(\CC^*)^n$, (Theorem \ref{maintheohigherdim}).\\

Employing tropical geometry, we will also show that tropical currents associated to effective tropical hypersurfaces are of the form $dd^c \,[q \circ \Log]\,,$ 
where  $q: \RR^n\rightarrow \RR$ is a tropical polynomial and,
$$
dd^c \,[q \circ \Log] = \mathscr{T}^p_n(V_{\T}(q))\,, 
$$
where $V_{\T}(q)$ is the associated (effective) tropical hypersurface (Theorem \ref{thm-hyper-ddc}). We remark that using a formula of Alexander Rashkovskii in \cite{Rash} and expansion of Monge-Amp\`ere measure of a tropical polynomial (as in Example 3.20 in \cite{yger}), the above relation provides an intersection theory for the effective tropical cycles (Remark \ref{rem-intersection}). 

\vskip 2mm
In addition we prove (Theorem \ref{current-amoeba-approx}) that if for a $V_{\mathbb{T}}$, $\mathscr{T}_n^p(V_\mathbb{T})$ is positive and extremal, and $V_{\mathbb{T}}$ is \textbf{set-wise} approximable (Definition \ref{set-wise-def}) by amoebas of algebraic cycles in $(\CC^*)^n$, then   $\mathscr{T}_n^p(V_\mathbb{T})$ is in the closure (in the sense of currents) of 
$$
\mathcal{I}^p((\CC^*)^n)= \big\{\lambda[Z]: \, Z\ \text{ $p$-dim. irreducible analytic subset in } (\CC^*)^n, \, \lambda \geq 0 \big\}.
$$ 

\vskip 2mm
This article is structured as follows. In Section 2, we state the preliminaries of the theory of currents. Section 3 is a brief discussion of tropical geometry. In Section 
$4$, we define the tropical current 
$\mathscr T_n^p(\mathcal P)$ attached to a weighted $p$-dimensional polyhedral complex $\mathcal P\,.$ In this section, we state and prove the main result of this paper about extremality (mentioned above)~; this is done step by step, first in the case $p=1$, then in the case $p>1$ assuming $\mathcal P$ has a single facet and then finally in the general case of $p>1$ tropical cycles. In Section 5, we illustrate how these constructions may be used in order to produce extremal currents on complex projective planes, and we remark how an intersection theory can be deduced. In Section 6, we discuss the problems of the approximability of tropical cycles by amoebas and will explain how it could be related to the problem of approximating the tropical currents by analytic cycles. We end this paper with some open problems. 

\section{Currents}\label{currents-section}
Throughout this paper $X$ is either $(\CC^*)^n$, $\CC^n$ or $\mathbb{CP}^n,$ which are analytic complex manifolds of dimension $n$. If $k,p,q$ are non-negative integers, possibly $k=\infty$, we denote by $\mathcal{C}^k_{p,q}(X)$ (resp. $\mathcal{D}^k_{p,q}(X)$) the space of differential forms of bidegree $(p,q)$ and of class $C^k$ (resp. with compact support) on $X$. The elements of $\mathcal{D}^k_{p,q}(X)$ are  called test forms. 
\vskip 1mm
The space of currents of \textbf{order} $k$ and of bidimension $(p,q)$, or equivalently of bidegree $(n-p,n-q),$ is by definition the topological dual space $[\mathcal{D}^k_{p,q}(X)]'$, where $\mathcal{D}^k_{p,q}(X)$ is endowed with the inductive limit topology.  $\mathcal{D}_{p,q}^{\infty}(X)$ (resp. $[\mathcal{D}_{p,q}^{\infty}(X)]'$) is usually denoted instead by $\mathcal{D}_{p,q}(X)$ (resp. by $\mathcal{D}'_{p,q}(X)$). A current $T \in \mathcal{D}'_{p,q}(X)$ is called \textbf{closed} if for every $\alpha \in \mathcal{D}_{p-1,q}(X)$,
\begin{equation}\label{def-closed}
\langle dT , \alpha \rangle := (-1)^{p+q+1} \langle T, d\alpha \rangle \,,
\end{equation}
vanishes. 
\vskip 2mm
 An important concept in this theory is {positivity}.
\begin{definition} \rm
 A form $\psi \in \mathcal{C}_{p,p}^{0}(X)$ is called 
\begin{itemize}
\item {strongly positive,} if for all $z \in X$, $\psi(z)$ is in the convex cone generated by $(p,p)$ forms of the type
$$
(i \psi_1 \wedge \bar{\psi}_1)\wedge \dots \wedge (i \psi_p \wedge \bar{\psi}_p),
$$  
where $\psi_j \in \bigwedge^{1,0} T^*_z X\,;$ 
\item positive, if at every point $z \in X$ and all $p$-planes $F$ of the tangent space $T_zX,$ the restriction $\psi(z)_{|F}$ is a strongly positive $(p,p)$-form. 
\end{itemize}
A current $T \in \cD'_{p,p}(X)$ is called strongly positive (resp. positive), if $$\langle T, \psi \rangle \geq 0$$ for every positive (resp.  strongly positive) test form $\psi \in \cD_{p,p}(X)$.  We denote the set of positive (resp. strongly positive) closed currents of bidimension $(p,p)$ by 
$$
PC^p(X), \quad (\text{resp. } SPC^{p}(X)).
$$
\end{definition}



In this paper we are mainly concerned with {\bf extremal} currents. Recall that the {\bf support} of a current is the smallest closed set in the ambient space $X$ such that on its complement the current vanishes, and a current $T$ is called \textbf{normal} if $T$ and $d T$ are of order zero. One can see that every closed positive current is normal.

\begin{definition}\label{extremality-def}
\rm
A current $T \in PC^p(X)$ (resp. $\in SPC^p(X)$) is called extremal in $PC^p(X)$ (resp. in $SPC^p(X)$) if whenever we have a decomposition $T = T_1 + T_2$ with $T_1, T_2 \in PC^p(X)$ (resp. $\in SPC^p(X)$), then there exist $\lambda_1, \lambda_2 \geq 0$ such that $T= \lambda_1 T_1$ and $T= \lambda_2 T_2$~. A closed current $T \in \mathcal{D}'_{p,p}(X)$ of order zero is called strongly extremal, if for any closed current $\tilde{T}\in \mathcal{D}'_{p,p}(X)$ of order zero which has the same support as $T$, there exists a $\rho \in \CC$ such that $T =\rho \, \tilde{T}.$
\end{definition}

\begin{remark}{\rm Note that the extremality properties are invariant under invertible affine linear transformations. Furthermore, strong extremality of a positive (resp. strongly positive) closed current $T \in \mathcal{D}'_{p,p}(X)$ implies extremality $PC^p(X)$ (resp. in  $SPC^p(X)$)~. In addition, strong extremality can be considered as a ``rigidity'' property of supports (see also \cite{sib-dinh-rigidity}). Therefore, if a normal closed $(p,p)$-dimensional current $T$ is supported by a set of the form $\Log^{-1}(V_{\mathbb{T}})\subset (\CC^*)^n$ for a tropical $p$-cycle $V_{\mathbb{T}}$, then the question of the strong extremality of $T$ relies merely on the combinatorial structure of $V_{\mathbb{T}}$. 
}
\end{remark}

Let us denote 
$$
\mathcal{I}^p(X)= \big\{\lambda [Z]: \  \lambda \geq 0, \, Z \subset X,\ \text{$p\,$-\,dimensional irreducible analytic subset }  \big\},
$$
and by $\mathcal{E}^p(X)$ the set of extremal elements of $SPC^p(X)$. Using the support theorems below, it is not hard to see that (\cite{Lelong}, \cite{Dembook}) 
$$
\mathcal{I}^p(X) \subset \mathcal{E}^p(X). 
$$

 
\subsection{Support theorems}

We need to quote two important structure theorems for supports of currents. For a through treatment see \cite{Dembook}.

\vskip 2mm
Let $S \subset X$ be a closed $C^1$ real submanifold of $X$  $(=\,(\CC^*)^n$, $\CC^n$ \textnormal{or} $\mathbb{CP}^n)$. The complex dimension of the holomorphic tangent in $S$, i.e. 
$$
{\rm dim}_{\CC} \, (T_xS \cap i T_xS) \, , 
$$
is called the \textbf{Cauchy-Riemann} dimension of $S$ at $x$. The maximal dimension 
$${\rm max}_{x \in S} \,{\rm dim}_{\CC} \, (T_xS \cap i T_xS)
$$
is called the Cauchy-Riemann dimension of $S$, denoted by ${\rm CRdim }\, S$. If this dimension is constant for all $x \in S$, then $S$ is called a Cauchy-Riemann submanifold of $X$. 
\vskip 2mm
The following theorem implies that a complex structure of dimension at least $p$ is needed on the support of a normal current in order to accommodate $(p,p)$ test forms.

\begin{theorem}[Theorem 2.10 in \cite{Dembook}]\label{supportsmall}
Suppose $T \in \cD'_{p,p}(X)$ is a normal current. If the support of $T$ is contained in a real submanifold $S$ of Cauchy-Riemann dimension less than $p$, then $T=0$.
\end{theorem}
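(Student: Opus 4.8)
The plan is to combine the representation of normal currents by integration with a pointwise linear-algebra argument governed by the Cauchy--Riemann dimension. Since $T$ is normal, $T$ and $dT$ are both of order zero; in particular $T$ is representable by integration, so I would write $T=\vec{T}\,\|T\|$ with $\|T\|$ a positive Radon measure carried by $S$ and $\vec{T}$ a $\|T\|$-measurable field of unit $(p,p)$-vectors, defined $\|T\|$-almost everywhere. Using a partition of unity the statement localises near an arbitrary $x_0\in S$, and I would fix holomorphic coordinates in which $S$ is realised as a $C^1$ graph. Everything then reduces to showing $\vec{T}(x)=0$ for $\|T\|$-a.e.\ $x$, equivalently that $\langle T,\psi\rangle$ vanishes for every test form $\psi\in\cD_{p,p}(X)$.

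The decisive step is a tangency assertion: for $\|T\|$-almost every $x\in S$ the $(p,p)$-vector $\vec{T}(x)$ lies in $\bigwedge^{2p}(T_xS\otimes\CC)$, so that $T$ has no component transverse to $S$ and $\langle T,\psi\rangle$ depends only on the restriction $\psi|_S$. This is precisely the point at which normality -- rather than mere order zero -- is indispensable. Indeed, an order-zero current supported on $S$ may perfectly well point in a normal direction; for example a measure on a real line in $\CC$ dual to the normal $1$-form defines an order-zero $1$-current whose boundary is of order one, so such a current fails to be normal. The hypothesis that $dT$ be again of order zero is exactly what rules out these transverse parts. I would establish the tangency either by invoking the structure theory of flat (hence normal) currents supported on $C^1$ submanifolds, or by a direct slicing / integration-by-parts argument in the graph coordinates, transferring the offending derivatives onto $dT$ -- a manipulation that is legitimate precisely because $dT$ is a measure.

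With tangency in hand the conclusion is pure linear algebra at each point. Because $T\in\cD'_{p,p}(X)$, its direction $\vec{T}(x)$ is of pure type $(p,p)$; were it a nonzero tangent $(p,p)$-vector, it could be assembled only from $(1,0)$-directions tangent to $S$ and their conjugates, so the holomorphic tangent space $T_xS\cap iT_xS$ would have to contain $p$ independent complex directions, i.e.\ ${\rm dim}_{\CC}(T_xS\cap iT_xS)\ge p$. This contradicts ${\rm dim}_{\CC}(T_xS\cap iT_xS)\le{\rm CRdim}\,S<p$; equivalently, every $(p,p)$-form restricts to zero on a submanifold of Cauchy--Riemann dimension below $p$. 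Hence $\vec{T}(x)=0$ almost everywhere and $T=0$. The genuine obstacle throughout is the tangency step of the second paragraph: the linear-algebra finish and the localisation are routine, whereas showing that a normal current supported on $S$ carries no transverse mass is the geometric-measure-theoretic core, and it is there that the full strength of the normality hypothesis must be used.
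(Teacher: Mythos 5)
The paper itself offers no proof of this statement: it is quoted verbatim as Theorem 2.10 of Demailly's book \cite{Dembook}, so your proposal can only be measured against the standard proof given there. Your overall architecture coincides with it: localize, show that normality forces the current to be tangent to $S$, then kill the tangential direction by pointwise linear algebra in which the Cauchy--Riemann dimension enters. The tangency part of your plan is sound. Normal currents are flat (note your parenthetical ``flat (hence normal)'' inverts the correct implication), and the clean implementation of your ``transfer the derivative onto $dT$'' idea is the Leibniz identity $du \wedge T = d(uT) - u\, dT = 0$, valid for every real $C^1$ function $u$ vanishing on $S$ because $T$ and $dT$ have measure coefficients supported in $S$ (so $uT=0$ and $u\,dT=0$). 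Your example of a transverse order-zero current that fails to be normal is also correct and shows you have located where normality is indispensable.

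There is, however, a genuine gap at the final step --- the only place where the hypothesis $\mathrm{CRdim}\, S < p$ is used --- and the justification you offer for it is false. You claim that ``equivalently, every $(p,p)$-form restricts to zero on a submanifold of Cauchy--Riemann dimension below $p$.'' Take $S=\{\mathrm{Im}\, z_1 = \mathrm{Im}\, z_2 = 0\} \simeq \RR^2 \subset \CC^2$, which has Cauchy--Riemann dimension $0<1$: the $(1,1)$-form $i\, dz_1 \wedge d\bar z_2$ restricts on $S$ to $i\, dx_1 \wedge dx_2 \neq 0$, and one can even arrange a real such example. So tangency plus ``forms restrict to zero'' cannot be the mechanism: a $2p$-vector tangent to $S$ may pair non-trivially with $(p,p)$-forms. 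What proves the theorem is the interaction of tangency with the \emph{purity} of the direction $\vec{T}(x)$ as a $(p,p)$-vector, via a bidegree-splitting argument that is absent from your proposal. If $\alpha$ is a real conormal covector to $S$ at $x$, tangency gives $\iota_{\alpha}\vec{T}(x)=0$; writing $\alpha = \alpha^{1,0}+\overline{\alpha^{1,0}}$, the two contractions $\iota_{\alpha^{1,0}}\vec{T}(x)$ and $\iota_{\overline{\alpha^{1,0}}}\vec{T}(x)$ have types $(p-1,p)$ and $(p,p-1)$, hence must vanish \emph{separately}. Since the $(1,0)$-parts of real conormals span a complex space of dimension $n-q$, where $q=\mathrm{CRdim}_x S$ (their common kernel is exactly $T_xS\cap iT_xS$), one may choose coordinates so that $\vec{T}(x)$ is annihilated by contraction with $dz_1,\dots,dz_{n-q}$ and $d\bar z_1,\dots,d\bar z_{n-q}$; then every coefficient $T_{I\bar J}$ with $I\not\subset\{n-q+1,\dots,n\}$ vanishes, and since $|I|=p>q$ no coefficient survives, so $\vec{T}(x)=0$. (Demailly performs this same splitting at the level of currents: $du\wedge T=0$ forces $\partial u \wedge T=0$ and $\bar\partial u\wedge T = 0$ because these components have different bidegrees.) This step is where the theorem is decided; declaring it ``routine'' while supporting it with a false equivalence is precisely the gap that needs repairing.
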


\vskip 1mm

The next theorem about supports permits us to streamline a current if its support is a fiber space. 

\begin{theorem}[Theorem 2.13 in \cite{Dembook}]\label{supportdecom}
Let $S \subset X$ be a Cauchy-Riemann submanifold with Cauchy-Riemann dimension $p$ such that there is a submersion $\sigma: S \rightarrow Y$ of class $C^1$ whose fibers $\sigma^{-1}(y)$ are connected and that for all the points $z \in S$ we have 

$$
T_z S \cap i\, T_z S = T_z F_z,
$$
where $F_z = \sigma^{-1}(\sigma(z))$ is the fiber of the point $z$ and $T_zS$, $T_z F_z$ are the tangent spaces at $z$ corresponding to $S$ and $F_z$. Then, for every closed currents $T$ of bidimension $(p,p)$ and of order $0$ (resp. positive) with support in $S$, there exists a unique (resp. positive) Radon measure $\mu$ on $M$ such that 
$$
T = \int_{y \in Y} [\sigma^{-1}(y)]d\mu(y), 
$$
i.e. 
$$
\langle T, \psi \rangle = \int_{y \in Y}  \big(\int_{\sigma^{-1}(y)} \psi \big)d \mu(y) \, ,
$$
for $\psi \in \cD_{p,p}(X).$
\end{theorem}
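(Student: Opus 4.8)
The plan is to exploit the hypothesis $T_zS\cap iT_zS=T_zF_z$, which says that the holomorphic tangent space of $S$ is everywhere exactly the tangent to the fiber; in particular each fiber $F_y=\sigma^{-1}(y)$ is a $p$-dimensional complex submanifold, so the integration currents $[F_y]$ are well-defined closed positive currents of bidimension $(p,p)$. I would prove that $T$ is first \emph{vertical} (built only from the fiber volume direction), then disintegrate it along $\sigma$, the desired measure $\mu$ emerging as the transverse factor, and finally use closedness to pin each fiberwise piece to a multiple of $[F_y]$.

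First I would work in a local trivialization of $\sigma$, choosing holomorphic coordinates $(w,\zeta')=(w_1,\dots,w_p,\zeta_{p+1},\dots,\zeta_n)$ adapted to $S$ so that $T_zF_z=\mathrm{span}_{\CC}(\partial/\partial w_j)$ while the remaining real directions tangent to $S$ parametrize $Y$. Since $T$ has order $0$ it is represented by a $(p,p)$-vector field $\vec T$ against a measure $\nu$ on $S$. A linear-algebra observation drives the reduction: because $T_zS\cap iT_zS=T_zF_z$ has complex dimension exactly $p$, the only $(1,0)$-directions inside $T_zS\otimes\CC$ are $\partial/\partial w_1,\dots,\partial/\partial w_p$, so any $(p,p)$-vector tangent to $S$ must be a scalar multiple of the fiber volume vector $\bigwedge_{j=1}^{p}\big(\partial/\partial w_j\wedge\partial/\partial\bar w_j\big)$. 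The tangency of $\vec T$ to $S$ together with the vanishing of any residual component is supplied by the first support theorem (Theorem \ref{supportsmall}): a component of $T$ along a transverse holomorphic direction would be a normal $(p,p)$-current carried by a locus of Cauchy-Riemann dimension $<p$, hence zero. This is exactly where the sharp equality in the hypothesis is used.

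Having reduced $T$ to the vertical form $\vec T=f\,\bigwedge_{j=1}^{p}\big(\partial/\partial w_j\wedge\partial/\partial\bar w_j\big)$, encoded by the measure $f\,d\nu$ on $S$, I would disintegrate this measure along $\sigma$, writing it as $\int_Y\kappa_y\,d\mu_0(y)$ with $\mu_0$ its pushforward and $\kappa_y$ carried by $F_y$; correspondingly $T=\int_Y T_y\,d\mu_0(y)$ with each $T_y$ a vertical current supported on $F_y$. The decisive point is to descend $dT=0$ to the fibers: closedness passes to $\mu_0$-almost every $T_y$, and since $T_y$ is a current of maximal bidimension on the $p$-dimensional complex manifold $F_y$---that is, a distribution---being $d$-closed forces it to be locally constant, hence constant on the connected fiber $F_y$ by hypothesis, so $T_y=c(y)\,[F_y]$. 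Setting $d\mu(y)=c(y)\,d\mu_0(y)$ yields $T=\int_Y[F_y]\,d\mu(y)$; in the positive case $c(y)\ge 0$ a.e., so $\mu\ge 0$, and uniqueness follows by evaluating $T$ against forms of the type $(\chi\circ\sigma)\,\omega$ with $\omega$ of prescribed fiber-integral, which recovers $\int_Y\chi\,d\mu$.

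I expect the main obstacle to be precisely the combination of the verticality reduction with the descent of closedness. Making rigorous that no non-vertical component of $T$ survives requires the careful pairing of the support theorem with the structure of order-zero currents supported on $S$, and showing that $d$ commutes with the disintegration (so that almost every $T_y$ is genuinely closed) is the central analytic subtlety; it is exactly here that the connectedness of the fibers $\sigma^{-1}(y)$ is indispensable, since it is what upgrades a $d$-closed maximal-degree current on $F_y$ to an honest multiple of the integration current $[F_y]$.
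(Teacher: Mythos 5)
First, a remark on the benchmark: the paper does not prove this statement at all --- it is quoted verbatim as Theorem 2.13 of Demailly's book \cite{Dembook} --- so the only meaningful comparison is with Demailly's proof, whose outline (verticality of $T$, then closedness forcing constancy along the fibers, then disintegration into fiber integration currents) your plan does reproduce. The skeleton is right, but the two steps you yourself flag as the ``main obstacles'' are exactly where your proposed justifications break down. The first gap is the verticality reduction: you propose to kill the non-vertical part of $T$ by applying Theorem \ref{supportsmall} to ``a component of $T$ along a transverse holomorphic direction.'' This cannot work as stated. The coefficient measures of an order-zero current supported on $S$ are carried by $S$ itself, whose Cauchy-Riemann dimension is exactly $p$, not $<p$; moreover, the individual components are neither closed nor normal, so Theorem \ref{supportsmall} simply does not apply to them. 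What is actually needed is the tangency property of \emph{normal} currents supported on a $C^1$ submanifold: if $u$ is a $C^1$ function vanishing on $S$, then $uT=0$ and $u\,dT=0$ (both $T$ and $dT$ have order zero and support in $S$), hence $du\wedge T=d(uT)-u\,dT=0$. Applying this to defining functions of $S$ annihilates every component of $T$ involving a conormal direction; only then does your (correct) pointwise linear algebra --- that when $T_zS\cap iT_zS=T_zF_z$ the only $(p,p)$-vectors in $\Lambda^{2p}(T_zS\otimes\mathbb{C})$ are multiples of the fiber volume vector --- yield the vertical form of $T$. This is where normality (closedness plus order zero) is genuinely used, and it is not a consequence of Theorem \ref{supportsmall}.

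The second, and more serious, gap is the claim that ``closedness passes to $\mu_0$-almost every $T_y$.'' This is not a valid general principle, and you give no argument for it: from $\int_Y\langle dT_y,\varphi\rangle\,d\mu_0(y)=\pm\langle T,d\varphi\rangle=0$ one cannot conclude $dT_y=0$ for a.e.\ $y$, because the integrands are signed and may cancel across fibers. A concrete counterexample to the principle: on $\mathbb{R}^2$, the closed $1$-current $T=\partial/\partial x\otimes\lambda$ ($\lambda$ Lebesgue measure), disintegrated along the vertical lines $\{x\}\times\mathbb{R}$, is the integral of the currents $T_x=\partial/\partial x\otimes\lambda_{\{x\}\times\mathbb{R}}$, none of which is closed --- it is cancellation in $x$ that makes $dT=0$. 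In your situation the implication does hold, but only because of verticality, and proving it is precisely the missing content of the theorem; asserting that ``$d$ commutes with the disintegration'' is circular. The correct order of operations, which repairs your argument, is: establish verticality first, writing $T=\Lambda\wedge m$ with $m$ a measure on $S$ and $\Lambda$ the fiber $(p,p)$-vector; then test $dT=0$ against forms of the type $f\,dw_I\wedge d\bar w_J\wedge\cdots$ in adapted coordinates to conclude that the distributional derivatives of $m$ along the fiber directions $\partial/\partial w_j$, $\partial/\partial\bar w_j$ all vanish, so that $m$ is locally constant along the fibers; connectedness of the fibers then makes $m$ globally constant along each fiber, giving $m=(\hbox{fiber volume})\otimes\mu$ in local trivializations, i.e.\ $T=\int_Y[F_y]\,d\mu(y)$, with $\mu\geq 0$ when $T\geq 0$. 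Uniqueness by testing against $(\chi\circ\sigma)\,\omega$, as you propose, is fine. In short: disintegrate last, not first.
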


\section{Tropical cycles}\label{tropicalsection}
We start off by recalling a definition of tropical curves. Throughout this article a \textbf{rational} graph is a finite union of rays and segments in $\RR^n$ whose directions have rational coordinates. We call these rays and segments ($1$-cells) as edges and the endpoints ($0$-cells) as vertices. Hence a graph $\Gamma$ is the data $(\mathcal{C}_0(\Gamma), \mathcal{C}_1(\Gamma))$ of the $0$-cells and $1$-cells. A \textbf{primitive} vector is an integral vector such that the greatest common divisor of its components is $1\,.$ For each edge $e$ incident to a vertex $a$ there exists a primitive vector $v_e$ which has a representative with support on $e$ pointing away from $a$. Assume that every edge $e$ of $\Gamma$ is weighted by a non-zero integer $m_e$\,. We say that $\Gamma$ satisfies the \textbf{balancing condition} at a vertex $a$ if 
\begin{equation}\label{balancingcondition}
\sum_{\{a\}\prec\, e\, \in\, \mathcal{C}_1(\Gamma)}m_e v_e =0,
\end{equation} 
where the sum is taken over all the edges incident to the vertex $a.$

\begin{definition}\rm
A tropical curve in $\RR^n$ is a weighted rational graph $\Gamma= (\mathcal{C}_0(\Gamma), \mathcal{C}_1(\Gamma))$ which satisfies the balancing condition (\ref{balancingcondition}) at every vertex $a \in \mathcal{C}_0(\Gamma)$. 
\end{definition}

In the same spirit, one can define the \textbf{tropical $p$-cycles} in $\RR^n$. First, a \textbf{polyhedral complex} is a finite set of polyhedra which are joined to each other along common faces. A polyhedral complex is called \textbf{rational} if each polyhedron is the intersection of rational half spaces, \textit{i.e.} the half spaces which are given by the inequalities of the form 
$$
\langle \nu , x \rangle \geq a\,, \quad \textnormal{with given constants } \nu \in \ZZ^n\,, a\in \RR^n \,,\quad \forall x\in \RR^n\,.$$
 Such a complex is said to be \textbf{weighted} if a non-zero integral weight is assigned to each of its $p$-dimensional cells. Now assume that a $(p-1)$-dimensional face $W$ is adjacent to $p$ dimensional faces $P_1, \dots, P_s~$, $s \geq 2$, which have respective weights $m_1,\dots, m_s\,.$ Choose a point $a$ in $W$ and respective primitive vectors $v_j$, $(j= 1, \dots, s),$ emanating from $a$ inward each $P_j\,$.  One defines the balancing condition in higher dimensions to be that the sum
\begin{equation}\label{balancing-higher-dim}
\sum_{j=1}^{s} m_j \, v_j \text{~~ is parallel to~~} W\,.
\end{equation}

\begin{remark}\label{remark-higher-balancing}
\rm{
Assume that $W$ lies in an affine $(p-1)$-plane $H_W$ and that each $V_j$ lies in an affine $p$-plane $H_{V_j}$. One can find a $\ZZ$-basis $\{w_1,\dots,w_{p-1} \}$ for $W\cap \ZZ^n$ (the initial point for these vectors is considered to be a point in $W$) and extend it to $\{w_1, \dots, w_{p-1}, v_j \}$ for each $j=1,\dots, s,$ such that $\{w_1, \dots, w_{p-1}, v_j \}$ is a $\ZZ$-basis for $H_{V_j} \cap \ZZ^n$ and the balancing condition (\ref{balancing-higher-dim}) is satisfied by the $v_j$. This simply implies that  $\sum_{j=1}^{s} m_j \, v_j$ lies in $H_W\,;$ in other words every $p\times p$ minor of the $n\times p$ matrix with columns vectors $\big(w_1, \dots, w_{p-1}, \sum_{j=1}^{s} m_j \, v_j\big)$ vanishes. 
}
\end{remark}

\begin{definition}[\cite{Mikh-trop-appli}, \cite{firststeps}, \cite{shaw}]\label{trop-cyc-def}\rm A weighted rational polyhedral complex of pure dimension $p$ is called a tropical $p$-cycle if the balancing condition (\ref{balancing-higher-dim}) is satisfied at every codimension $1$ face. Such a cycle is called effective if every weight is a positive integer.  
\end{definition}
Therefore, tropical $1$-cycles are the tropical graphs. Also, a tropical $(n-1)$-cycle in $\RR^n$ is called a tropical hypersurface. To define the effective tropical cycles of codimension $1$, one might use \textbf{tropical polynomials} which are defined as follows.
\begin{definition}\rm
A tropical Laurent polynomial $p:\RR^n\rightarrow \RR$ is a function of the form
\begin{equation}
(x_1,\dots,x_n)\mapsto \max\big\{c_{\alpha_1,\dots,\alpha_n}+ \alpha_1 x_1+\dots+\alpha_n x_n \big\},
\end{equation}
over a finite set of indices, in which $\alpha_i$, $i=1,\dots,n$ are integer numbers and $c_{\alpha_1,\dots,\alpha_n}$ are real numbers; we might abbreviate the notation to 
\begin{equation}
x \mapsto \max_{\alpha}\big\{c_{\alpha}+\langle\alpha, x \rangle \big\},
\end{equation}
where $\alpha=(\alpha_1,\dots,\alpha_n)$ and $\langle \ , \, \rangle$  is the usual inner product in $\RR^n$. 
\end{definition}
To justify the above definition one considers the \textbf{tropical semi-field} $(\mathbb{T},\oplus,\odot ).$ Where $\mathbb{T}= \RR \cup \{-\infty \}$, with the operations $a\oplus b = \max\{a,b \}$ and $a \odot b = a+b$ for $a,b \in \mathbb{T}.$ Then the usual definition of a  Laurent polynomial carried with tropical operations 
instead of the usual ones leads to that of a tropical Laurent polynomial in which the $c_{\alpha}$'s are coefficients and $\alpha_i$'s the respective degrees. If all $\alpha_i \in \mathbb{Z}^{\geq 0} ,$ the tropical Laurent polynomial $p$ is said to be a {tropical polynomial}. The \textbf{tropical hypersurface} corresponding to a given tropical Laurent polynomial $p$ is denoted by $V_{\mathbb{T}}(p)$ and defined as the set below 
$$
V_{\mathbb{T}}(p)\! =\! \big\{ x\in \RR^n : \textrm{values of at least two monomials in $p$ coincide and maximize at $x$} \big\},
$$
which is basically the corner locus of $x\mapsto p(x)$~: the set of points over which the graph of the piece-wise linear convex function $p$ is broken. This set has a rational polyhedral complex structure of pure dimension $n-1$. However, we still need to assign the weights to each of the polyhedra to make it an honest tropical cycle~: suppose $F$ is a $(n-1)$-dimensional cell where the monomials $c_{\alpha_j}+ \langle \alpha_j ,x \rangle$, $\alpha_j \in \mathbb{Z}^n$, $j=1,\dots,s$ are equal and maximized, then for dimensional reasons, the slopes $\alpha_j$ lie in a line in $\mathbb{Z}^n$~; the weight $w(F)$ assigned to $F$ is the maximal lattice length of this line segment connecting the points representing these slopes (the lattice length of a line segment being the number of lattice points on this line minus $1$)~; one can check that with such weights $V_{\mathbb{T}}(p)$ satisfies the balancing condition at each facet.  
\vskip 2mm

One also defines the \textbf{tropical projective space} in the following way. 
\begin{definition} \rm
The tropical projective space $\mathbb{TP}^n= \mathbb{T}^n \backslash \{(-\infty)^{n}\} \slash \sim$ where $\sim $ is the relation defined by 
$$
(x_0,\dots,x_n)\sim (\lambda \odot x_0,\dots,\lambda \odot x_n), \quad \lambda \in \mathbb{T}^* =\RR \, .
$$ 
\end{definition}
Now one considers the tropical cycles in $\mathbb{TP}^n$, which are locally the tropical cycles in open subsets of $\RR^n.$ Accordingly, the \textbf{homogeneous} tropical polynomials and the associated tropical hypersurfaces in $\mathbb{TP}^n$. The theorem below is now classic in tropical geometry.
\begin{theorem}[\cite{Mikh-pants}, \cite{firststeps} for $n=2$]\label{tropicalhyp}
Every effective tropical hypersurfaces in $\RR^n$ (resp. $\mathbb{TP}^n$) is of the form $V_{\mathbb{T}}(p)$, for a tropical (resp. homogeneous tropical) polynomial $p$.
\end{theorem}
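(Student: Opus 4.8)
The plan is to reconstruct a convex piecewise-linear function whose corner locus, counted with multiplicities, is the given hypersurface; a tropical polynomial is precisely such a function with integer slopes, so producing one proves the theorem. Write $V$ for the given effective tropical hypersurface in $\RR^n$ and let $R_1,\dots,R_N$ denote the closures of the connected components of $\RR^n\setminus V$, each a full-dimensional polyhedron. On each region $R_i$ I want to produce an affine function $\ell_i(x)=c_i+\langle\alpha_i,x\rangle$ with $\alpha_i\in\ZZ^n$, and then set $p:=\max_i\ell_i$. Two regions $R_i,R_j$ are adjacent exactly when they share an $(n-1)$-cell $F$ of $V$; if $u_F$ is the primitive integer normal to $F$ (oriented from $R_i$ toward $R_j$) and $w(F)$ its weight, the slopes must satisfy $\alpha_j-\alpha_i=w(F)\,u_F$, since for a tropical polynomial the weight of a facet is exactly the lattice length of the gradient jump across it (as recorded in the paper's own definition of the weights of $V_{\mathbb{T}}(p)$).

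First I would solve for the slopes. Viewing the assignment $i\mapsto\alpha_i$ as a potential on the dual graph (regions as vertices, facets as edges carrying prescribed differences $w(F)u_F$), solvability is equivalent to the vanishing of the accumulated jump around every closed loop. Because $\RR^n$ is contractible, every loop is generated by small loops encircling the codimension-two strata of $V$, i.e. its $(n-2)$-faces $W$. Around such a $W$ I would pass to the $2$-plane normal to $W$, in which the adjacent facets $P_1,\dots,P_s$ appear as rays in cyclic order; the accumulated gradient jump is $\sum_j m_j\,u_{P_j}$, and a right-angle rotation of this plane converts $\sum_j m_j\,u_{P_j}=0$ into $\sum_j m_j v_j\parallel W$, which is exactly the balancing condition (\ref{balancing-higher-dim}) at $W$. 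Thus balancing yields local consistency, contractibility upgrades it to global consistency, and I obtain integer slopes $\alpha_i$, unique up to a common additive constant. Fixing one $\alpha_i$, the same contractibility argument (now integrating the globally closed, hence exact, ``differential'' of the sought function) produces the constants $c_i$ making $\ell_i=\ell_j$ on each shared facet.

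It then remains to check that $p=\max_i\ell_i$ genuinely has corner locus $V$ with the prescribed weights. Here effectiveness enters: positivity of every weight $w(F)$ forces the gradient to jump in the convex direction across each facet, so the glued function $f$ (equal to $\ell_i$ on $R_i$) is convex across every $(n-1)$-cell. A piecewise-linear function convex across each of its codimension-one faces is globally convex, so $f$ is convex; consequently each affine piece is a supporting function, $f\ge\ell_i$ everywhere, and since $f=\ell_i$ on $R_i$ we conclude $f=\max_i\ell_i=p$. The linearity regions of $p$ are then exactly the $R_i$, its corner locus is $V$, and the lattice length of each gradient jump reproduces $w(F)$ by construction. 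Adding a fixed linear form $\langle\beta,x\rangle$ with $\beta$ having large positive entries shifts all slopes into $\ZZ_{\ge0}^n$ without moving the corner locus, turning the tropical Laurent polynomial into an honest tropical polynomial; the $\mathbb{TP}^n$ statement follows by running the same reconstruction on the standard affine charts and homogenizing the locally consistent data into a single homogeneous tropical polynomial.

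I expect the principal obstacle to be the passage from local to global consistency of the slope assignment, namely verifying that the dual one-cycles are generated by the loops around the codimension-two faces, so that the balancing condition alone guarantees a single-valued potential; the second delicate point is confirming that \emph{effectiveness} produces genuine global convexity rather than merely a locally convex, potentially self-overlapping, piecewise-linear picture.
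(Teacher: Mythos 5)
The paper contains no proof of Theorem \ref{tropicalhyp} for you to be compared against: it is quoted as a classical result, with the argument deferred to \cite{Mikh-pants} (and to \cite{firststeps} for $n=2$). Judged on its own merits, your reconstruction is precisely the standard argument behind that classical theorem, and its outline is sound: prescribe the gradient jump $w(F)\,u_F$ across each $(n-1)$-cell, integrate this data over the dual graph of the complementary regions, observe that the obstruction lives in $H_1$ of the complement of the $(n-2)$-skeleton, which is generated by meridian loops around the $(n-2)$-cells, so that the balancing condition (vanishing of each local monodromy) yields well-defined integer slopes and then the constants; effectiveness gives convexity across every facet, local convexity of a piecewise-linear function upgrades to global convexity along generic line segments, and convexity forces the glued function to equal $\max_i \ell_i$, whose corner locus, with lattice-length weights, is exactly $V$. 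The final shift by a linear form to reach nonnegative exponents is also correct.

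Two steps deserve more care than you give them. First, the ``right-angle rotation'': a Euclidean $90^\circ$ rotation of the plane normal to $W$ does not in general preserve the relevant lattices (for $W_0=\RR(1,1,0)\subset\RR^3$, the lattice $\ZZ^3\cap W_0^{\perp}$ is generated by $(1,-1,0)$ and $(0,0,1)$ and is not rotation-invariant). The correct mechanism is the canonical rank-two duality between the quotient lattice $\ZZ^n/(\ZZ^n\cap W_0)$, where the projected facet directions $\bar v_j$ live, and the lattice $\ZZ^n\cap W_0^{\perp}$ of normal vectors, where the $u_{P_j}$ live; this duality sends primitive vectors to primitive vectors, and since you only test the vanishing of a sum, the covolume factor by which it differs from the Euclidean rotation is harmless. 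However, the identification of $\sum_j m_j\bar v_j=0$ with $\sum_j m_j u_{P_j}=0$ (signs fixed by a coherent co-orientation) holds only if each $\bar v_j$ is \emph{primitive in the quotient lattice}; primitivity of $v_j$ in $\ZZ^n$ does not imply this, and in fact the balancing condition itself is only well posed under the convention of Remark \ref{remark-higher-balancing}, namely that $v_j$ completes a lattice basis of $W\cap\ZZ^n$ to one of $H_{V_j}\cap\ZZ^n$. Read with that convention, your equivalence is exact. Second, the $\mathbb{TP}^n$ half is only gestured at: you still need to check that the corner locus of the homogenization is the closure of the affine corner locus and acquires no extra cells inside the boundary strata of $\mathbb{TP}^n$. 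Neither point is a fatal gap, but both need to be argued for the proof to be complete.
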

\vskip 2mm
Another important notion to be recalled here is the notion of \textbf{amoeba}. Consider 
\begin{equation}
\Log_t: (\CC^*)^n \rightarrow \RR^n, \quad (z_1,\dots\, ,z_n) \mapsto (\frac{\log|z_1|}{\log t},\dots \, , \frac{\log|z_n|}{\log t})
\end{equation}
(when $t=\exp(1)$ we drop the subscript). 
 
\begin{definition}[\cite{Gelfand}]\rm
The amoeba of an algebraic subvariety $V \subset (\CC^*)^n$, denoted by $\mathcal{A}_V$, is the set $\Log(V)\subset \RR^n.$
\end{definition}

Given a family $(X_t)_{t \in \RR_{+}}$ of algebraic subvarieties of $(\CC^*)^n$, one considers the family of amoebas $\Log_t(X_t)\subset \RR^n$. Assume that $\Log_t(X_t)$, as $t$ goes to infinity, converges (with respect to the Hausdorff metrics on compact sets of $\RR^n$) to a limit set $X$~; then $X$ inherits a structure of a tropical cycle, {\it i.e.} as a set it is a rational polyhedral complex, which can moreover be equipped with positive integer weights to become balanced (see \cite{speyer thesis}). Therefore, a natural question arises~: which effective tropical cycles can be realized as Hausdorff limit of amoebas of a family of algebraic subvarieties of $(\CC^*)^n~?$ We are concerned with a modification of this problem in Section \ref{approximationsection} and we relate these approximations to approximations of tropical  extremal currents by analytic cycles. 


\section{Tropical currents}\label{tropicalcurrents}

Assume $V_{\mathbb{T}}$ is a tropical $p$-cycle. We define a current supported on $\Log^{-1}(V_{\mathbb{T}})$ which inherits the respective weights of $V_{\mathbb{T}}$ and then determine whether this current is strongly extremal. We introduce the following abridged notations. 
\begin{notation}
{For a complex number $\zeta $ and an integral vector $\nu =(\nu_1,\dots,\nu_m)$ ($m\in \mathbb N^*$) we set 
$$
\zeta^{\nu} = (\zeta^{\nu_1},\dots,\zeta^{\nu_m}).
$$
Moreover for two vectors $\nu = (\nu_1,\dots,\nu_m), \, \nu' = (\nu'_1,\dots,\nu'_m)$
$$
\nu \star \nu' := (\nu_1 \, \nu'_1, \dots, \nu_m \, \nu'_m).
$$}
\end{notation}

\vskip 2mm

Recall that a rational $p$-plane in $\RR^n$ means that is given by the equations 
$$
\langle \nu_i , x \rangle = 0, \quad \nu_i \in \ZZ^n\,, i\, =\, 1,\dots, n-p \,.  
$$
\begin{lemma}\label{section4-lemma1} 
Suppose $H$ is a rational $p$-plane in $\RR^n$ (which passes the origin), ($1\leq p\leq n$). Let $B= \big(w_1, \dots, w_p \big)$ and $B'= \big(w'_1, \dots, w'_p \big)$ be two $\ZZ$-basis for $H\, \cap \,\ZZ^n$. Define for any 
$\gamma\in (\mathbb S^1)^n$, the two subsets of $(\CC^*)^n$~: 
$$
Z_B^\gamma :=\{\tau_1^{w_1} \star \dots \star \tau_p^{w_p} \star \gamma =
 \iota^{w}_\gamma  (\tau)~;  \tau_1,\dots, \tau_p \in \CC^* \}
$$
and 
$$
Z_{B'}^\gamma = \{\tau_1^{w'_1} \star \dots \star \tau_p^{w'_p} \star \gamma =\iota^{w'}_\gamma(\tau)~;  \tau_1,\dots, \tau_p \in \CC^* \}.
$$
Then, the integration currents 
$$
T= [Z_B^\gamma]:=(\iota^w_\gamma)_* ([(\CC^*)^p]),\quad 
T'=[Z_{B'}^\gamma]:=(\iota^{w'}_\gamma)_* [(\CC^*)^p]
$$
coincide. 
\end{lemma}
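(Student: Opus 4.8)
The plan is to show that the two parametrized sets $Z_B^\gamma$ and $Z_{B'}^\gamma$ are in fact the same subset of $(\CC^*)^n$, and that the two parametrizations differ by an automorphism of $(\CC^*)^p$ of degree $\pm 1$, so that the pushforward integration currents agree. First I would observe that since $B=(w_1,\dots,w_p)$ and $B'=(w'_1,\dots,w'_p)$ are both $\ZZ$-bases of the same lattice $H\cap\ZZ^n$, there exists a matrix $A=(a_{ij})\in \mathbb{GL}_p(\ZZ)$ (hence $\det A=\pm 1$) with $w'_j=\sum_{i=1}^p a_{ij}\,w_i$. The key computational step is to translate this change-of-basis relation into a relation between the maps $\iota^w_\gamma$ and $\iota^{w'}_\gamma$. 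Using the notation $\zeta^\nu$ and $\star$, a direct expansion shows that $\tau_1^{w'_1}\star\dots\star\tau_p^{w'_p}$ equals $\sigma_1^{w_1}\star\dots\star\sigma_p^{w_p}$ after the monomial substitution $\sigma_i=\prod_{j=1}^p \tau_j^{a_{ij}}$, because the exponent of each coordinate on both sides is a $\ZZ$-linear combination of the $w_i$ dictated by $A$. In other words, writing $\Phi_A:(\CC^*)^p\to(\CC^*)^p$ for the monomial map $\tau\mapsto\sigma$, I get $\iota^{w'}_\gamma=\iota^w_\gamma\circ\Phi_A$.

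Next I would record that $\Phi_A$ is an automorphism of the algebraic torus $(\CC^*)^p$: since $A\in\mathbb{GL}_p(\ZZ)$ its inverse $A^{-1}$ also has integer entries, so $\Phi_{A^{-1}}$ is a well-defined regular inverse to $\Phi_A$. In particular $\Phi_A$ is a biholomorphism of $(\CC^*)^p$ onto itself, which immediately gives the set-theoretic equality $Z_{B'}^\gamma=\iota^{w'}_\gamma((\CC^*)^p)=\iota^w_\gamma(\Phi_A((\CC^*)^p))=\iota^w_\gamma((\CC^*)^p)=Z_B^\gamma$. So both currents are supported on one and the same set, and both maps $\iota^w_\gamma,\iota^{w'}_\gamma$ are injective immersions (each is $\tau\mapsto\tau^{w_1}_1\star\cdots$ composed with translation by $\gamma$, and injectivity follows from $B$, resp. $B'$, being a basis), so $T$ and $T'$ are genuine integration currents along this common smooth $p$-dimensional submanifold.

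Finally I would compare the two pushforwards. For a test form $\psi\in\cD_{p,p}((\CC^*)^n)$ the defining identity reads $\langle T',\psi\rangle=\int_{(\CC^*)^p}(\iota^{w'}_\gamma)^*\psi=\int_{(\CC^*)^p}\Phi_A^*\big((\iota^w_\gamma)^*\psi\big)$, and the change-of-variables formula for the diffeomorphism $\Phi_A$ turns the right-hand side into $\int_{(\CC^*)^p}(\iota^w_\gamma)^*\psi=\langle T,\psi\rangle$, provided $\Phi_A$ is orientation-preserving. The main obstacle, and the only point requiring care, is precisely this orientation bookkeeping: a priori $\det A=\pm 1$, and an orientation-reversing $\Phi_A$ would introduce a sign. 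The resolution is that $(\CC^*)^p$ carries its canonical complex orientation, and a monomial map $\Phi_A$ associated to an integer matrix $A$ is holomorphic; every biholomorphism of a complex manifold preserves the complex orientation, so the real Jacobian of $\Phi_A$ is $|{\det}_\CC D\Phi_A|^2>0$ pointwise regardless of the sign of $\det A$. Hence no sign appears and $\langle T',\psi\rangle=\langle T,\psi\rangle$ for all test forms, which proves $T=T'$.
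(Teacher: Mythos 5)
Your proof is correct and is essentially the paper's own argument: both identify the change-of-basis matrix in $GL(p,\ZZ)$ and use the corresponding invertible monoidal change of coordinates of $(\CC^*)^p$ to intertwine the two parametrizations, whence the pushforward currents agree. The only difference is that you spell out the orientation bookkeeping (the monomial automorphism is holomorphic, hence orientation-preserving, so $\det = \pm 1$ causes no sign), a point the paper leaves implicit in the phrase ``invertible monoidal change of coordinates.''
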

\begin{proof}
The analytic sets $Z_B^\gamma$ and $Z_{B'}^\gamma$ are equal. 
We prove that they are analytically isomorphic. 
Consider $B, \, B'$ as matrices with the given vectors as columns. There exists $C \in GL(p, \ZZ),$ such that $B\,C=B'$. Set 
$$(\tau'_1,\dots , \tau'_p) = (\tau_1^{c_1} \star \dots \star \tau_p^{c_p} )$$
 where $c_1,\dots, c_p$ are the columns of $C$. This is an invertible monoidal change of coordinates, and it is easy to see that 
$$
(\tau'_1)^{w_1} \star \dots \star (\tau'_p)^{w_p}= \tau_1^{w'_1}\star \dots \star  \tau_p^{w'_p}\,,
$$
which concludes the proof. 
\end{proof}

\begin{remark}\label{remarktoricset}
\textnormal{
The sets of the form $Z_B^\gamma$, when $\gamma =1$, are referred to as \textbf{toric} sets \cite{sturm-toricset}. 
They can be understood as zero locus of binomial ideals 
in $\C^n$. In fact, if $\xi_1,..., \xi_M$ is a set of primitive generators for 
${\rm Ker}\, B^t \cap \mathbb Z^n$ , such that each $\xi_\ell$ 
is split as $\xi_\ell^+-\xi_\ell^-$, where $\xi^+_\ell= (\xi_{\ell,1}^+,\dots, \xi_{\ell,n}^+)$ and $\xi^- = (\xi_{\ell,1}^-,\dots, \xi_{\ell,n}^-)$ have non-negative components in $\mathbb Z^n$ and disjoint supports, then the 
current $[Z_B^{\gamma}]_{\rm red}$ is given by 
$$
\mathds{1}_{Z_B^\gamma} \cdot \Big[dd^c \log \Big(
\sum\limits_{\ell=1}^M \Big|\prod_{j=1}^n \zeta_j^{\xi^+_{\ell,j}} - 
\prod\limits_{j=1}^n \gamma_j^{\xi_{\ell,j}} \zeta_j^{\xi^-_{\ell,j}}\Big|^2\Big)\Big]^{n-p}
$$
by King's formula (see \cite{Dembook}, page 181).}
\end{remark}
As before let $H\,= \, H_0$ be a rational $p$-plane (passing through $0$). One can find a $\ZZ$-basis for the lattice $L_H:=H \cap \ZZ^n$, $B=\big(w_1, \dots, w_p \big)$.  
Moreover, $B$ can be completed as $D=\big(w_1,\dots ,w_p, u_1,\dots, u_{n-p} \big)$ which stands, as a set, 
as a $\mathbb{Z}$-basis for $\mathbb{Z}^n$ (if $D$ denotes the matrix of such vectors as columns, one has $\det D=\pm 1$). Note that, if $D$ and $D'$ are two such completions of $B$, one has 
$$
D' = D \cdot \begin{bmatrix} 
{\rm Id}_p & 0 \\
K & \widetilde C 
\end{bmatrix} 
$$
where $K$ and $\widetilde C$ are respectively $(n-p,p)$ and $(n-p,n-p)$ matrices 
with integer coefficients and $\det \widetilde C =(\det D)^{-1} \times \det D' = \pm 1$.  
Fix for the moment a basis $B$ and consider such a completion $D_B=D$ of $B$. 
Consider, for each $(\theta_{p+1},...,\theta_n) \in (\mathbb R/\mathbb Z)^{n-p}$, the set 
\begin{equation}\label{defDelta} 
\Delta_{H,D} (\theta) := 
\{\tau_1^{w_1} \star \dots \star \tau_p^{w_p} 
\star e^{2i\pi \theta_{p+1} u_1} \star \dots \star e^{2i\pi \theta_n u_{n-p}}\,;\, 
\tau \in (\mathbb C^*)^p\}. 
\end{equation} 
This is a $p$-dimensional analytic subset of $(\mathbb C^*)^n$ which is a toric set 
of the form $Z_B^{\gamma_u}$. 
In addition, one can parametrize $S_H := \Log^{-1} (H)$ in the following way : 
$$
S_H = \Big\{\tau_1^{w_1} 
\star \dots \star \tau_p^{w_p} 
\star e^{2\pi \theta_{p+1} u_1} \star \dots \star e^{2i\pi \theta_n u_{n-p}}~; 
\tau \in (\mathbb C^*)^p,\ (\theta_{p+1},...,\theta_n) 
\in (\mathbb R/\mathbb Z)^{n-p}\Big\}. 
$$
Therefore each $\Delta_{H,D}(\theta_{p+1}, \dots, \theta_n)$ can be considered as the fiber over 
$(\theta_{p+1},...,\theta_n)$ of the submersion $\sigma_{H,D}:$ 
\begin{gather*}\label{foliation} 
\tau_1^{w_1} \star \dots\star \tau_{p}^{w_{p}}  \star 
e^{2i\pi \theta_{p+1} u_1}\star \dots \star  e^{2i\pi \theta_n u_{n-p}}   \in S_H 
\\
\xymatrix{\ar@{|->}[d]^{\sigma_{H,D}}& \\ &}\\
(\theta_{p+1}\, , \dots ,\theta_n) \in (\RR\slash \ZZ)^{n-p}\,.
\end{gather*}
We define the positive $(p,p)$ current $T_{H,D}$  
\begin{equation}\label{definitionofthecurrent}
T_{H,D} = \int_{(\theta_{p+1},...,\theta_n)
\in (\mathbb R/\mathbb Z)^{n-p}} \big[\Delta_{H,D} (\theta_{p+1}, \dots,\theta_{n})\big] d\theta_{p+1} \dots d\theta_n\,.
\end{equation}
If one considers two completions $D= (B,U)$ and $D'=(B,U')$ of $B$, though the fibers 
$\Delta_{H,D}$ do vary 
when $D$ is changed into $D'$ (as well as the integration currents $[\Delta_{H,D}]$)), the sum $T_{H,D}$ does not since $U' = U\cdot \widetilde C$ (where $\widetilde C \in GL(n-p,\mathbb Z)$) 
and the Lebesgue measure on $(\mathbb R/\mathbb Z)^{n-p}$ is preserved 
under the action of monoidal automorphisms of the torus $(\mathbb S^1)^{n-p}$ 
whose matrix $\widetilde C$ of exponents belongs to $GL(n-p,\mathbb Z)$. 
As a result, the current $T_{H,D}$ depends only on $B$ and one can write 
$T_{H,D}= T_{H,D_B} = T_{H}^{[B]}$ for any completion $D_B$ of $B$.
On the other hand, if $U$ is fixed, it follows from 
Lemma \ref{section4-lemma1} that, if one considers 
$D=(B,U)$ and $D'=(B',U)$, where $B$ and $B'$ are two lattice basis of 
$L_H$, then $[\Delta_{H,D}(\theta)]=[\Delta_{H,D'}(\theta)]$ for 
any $\theta = (\theta_{p+1},...,\theta_n) \in (\mathbb R/\mathbb Z)^{n-p}$, hence 
$T_{H,D}= T_{H,D'}$. Accordingly, $T_{H,D_B}=T_H^{[B]}$ is in fact 
independent of $B$, and one defines in such a way a positive current 
$$
T_H = T_H^{[B]} = T_{H, \{B,U_B\}}= T_{H,D_B}
$$
which is independent of the choice of the lattice basis $B$ for $L_H = H\cap \ZZ^n$ as well as that of its 
completion $D=D_B=(B,U_B)$ as a $\mathbb Z$-basis of $\mathbb Z^n$. 
The support of $T_H$ (considered as a $(p,p)$-dimensional positive current in 
$(\mathbb C^*)^n$ is clearly $\Log^{-1}(H)=S_H$. 
\vskip 2mm
Now assume that $H_a\subset \RR^n$ is a rational affine $p$-plane obtained by translation of a rational $p$-plane $H = H_0$ via $a=(a_1,\dots, a_n) \in \RR^n\,.$ Define the linear map 
\begin{align*}
L_a: \CC^n &\rightarrow \CC^n, \\z=(z_1,\dots, z_n) &\mapsto \exp(-a)\star z = (\exp(-a_1) z_1, \dots , \exp(-a_n) z_n).
\end{align*}
Set  
$$
T_{H_a} := L_a^*(T_{H})=\int_{(\theta_{p+1},\dots, \theta_{n})\in (\RR\slash \ZZ)^{n-p}} [L_a^{-1}(\Delta_{H,D}(\theta_{p+1},\dots , \theta_{n}))]d\theta_{p+1}\dots d\theta_{n}.
$$
Accordingly, 
$$
S_{H_a} = \exp(a) \star S_{H}, 
$$
and 
$$
\Delta_{H_a, D} = \exp(a) \star \Delta_{H, D}\,.
$$
It is easily seen that the definition of $T_{H_a}$ is independent of the choice of the base point $a \in H_a\,,$ which makes us ready to propose the following definition.  
\begin{definition}\rm
Assume $\mathcal{P}$ is a weighted rational polyhedral complex of pure dimension $p$. Let $\mathcal{C}_p(\mathcal{P})$ be the family of all $p$ dimensional cells of $\mathcal{P}.$ Each $P \in \mathcal{C}_p(\mathcal{P})$ is equipped with a non-zero integral weight $m_P$ and lies in an affine $p$-plane $H_{a_P}$ which passes through a chosen base point $a_P\in P\,.$ 
Let 
$$
\mathscr T_P \, = \mathds{1}_{\Log^{-1}(\textrm{int } P)} \,\, T_{H_{a_P}} \, 
$$
be the restriction of the positive $(p,p)$-dimensional current $T_{H_{a_P}}$ 
(supported by $\Log^{-1} (H_{a_P})$)  
to $\Log^{-1}(\textrm{int } P) \subset 
\Log^{-1} (H_{a_P}) \subset (\CC^*)^n$. Here ${\rm int} (P)$ denotes the relative interior of $P$ in the affine $p$-plane 
$H_{a_P}\,.$ This definition is independent of the chosen base point $a_P$.
We define 
$$
\mathscr T_n^p (\mathcal{P}) = \sum_{P\, \in \, \mathcal{C}_p(\mathcal{P})}m_P\, \mathscr T_P~.
$$
\end{definition}

Obviously, if $\mathcal{P}$ is positively weighted, then $\mathscr T_n^p (\mathcal{P})$ is a positive current. In this article we are interested in the case where $\mathcal{P}$ is a tropical cycle $V_{\mathbb T}$. In such case, we call $\mathscr T_n^p (V_{\mathbb T})$ the \textbf{tropical current} associated to $\mathcal{P}=V_{\mathbb T}$. 
\vskip 2mm
Before stating the main theorem of this article, we introduce the following terminology.

\begin{definition}\label{subind} \rm
A set of vectors is said to be linearly sub-independent over a field  $\mathbb{K}$ if each proper subset of this set is a set of linearly independent vectors.
\end{definition}

\begin{remark}\label{remarkonsubindependency}
{\rm Suppose that the set of vectors $\{v_1,\dots,v_s \}$ is linearly sub-independent over $\mathbb{R}$ and 
there exist $a_j, b_j \in \mathbb{C}$, $j=1,\dots,s$~ such that $\sum_{j=0}^{s} a_j v_j =\sum_{j=0}^{s} b_j v_j =0.$ Then there exists a $\rho \in \mathbb{C} $ such that $a_j= \rho \, b_j$ for $j=1,\dots,s$.} 
\end{remark}

\begin{definition}\rm
A tropical $p$-cycle $V_{\mathbb{T}}\subset \RR^n$ is said to be strongly extremal if
\begin{enumerate}
 \item $V_{\mathbb{T}}$ is connected in codimension $1$~; 
 \item each $p-1$ dimensional face (facet) $W$ of $V_{\mathbb{T}}$ is adjacent to exactly $n-p+2$ polyhedra (cells) of dimension $p$~; 
 \item for each facet of $W$ of $V_{\mathbb T}$, let $\{v_1,\dots,v_{n-p+2}\}$ be the primitive vectors, one in each of the $n-p+2$ polyhedra above, that make the balancing condition hold. Then, the set of their projections along $W$, $\{h_W(v_1),\dots, h_W(v_{n-p+2})\},$ forms a sub-independent set. 
\end{enumerate}
\end{definition}

For instance, when $V_{\mathbb{T}}\subset \RR^n$ is a tropical $1$-cycle, then the strong extremality conditions 
means that the graph is $(n+1)$-valent at every vertex and the corresponding $(n+1)$- primitive vectors span $\RR^n$. It is also clear that for tropical hypersurfaces in the number $n-p+2$ is exactly $n-(n-1)+2=3$.

\begin{theorem}\label{maintheohigherdim}
If $V_{\mathbb{T}} \subset \RR^n$ is a tropical $p$-cycle, then the normal and $(p,p)$-dimensional tropical current $\mathscr T_n^p (V_{\mathbb{T}})$ is closed. If moreover $V_{\mathbb{T}}$ is strongly extremal, then $\Tnp(V_{\T})$ is strongly extremal in $\mathcal{D}'_{p,p}((\CC^*)^n)$. 
\end{theorem}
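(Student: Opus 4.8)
The plan is to derive both assertions from a single computation on the torus transverse to a facet, after fibering out the directions of the facet. First I would record that each $T_{H_a}$ is closed: it is the average over $\theta\in(\RR/\ZZ)^{n-p}$ of the integration currents $[\Delta_{H,D}(\theta)]$ along the complex $p$-dimensional leaves, each leaf being a closed analytic cycle, and $d$ commutes with the integration over $\theta$. Consequently $d\mathscr{T}_n^p(V_{\mathbb{T}})=\sum_P m_P\, d\big(\mathds{1}_{\Log^{-1}(\mathrm{int}\,P)}\,T_{H_{a_P}}\big)$ is supported on $\bigcup_W \Log^{-1}(W)$, the union being over the facets $W$, so it suffices to analyse the boundary near each $\Log^{-1}(W)$.

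Fix a facet $W$ adjacent to the $p$-cells $P_1,\dots,P_s$ with weights $m_j$ and primitive vectors $v_j$, and put $\bar v_j=h_W(v_j)$. Taking a $\ZZ$-basis $w_1,\dots,w_{p-1}$ of $W\cap\ZZ^n$ and completing it by transverse vectors $u_1,\dots,u_{n-p+1}$ to a basis of $\ZZ^n$, I would pass to monoidal coordinates $(\sigma,\eta)\in(\CC^*)^{p-1}\times(\CC^*)^{n-p+1}$, in which $\Log^{-1}(W)=(\CC^*)^{p-1}\times(\mathbb{S}^1)^{n-p+1}$ and the leaf of each $P_j$ factors as a common piece in the $\sigma$-variables times the monomial curve $\{\eta=\tau^{\bar v_j}\star\gamma\}$. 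Since the relative interior of $P_j$ is open in the $W$-directions, $d\mathscr{T}_n^p$ has no boundary in the $\sigma$-variables, and the $\sigma$-factor is a fixed closed current on $(\CC^*)^{p-1}$ common to all cells; it accompanies the computation only as a spectator. This is the fibering over $W$ that reduces the single-facet case of general $p$ to the case $p=1$ on the transverse $(\CC^*)^{n-p+1}$, whose fan consists of the rays $\bar v_1,\dots,\bar v_s$ with $\sum_j m_j\bar v_j=h_W\big(\sum_j m_j v_j\big)=0$ by the balancing condition (\ref{balancing-higher-dim}).

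The core is the analysis on the central torus $(\mathbb{S}^1)^{n-p+1}$. Near a generic point of each $\Log^{-1}(\mathrm{int}\,P_j)$ the support is the smooth Cauchy--Riemann manifold $S_{H_{a_{P_j}}}$, so Theorem \ref{supportdecom} represents any closed order-zero current $\tilde T$ with this support as $\int [\Delta_j(\theta)]\,d\mu_j(\theta)$ for a unique complex Radon measure $\mu_j$ on the leaf torus $(\mathbb{S}^1)^{n-p}$, with $\mathscr{T}_n^p(V_{\mathbb{T}})$ corresponding to $\mu_j=m_j\,\lambda$ ($\lambda$ the Haar measure). Writing the boundary of each cut leaf as integration along the $\bar v_j$-circles weighted by $\mu_j$ and expanding in the characters of the central torus, the boundary $d\tilde T$ near $\Log^{-1}(W)$ is captured, mode by mode, by the vectors
\[
R_\kappa:=\sum_{j\,:\,\langle\kappa,\bar v_j\rangle=0}\widehat{\mu_j}(\kappa)\,\bar v_j\in\CC^{n-p+1}\qquad(\kappa\in\ZZ^{n-p+1}),
\]
and $d\tilde T=0$ near $\Log^{-1}(W)$ is equivalent to $R_\kappa=0$ for all $\kappa$ (the constraint under the sum is vacuous, since $\mu_j$ lives on the torus transverse to $\bar v_j$ and hence $\widehat{\mu_j}(\kappa)=0$ whenever $\langle\kappa,\bar v_j\rangle\neq0$). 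For $\mathscr{T}_n^p(V_{\mathbb{T}})$ one has $\mu_j=m_j\lambda$, so $R_\kappa=0$ for $\kappa\neq0$ trivially and $R_0=\sum_j m_j\bar v_j=0$ by balancing; this proves closedness for every tropical $p$-cycle.

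For strong extremality I would feed an arbitrary closed order-zero $\tilde T$ with support $\Log^{-1}(V_{\mathbb{T}})$ into the equations $R_\kappa=0$, and here conditions (2) and (3) intervene. For $\kappa\neq0$ the index set $J_\kappa=\{j:\bar v_j\in\kappa^\perp\}$ is a proper subset of $\{1,\dots,s\}$: it cannot be all of it, since the $\bar v_j$ span $W^\perp$, and it cannot contain $n-p+1$ or more indices, since by sub-independence (Definition \ref{subind}) every proper subset of the $\bar v_j$ is independent while $\kappa^\perp$ has dimension $n-p$. Hence $\{\bar v_j\}_{j\in J_\kappa}$ is linearly independent and $R_\kappa=0$ forces $\widehat{\mu_j}(\kappa)=0$ for all $j$ and all $\kappa\neq0$, so each $\mu_j$ is a multiple of $\lambda$; while $R_0=\sum_j\widehat{\mu_j}(0)\,\bar v_j=0$, together with $\sum_j m_j\bar v_j=0$ and Remark \ref{remarkonsubindependency}, yields a single $\rho\in\CC$ with $\widehat{\mu_j}(0)=\rho\,m_j$ for every $j$; thus $\mu_j=\rho\,m_j\,\lambda$ throughout the star of $W$. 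Finally, connectedness in codimension $1$ (condition (1)) propagates the constant $\rho$ from facet to facet, so one $\rho$ serves all cells and $\tilde T=\rho\,\mathscr{T}_n^p(V_{\mathbb{T}})$. The main obstacle I anticipate is the rigorous setup of the transverse reduction together with the sign bookkeeping that turns $d\tilde T=0$ into the displayed vector equations: one must justify the fibrewise use of Theorem \ref{supportdecom}, check that the $\sigma$-directions decouple as spectators, fix a $j$-independent orientation of the $\bar v_j$-circles, and invoke Theorem \ref{supportsmall} to discard any contribution on the higher-codimension loci $\Log^{-1}(W')$, whose Cauchy--Riemann dimension is $<p$. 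Once this scaffolding is in place, all the arithmetic content sits in the two appeals to linear (sub-)independence above.
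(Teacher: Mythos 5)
Your proposal is correct, and its analytic engine coincides with the paper's: near each facet you decompose any competing normal closed current via Theorem \ref{supportdecom} into fiber integrals against Radon measures $\mu_j$, discard the discrepancy supported on $\Log^{-1}(W)$ by Theorem \ref{supportsmall}, determine the $\mu_j$ through their Fourier coefficients, conclude $\mu_j=\rho\, m_j\lambda$ from balancing plus sub-independence via Remark \ref{remarkonsubindependency}, and propagate $\rho$ using connectedness in codimension $1$. Where you genuinely differ is the treatment of the single-facet case for $p>1$. The paper (Lemma \ref{mainlem-higherdim}) redoes the Stokes/Fourier computation directly in $(\CC^*)^n$, pairing against the polar-coordinate test forms $\omega_{\eta,\nu}^{[K,J]}$, so that the minors $\mathrm{Det}_J(w_1,\dots,w_{p-1},v_P)$ appear and closedness is read off as the vanishing of all $p\times p$ minors, i.e. the balancing condition in the form of Remark \ref{remark-higher-balancing}. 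You instead split $(\CC^*)^n\simeq(\CC^*)^{p-1}\times(\CC^*)^{n-p+1}$ by a monoidal change of coordinates adapted to $W$, exhibit each $\mathscr{T}_{P_j}$ locally as the product of the fixed closed current $[(\CC^*)^{p-1}]$ with the $(1,1)$-dimensional current of the ray $h_W(v_j)$ in the transverse factor, and quote the already-settled $p=1$, one-vertex case (the paper's Lemma \ref{mainlemma}). This buys conceptual economy — one transverse computation instead of two parallel ones, with the projected balancing condition (\ref{balancing-with-proj}) appearing for free — at the price of scaffolding the paper never needs: you must rigorously justify the product structure of the currents (in particular that $h_W(v_j)$ is primitive in the quotient lattice $\ZZ^n/(W\cap\ZZ^n)$, which does follow from $\{w_1,\dots,w_{p-1},v_j\}$ being a $\ZZ$-basis of the saturated lattice $H_{P_j}\cap\ZZ^n$, and is exactly what guarantees that every character of the leaf torus arises as some $\kappa$ with $\langle\kappa,h_W(v_j)\rangle=0$), and that the spectator factor can be divided out when testing $d\tilde{T}=0$. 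Your remaining loose ends — orientation bookkeeping, and invoking Theorem \ref{supportsmall} to neglect faces of codimension at least $2$ — are handled at the same level of detail as in the paper's own proof.
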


In order to make our understanding progressive, we first explore the case of 
tropical curves ($p=1$), then that of $p$-dimensional tropical cycles with a single codimension $1$ face, a {facet}. 

\subsection{Tropical $(1,1)$-dimensional currents}\label{construction11section}

In this section we study $\mathscr{T}_n^1(\Gamma)$, where $\Gamma$ is a weighted rational graph. We prove 
Theorem \ref{maintheohigherdim} in this case. Suppose an edge $e$ of $\Gamma$ of weight $m_e$ (spanning the affine line $E\subset \mathbb R^n$) is parameterized by 
$$
t\mapsto t\, v_{e}+a,
$$ 
where $\{a\}$ ($a\in \mathbb \RR^n$) is one of the vertices of $e$, $v_e=v_e^{[a\rightarrow]}\in \RR^n$ is the corresponding (inward) primitive vector for $e$ from the vertex $\{a\}$, and $t \in [0,t_0] \subset \RR$ is a real parameter, $t_0 \in [0, +\infty];$ when $t=\infty$ the edge is a ray. We complete $\{v_e\}$ to a basis $D_e$ of the lattice $\mathbb{Z}^n$, say $D_e =(v_e, U_e) =\big\{v_e,u_1^e,\dots,u_{n-1}^e\big\}$, that is, if one denotes also $D_e$ as the matrix with columns $v_e^t, (u_1^e)^t,\dots, (u_{n-1}^e)^t$, one has $\det(D_e)=\pm 1$, \textit{i.e.} $D_e \in GL(n, \mathbb{Z}).$ 
We can now define an open subset $S_{e,D_e,a}\subset S_{E}:=\Log^{-1}(E)$ as~:    
\begin{equation}\label{definitionS} 
\begin{split} 
& S_{e,D_e,a} := \Big\{\exp (a) \star \tau^{v_e} \star \exp (2i\pi \theta_2 u_1^e)  
\star \dots \star \exp(2i\pi \theta_{n} u_{n-1}^e)~;  \\
& \qquad \qquad 
\tau \in \CC^*,~ 1 < |\tau| < \exp (t_0),\ \theta =(\theta_2,...,\theta_{n})  
\in (\mathbb R/\mathbb Z)^{n-1}\Big\}\,.  
\end{split} 
\end{equation}
\vskip 1mm
\noindent
Such an open set $S_{e,D_e,a}\subset S_E$ 
(considered here as a submanifold with boundary of the manifold $S_E$ with real dimension 
$n+1$) is injectively foliated over the Cartesian product 
$\big(\RR \slash \mathbb{Z} \big)^{n-1}$ through the submersion 
\begin{gather*}\label{foliation} 
\exp (a)\star \tau^{v_e} \star  
\exp(2i\pi \theta_{2} u_1^e)\star \dots \star  \exp(2i\pi \theta_n u_{n-1}^e)   \in S_{e,D,a}  
\\
\xymatrix{\ar@{|->}[d]^{\sigma_{e,D_e,a}} & \\ &}\\
(\theta_{2}\, , \dots ,\theta_n) \in (\RR\slash \ZZ)^{n-1}\,.
\end{gather*}
One also denotes as $\boldsymbol\tau_{e,D_e,a}$ the parameterization map from 
$(\mathbb C^*)^n$ into itself which is used to get (through its inverse)  
the submersion $\sigma_{e,D_e,a}$, that is the monoidal map : 
$$
\boldsymbol \tau_{e,D_e,a} ~: 
(\tau_1,\lambda_2, \dots, \lambda_n) \in (\C^*)^n 
\mapsto \exp (a) \star \tau_1^{v_e} \star \lambda_2^{u_1^e} \star \dots \star \lambda_n^{u_{n-1}^e} 
\in (\mathbb C^*)^n. 
$$  
Denote as $\Sigma_{e,D_e,a}$ the cycle 
\begin{equation*} 
\begin{split} 
& \Sigma_{e,D_e,a}:= \partial S_{e,D_e,a}~: \\  
& (\theta_1,...,\theta_n) \in (\mathbb R/\mathbb Z)^n 
\mapsto \exp (a)\star \exp(2i\pi \theta_1\, v_e)  
\star \exp(2i\pi \theta_2\, u_1^e) \star \dots \star \exp(2i\pi \theta_n u_{n-1}^e) ~.
\end{split} 
\end{equation*} 
The support of the cycle $\Sigma_{e,D_e,a}$ equals $\Log^{-1} (\{a\})$. For each $(\theta_2,...,\theta_{n}) \in (\mathbb R/\mathbb Z)^{n-1}$, denote as $\Delta_{e,D_e,a}$ the fiber $\sigma_{e,D_e,a}^{-1} (\{(\theta_2,...,\theta_{n})\})$ of the submersion $\sigma_{e,D_e,a}$ over $(\theta_2,...,\theta_n)$ and consider 
the $(1,1)$-dimensional positive current in $(\mathbb C^*)^n$ defined as 
$$
T_{e,D_e,a} := \int_{(\theta_2,...,\theta_n)\in (\mathbb R/\mathbb Z)^{n-1}} 
[\Delta_{e,D_e,a} (\theta_2,...,\theta_n)]\, d\theta_2 \dots d\theta_n. 
$$

The current $T_{e,D_e,a}$ is obviously not closed~; but nevertheless, 
its support is the set $\Log^{-1}(e)$. As we have explained in the beginning of this section, the current $T_{e,D_e,a}$ 
is independent of the choice of the completion $D_e$ for $\{v_e\}$ 
because of the invariance of the Lebesgue measure on $(\mathbb R/\mathbb Z)^{n-1}$ 
under the action of the linear group $GL(n-1,\Z)$ (considered in the multiplicative 
sense). In fact $T_{e,D_e,a}$ depends only on 
$e$ and stands as the current $\mathscr T_e$ obtained 
as the restriction to the edge $e$ of the positive $(1,1)$-dimensional current $T_E$ (in order to check this point, one can easily reduce the situation up to translation to the case $a=0$). 
We however keep track of the averaged representation  
\begin{equation}\label{representationTp} 
\mathscr T_e = T_{e,D_e,a} := \int_{(\theta_2,...,\theta_n)\in (\mathbb R/\mathbb Z)^{n-1}} 
[\Delta_{e,D_e,a} (\theta_2,...,\theta_n)]\, d\theta_2 \dots d\theta_n, 
\end{equation} 
where the average of integration currents $[\Delta_{e,D_e,a}]$ indeed depend on 
the specified vertex $a$ of $e$ and on the completion $D_e$ of the set 
$\{v_e\}$, where $v_e=v_e^{[a\rightarrow ]}$ denotes the primitive (inward) vector spanning $E$ and emanating 
from its specified vertex $a$. \\

\begin{lemma}\label{actionofT}
Let $\omega$ be a $1$-test form on $(\CC^*)^n$, with support in a neighborhood of 
$\Log^{-1} (\{a\}) \subset (C^*)^n $, with the restriction 
$$
\omega_{|{\Log^{-1}(\{a\})}} = \sum\limits_{j=1}^n \omega_j(t_1,...,t_n)\, dt_j~.   
$$
Then  
\begin{equation}\label{stokes1}
\begin{split} 
&\langle d \mathscr T_e, \omega \rangle = \\
& \sum\limits_{j=1}^n v_{e,j} \int_{\theta \in (\mathbb \R/\mathbb \Z)^n} \omega_j \Big(
\!v_{e,1} \theta_1\! + \!\sum_{\ell =1}^{n-1} 
u_{\ell,1}^e\theta_{\ell +1},\, ...\,, 
v_{e,n} \theta_1 \!+ \!\sum\limits_{\ell =1}^{n-1} 
u_{\ell,n}^e\, \theta_{\ell +1}\Big) d\theta_1 \cdots d\theta_n\,. 
\end{split}  
\end{equation}   
\end{lemma}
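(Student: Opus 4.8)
The plan is to establish \eqref{stokes1} by a fibre-by-fibre application of Stokes' theorem to the averaged representation \eqref{representationTp}. First I would combine the sign convention \eqref{def-closed} for the exterior derivative of a current with the integral representation of $\mathscr T_e$, so that for the $1$-test form $\omega$ one gets
$$
\langle d\mathscr T_e, \omega\rangle = \int_{(\theta_2,\dots,\theta_n)\in(\RR/\ZZ)^{n-1}} \langle d[\Delta_{e,D_e,a}(\theta_2,\dots,\theta_n)], \omega\rangle\, d\theta_2\cdots d\theta_n .
$$
The interchange of $d$ with the $\theta$-integration is legitimate by Fubini, since $\langle[\Delta_{e,D_e,a}(\theta)], d\omega\rangle$ depends smoothly on $\theta$ and $\omega$ has compact support. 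For each fixed $\theta$ the fibre $\Delta_{e,D_e,a}(\theta)$ is the image of the annulus $\{1<|\tau|<\exp(t_0)\}$ under the monoidal parametrization, hence a smooth complex curve with boundary, and ordinary Stokes on it yields $\langle d[\Delta_{e,D_e,a}(\theta)],\omega\rangle = \int_{\partial\Delta_{e,D_e,a}(\theta)}\omega$.

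Next I would localize the boundary contribution. The boundary of $\Delta_{e,D_e,a}(\theta)$ consists of at most two circles, lying over $|\tau|=1$ and over $|\tau|=\exp(t_0)$; the inner one is contained in $\Log^{-1}(\{a\})$, while the outer one projects to the far endpoint of $e$ and is absent when $e$ is a ray. Since $\omega$ is supported in a neighbourhood of $\Log^{-1}(\{a\})$, only the inner circle $|\tau|=1$ contributes, and this circle is precisely the $\theta_1$-slice of the cycle $\Sigma_{e,D_e,a}$. Parametrizing it by $\tau=\exp(2i\pi\theta_1)$ with $\theta_1\in\RR/\ZZ$, the contributing curve is
$$
\theta_1\mapsto \exp(a)\star\exp(2i\pi\theta_1 v_e)\star\exp(2i\pi\theta_2 u_1^e)\star\dots\star\exp(2i\pi\theta_n u_{n-1}^e).
$$

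It then remains to pull $\omega$ back along this curve. On $\Log^{-1}(\{a\})$ one has $z_j = \exp(a_j)\exp(2i\pi t_j)$, so the angular coordinate of the point above is $t_j = v_{e,j}\theta_1 + \sum_{\ell=1}^{n-1} u_{\ell,j}^e\,\theta_{\ell+1}$, which is exactly the $j$-th argument appearing in \eqref{stokes1}. Using $\omega|_{\Log^{-1}(\{a\})}=\sum_j\omega_j\,dt_j$ and the fact that along the $\theta_1$-circle only $\theta_1$ varies, the pullback of $dt_j$ is $v_{e,j}\,d\theta_1$, so the inner boundary integral equals $\int_{\theta_1\in\RR/\ZZ}\sum_j v_{e,j}\,\omega_j(\dots)\,d\theta_1$. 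Reintegrating over $(\theta_2,\dots,\theta_n)$ merges the two integrations into a single integral of the scalar $\sum_j v_{e,j}\,\omega_j$ over the product torus $(\RR/\ZZ)^n$, giving precisely \eqref{stokes1}.

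The step I expect to be the main nuisance is the orientation and sign bookkeeping rather than any analytic difficulty: the inner circle $|\tau|=1$ carries the boundary orientation induced by the complex orientation of the annulus (clockwise in $\tau$), and this must be reconciled with the convention \eqref{def-closed} and with the chosen inward direction of the primitive vector $v_e=v_e^{[a\rightarrow]}$ so that the coefficients emerge as $+v_{e,j}$ and not $-v_{e,j}$. A secondary point to check is that the neighbourhood of $\Log^{-1}(\{a\})$ carrying $\mathrm{supp}\,\omega$ can be taken small enough to avoid the outer boundary $|\tau|=\exp(t_0)$, which is what permits reading the whole contribution off the single facet $\Log^{-1}(\{a\})$.
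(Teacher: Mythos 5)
Your proposal is correct and takes essentially the same route as the paper's own proof: differentiate under the $\theta$-average, apply Stokes fiberwise so that only the boundary circle over $|\tau|=1$ contributes (the support hypothesis killing the outer boundary), and compute the pullback $dt_j \mapsto v_{e,j}\,d\theta_1$ along the inner circle. The sign bookkeeping you flag resolves exactly as you anticipate: the minus sign from the convention \eqref{def-closed} cancels against the induced boundary orientation $-d\theta_1$ at $\tau_1=1$, yielding the coefficients $+v_{e,j}$ in \eqref{stokes1}.
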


\begin{proof}
By definition of differentiation of currents and Stokes' formula, it follows that, for such 
$\omega$,  
\begin{equation}
\begin{split} 
& \langle d\mathscr T_e,\omega\rangle:= 
 -\int_{(\theta_2,...,\theta_n)\in (\mathbb \R/\mathbb \Z)^{n-1}} 
\big\langle [ \Delta_{e,D_e,a}(\theta_2,...,\theta_n)], d\omega \big \rangle\, d\theta_2 \dots d\theta_n \\ 
&=  \int_{(\theta_2,...,\theta_n)\in (\mathbb \R/\mathbb \Z)^{n-1}} 
\big\langle [ \partial \Delta_{e,D_e,a}(\theta_2,...,\theta_n)], \omega \big \rangle\, d\theta_2 \dots d\theta_n, 
\end{split} 
\end{equation} 
Note that the induced orientation on boundary of each fiber $\partial \Delta_{e, D_e, a}(\theta_2,\dots, \theta_n)$ is given by $-d\theta_1\,,$ since this boundary is obtained by letting $\tau_1 =1$ in  (\ref{definitionS}). Moreover, for each fixed $(\theta_2,\dots, \theta_n)\in (\RR\slash\ZZ)^{n-1}$, $\partial \Delta_{e, D_e, a}(\theta_2,\dots, \theta_n)$ can be understood as the image 
$$
\boldsymbol{\tau}_{{e,D_e,a}}^{(\theta_2,\dots, \theta_n)}(\RR\slash \ZZ):= \boldsymbol{\tau}_{{e,D_e,a}}\big((\RR\slash \ZZ), \theta_2,\dots, \theta_n \big).
$$
Therefore, 
$$ 
\langle d\mathscr T_e,\omega\rangle = \int_{(\theta_2,...,\theta_n)\in (\mathbb \R/\mathbb \Z)^{n-1}} \int_{\theta_1 \in (\mathbb \R/\mathbb \Z)} \big(\boldsymbol{\tau}_{e,D_e,a}^{(\theta_2,\dots, \theta_n)}\big)^*(\omega)\,. 
$$
It is clear that 
\begin{equation}\label{tau-of-tj}
\big(\boldsymbol{\tau}_{e,D_e,a}^{(\theta_2,\dots, \theta_n)}\big)^*(t_j)=t_j \circ \big(\boldsymbol{\tau}_{e,D_e,a}^{(\theta_2,\dots, \theta_n)}\big)
= v_{e,j} \theta_j\! + \!\sum_{\ell =1}^{n-1} 
u_{\ell,j}^e\theta_{\ell +1},
\end{equation}
and
$$
\big(\boldsymbol{\tau}_{e,D_e,a}^{(\theta_2,\dots, \theta_n)}\big)^*(dt_j)=
d\Big(t_j \circ \big(\boldsymbol{\tau}_{e,D_e,a}^{(\theta_2,\dots, \theta_n)}\big)\Big)
= d (v_{e,j} \theta_1\! + \!\sum_{\ell =1}^{n-1} 
u_{\ell,j}^e\theta_{\ell +1})= v_{e,j}\, d\theta_1\,,
$$
which easily give the result. 
\end{proof}

\vskip 2mm

The next lemma relates the balancing condition to closedness of the corresponding currents. We refer the reader for a similar result on ``super currents" to \cite{Lagerberg}. 
Suppose every edge $e$ of $\Gamma$ is weighted by a non-zero integer $m_e$. Then, one has the following lemma.  

\begin{lemma}\label{closed-balanced}
Let $\mathcal P$ a weighted rational $1$-polyedral complex in $\mathbb R^n$, 
$\{a\}$ be one of its vertices and 
$\omega$ be a $1$-test form in $(\mathbb C^*)^n$ supported in an open neighborhood of 
$\Log^{-1}(\{a\})$. One has 
\begin{equation}
\langle d\mathscr T_n^1(\mathcal P),\omega \rangle = 
\sum_{\{e\in \mathcal C_1(\mathcal P)\,;\, \{a\}\prec e\}} m_e\, \langle d\mathscr T_e\,, \omega \rangle = 0\quad 
\Longleftrightarrow \ 
\sum_{\{e\in \mathcal C_1(\mathcal P)\,;\, \{a\}\prec e\}} m_e v_{e}^{[a\rightarrow]} =0\,,
\end{equation}
where $\{a\} \prec e$ means that $\{a\}$ is a vertex of the edge $e$ and $v_{e}^{[a\rightarrow]}$ denotes then the inward primitive vector contained in the edge $e$ and pointing away from $a$ ; In particular, the tropical current $\mathscr T_n^1(V_\mathbb T)$ attached to a tropical curve 
$V_\mathbb T$ is closed. 
\end{lemma}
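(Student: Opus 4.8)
The plan is to compute the left-hand quantity $\langle d\mathscr T_n^1(\mathcal P),\omega\rangle$ directly using Lemma \ref{actionofT}, and then read off the equivalence with the balancing condition from the linear-algebraic structure of the resulting integral. First I would observe that, since $\omega$ is supported in a neighborhood of $\Log^{-1}(\{a\})$, only the edges $e$ incident to $a$ contribute, so by linearity $\langle d\mathscr T_n^1(\mathcal P),\omega\rangle = \sum_{\{a\}\prec e} m_e\,\langle d\mathscr T_e,\omega\rangle$, which already establishes the first equality in the display. The substance is therefore the second equivalence, and the idea is that formula (\ref{stokes1}) expresses $\langle d\mathscr T_e,\omega\rangle$ as a sum over $j$ of $v_{e,j}$ times the integral over the torus $(\RR/\ZZ)^n$ of the $j$-th component $\omega_j$ pulled back along the linear map $\theta\mapsto D_e\,\theta$ (with $a$ as base point absorbed into the restriction of $\omega$).

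The key step is to exploit the invariance of Lebesgue measure on $(\RR/\ZZ)^n$ under the unimodular change of variables given by $D_e\in GL(n,\ZZ)$. For each incident edge $e$, I would change variables in the integral (\ref{stokes1}) so that the argument of $\omega_j$ becomes the standard coordinate; because $D_e$ is unimodular this leaves the Lebesgue measure unchanged, and one can also parametrize the fiber $\Log^{-1}(\{a\})$ by a single torus $(\RR/\ZZ)^n$ common to all edges. After this normalization, $\langle d\mathscr T_e,\omega\rangle$ becomes $\sum_{j=1}^n v_{e,j}\int_{(\RR/\ZZ)^n}\widehat\omega_j$ for fixed functions $\widehat\omega_j$ depending only on $\omega$ and $a$, not on $e$ or on the completion $D_e$. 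Summing against the weights $m_e$ then yields
\[
\langle d\mathscr T_n^1(\mathcal P),\omega\rangle
= \sum_{j=1}^n \Big(\sum_{\{a\}\prec e} m_e\, v_{e,j}^{[a\rightarrow]}\Big)\int_{(\RR/\ZZ)^n}\widehat\omega_j\,,
\]
so the coefficient of each $\widehat\omega_j$ is precisely the $j$-th coordinate of the balancing vector $\sum_{\{a\}\prec e} m_e v_e^{[a\rightarrow]}$.

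From this representation both implications follow. If the balancing condition holds, every coefficient vanishes and hence $\langle d\mathscr T_n^1(\mathcal P),\omega\rangle=0$ for all such $\omega$. Conversely, since the $\widehat\omega_j$ range over an essentially unconstrained family of test functions on the torus (one can choose $\omega$ so that $\int\widehat\omega_j$ picks out any prescribed component while the others vanish), the vanishing of the pairing for all $\omega$ forces each coefficient $\sum_{\{a\}\prec e} m_e v_{e,j}^{[a\rightarrow]}$ to be zero, i.e. the balancing condition at $a$. The final assertion about $V_\mathbb T$ is then immediate: a tropical curve satisfies the balancing condition at every vertex by Definition \ref{trop-cyc-def}, and since any $1$-test form is a finite sum of forms each supported near a single vertex (a consequence of a partition of unity on $(\CC^*)^n$ subordinate to the $\Log^{-1}$ of small neighborhoods of the vertices), $d\mathscr T_n^1(V_\mathbb T)$ pairs to zero with every test form, so $\mathscr T_n^1(V_\mathbb T)$ is closed.

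I expect the main obstacle to be the bookkeeping needed to justify that the $\widehat\omega_j$ are genuinely free, i.e. that the map sending $\omega$ to the tuple of torus-integrals $\big(\int\widehat\omega_j\big)_{j}$ is surjective onto $\RR^n$; this is where one must be careful that the unimodular reparametrizations for distinct edges can be reconciled on a common torus and that no hidden linear relations among the $\widehat\omega_j$ are introduced. Once that surjectivity is in hand, the equivalence is a purely formal consequence of the displayed formula, and the orientation sign $-d\theta_1$ already recorded in the proof of Lemma \ref{actionofT} ensures the coefficients come out as $\sum m_e v_e^{[a\rightarrow]}$ with the correct sign rather than its negative.
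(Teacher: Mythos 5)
Your proof is correct, but it takes a genuinely different route from the paper's. The paper reduces to the special test forms whose restriction to $\Log^{-1}(\{a\})$ is a Fourier character $\chi_{n,\nu}(t)\,dt_j$, $\nu\in\Z^n$ (justified by completeness of the characters in $L^2((\R/\Z)^n)$), and then applies \eqref{stokes1}: every mode $\nu\neq 0$ contributes zero by orthogonality of the pulled-back character, and the mode $\nu=0$ yields exactly the coordinates $\sum_e m_e v_{e,j}$ of the balancing vector. You instead apply \eqref{stokes1} to an arbitrary $\omega$ and observe that $\theta\mapsto D_e\theta$ is a Haar-measure-preserving automorphism of the torus, so that $\langle d\mathscr T_e,\omega\rangle=\sum_j v_{e,j}\int_{(\R/\Z)^n}\omega_j\,dt$ with the averages $\int\omega_j$ independent of $e$ and of the completion $D_e$; the balancing vector then appears as the coefficient tuple, and the equivalence becomes formal. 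Both arguments rest on the same invariance of Lebesgue measure under $GL(n,\Z)$, but yours dispenses with the Fourier decomposition, which makes it slightly more economical for this lemma. Two comments. First, the ``main obstacle'' you flag (surjectivity of $\omega\mapsto(\int\widehat\omega_j)_j$ onto $\R^n$) is in fact immediate: after your change of variables the $\widehat\omega_j$ are just the coefficients $\omega_j$ of the restriction of $\omega$ in the standard angular coordinates of the single torus $\Log^{-1}(\{a\})$, so there is nothing to reconcile between edges, and taking $\omega$ to restrict to $dt_j$ (extended by a radial cutoff) gives $\int\widehat\omega_k=\delta_{jk}$ --- which is precisely the paper's $\nu=0$ test form. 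Second, the paper's Fourier reduction is not a detour: it is exactly the technique reused in Step 3 of Lemma \ref{mainlemma}, where the unknown Radon measures $\mu_e$ on the fibers must be pinned down through all of their Fourier coefficients; your change-of-variables shortcut works here only because the fiber measure is Lebesgue, and would not suffice there.
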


\begin{proof}
To prove the lemma it is enough to check the result for any $1$-test form $\omega$ 
in a neighborhood of $\Log^{-1} (\{a\})$ in $(\mathbb \C^*)^n$ such that 
$\omega = e^{2i\pi \langle \nu,\theta\rangle}\, d\theta_j$ for 
some $j\in \{1,...,n\}$ and $\nu \in \mathbb \Z^n$. This follows 
from the fact that the characters $\theta \mapsto 
\chi_{n,\nu} (\theta):=e^{2i\pi \langle \nu,\theta\rangle}$ ($\nu \in \mathbb \Z^n$) 
form an orthonormal basis for the Hilbert space $L^2_{\mathbb \C}((\mathbb R/\mathbb \Z)^n, d\theta)$.   
Then the equivalence stated here follows from the formula 
\eqref{stokes1} established in Lemma \ref{actionofT}. 
The second claim follows from the fact that 
the balancing condition is fulfilled 
at any vertex $\{a\}$ of any tropical curve $V_{\mathbb \T}$. 
\end{proof}

Recall that for a tropical curve $\Gamma \subset \RR^n$ strong extremality means $(n+1)$-valency 
for any vertex $\{a\}$ and sub-independency of the set whose elements are the $(n+1)$ primitive vectors 
$v_e^{[a\rightarrow]}$ ($e\in \mathcal C_1(\Gamma)$ such that $\{a\} \prec e$).

\begin{theorem}\label{maintheorem}
Let $\Gamma \subset \RR^n$ be a strongly extremal tropical curve. Then the 
$(1,1)$-dimensional closed current normal $\mathscr {T}_n^1(\Gamma)$ is strongly extremal in $\mathcal{D}_{1,1}'(\CC^*)^n.$
\end{theorem}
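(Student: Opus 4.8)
The plan is to show that any competitor $\tilde T$ — a closed order-zero $(1,1)$-dimensional current with $\mathrm{supp}\,\tilde T = \Log^{-1}(\Gamma)$ — must restrict over each edge to a constant multiple of $\mathscr T_e$, and then to patch these constants into a single global $\rho$. Since $\tilde T$ is closed and of order zero it is normal, and the fibres $\Log^{-1}(\{a\})$ over the vertices are totally real $n$-tori, hence of Cauchy--Riemann dimension $0 < 1$; by Theorem \ref{supportsmall} the portion of $\tilde T$ carried by these fibres vanishes, so $\tilde T$ is determined by its restrictions to the open cylinders $U_e := \Log^{-1}(\mathrm{int}\,e)$. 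On each such $U_e$ the restriction $\tilde T|_{U_e}$ is still closed and of order zero and is supported in the CR-submanifold $S_E \cap U_e$ of Cauchy--Riemann dimension $1$, which is foliated by the complex curves $\Delta_e(\theta)$ over the base torus $(\RR/\ZZ)^{n-1}$. Theorem \ref{supportdecom} therefore furnishes a unique complex Radon measure $\mu_e$ on $(\RR/\ZZ)^{n-1}$ with
\[
\tilde T|_{U_e} = \int_{(\RR/\ZZ)^{n-1}} [\Delta_e(\theta) \cap U_e]\, d\mu_e(\theta).
\]
The goal becomes to prove that each $\mu_e$ is a constant multiple of Lebesgue measure.

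Next I would exploit the global closedness of $\tilde T$ through the vertices. Fixing a vertex $a$ and a test form $\omega$ supported near $\Log^{-1}(\{a\})$, the absence of mass on the vertex fibre lets me split $\langle \tilde T, d\omega\rangle$ over the incident edges and apply Stokes fibrewise exactly as in Lemma \ref{actionofT}, the only change being that the Lebesgue average is replaced by integration against $\mu_e$. Expanding $\omega|_{\Log^{-1}(\{a\})} = \sum_j \omega_j\, dt_j$ into characters $\chi_\nu = e^{2i\pi\langle\nu,\theta\rangle}$, $\nu\in\ZZ^n$, the computation behind \eqref{stokes1} shows that $\chi_\nu$ contributes to $\langle d\tilde T, \omega\rangle$ only when $\langle \nu, v_e\rangle = 0$, in which case its contribution is $v_e$ times the Fourier coefficient of $\mu_e$ at the frequency $(\langle\nu,u_1^e\rangle,\dots,\langle\nu,u_{n-1}^e\rangle)$, which I abbreviate $\hat\mu_e(\nu)$. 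Requiring $\langle d\tilde T, \omega\rangle = 0$ for every such $\omega$ yields, for each $\nu \in \ZZ^n$ and each vertex $a$, the vector identity
\[
\sum_{\{e\,:\, a \prec e,\ \langle\nu, v_e\rangle = 0\}} \hat\mu_e(\nu)\, v_e^{[a\to]} = 0.
\]

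The strong extremality hypotheses now finish the argument. Any $n$ of the $n+1$ primitive vectors $v_e^{[a\to]}$ at $a$ are linearly independent, being a proper subset of a sub-independent set, so a nonzero $\nu$ can be orthogonal to at most $n-1$ of them; the vectors occurring in the displayed identity thus form a linearly independent family, forcing $\hat\mu_e(\nu) = 0$ for every edge $e$ with $\langle\nu, v_e\rangle = 0$ and every $\nu \neq 0$. Because $D_e \in GL(n,\ZZ)$, the map $\nu \mapsto (\langle\nu, u_1^e\rangle,\dots,\langle\nu, u_{n-1}^e\rangle)$ sends $\{\nu\in\ZZ^n : \langle\nu,v_e\rangle=0\}$ isomorphically onto $\ZZ^{n-1}$, so the vanishing just obtained says precisely that every nonzero Fourier coefficient of $\mu_e$ is zero; hence $\mu_e = c_e\cdot(\text{Lebesgue})$ and $\tilde T|_{U_e} = c_e\,\mathscr T_e$ for a constant $c_e\in\CC$. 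Finally the mode $\nu=0$ of the vertex identity reads $\sum_{a\prec e} c_e\, v_e^{[a\to]} = 0$, while the balancing condition (as used in Lemma \ref{closed-balanced}) gives $\sum_{a\prec e} m_e\, v_e^{[a\to]} = 0$; by Remark \ref{remarkonsubindependency} there is a scalar $\rho_a$ with $c_e = \rho_a m_e$ for all edges at $a$. Two vertices joined by an edge $e$ share the value $c_e/m_e$, so connectedness of $\Gamma$ forces $\rho_a$ to be a single constant $\rho$, whence $\tilde T = \rho\,\mathscr T_n^1(\Gamma)$.

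The main obstacle is the analytic bookkeeping around the vertices: justifying rigorously that $\tilde T$ places no mass on the totally real fibres $\Log^{-1}(\{a\})$, so that the pairing $\langle\tilde T, d\omega\rangle$ genuinely splits into edgewise Stokes contributions, and checking that the fibrewise boundary integrals in the style of Lemma \ref{actionofT} remain valid for the general complex measures $\mu_e$ rather than for Lebesgue measure. Once this reduction to the Fourier identity is secured, the combinatorial input — sub-independence together with connectedness, via Remark \ref{remarkonsubindependency} — is comparatively formal.
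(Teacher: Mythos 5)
Your proposal is correct and follows essentially the same route as the paper: decompose the competitor current edgewise via Theorems \ref{supportdecom} and \ref{supportsmall}, convert closedness at each vertex into Fourier-coefficient identities for the measures $\mu_e$ by the Stokes computation of Lemma \ref{actionofT}, kill all nonzero frequencies using sub-independence, and match the zero modes to the balancing condition via Remark \ref{remarkonsubindependency} before patching constants by connectedness. The only (cosmetic) difference is that you run the argument globally over all vertices at once, whereas the paper first isolates the one-vertex case as Lemma \ref{mainlemma} and then localizes around each vertex.
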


We first prove first Theorem \ref{maintheorem} for a tropical curve $\Gamma$ which has only one vertex. 

\begin{lemma}\label{mainlemma}
Suppose $\Gamma \in \RR^n$ is a strongly extremal tropical curve with only one vertex at the origin.  Then $\mathscr{T}_n^1(\Gamma) $ is strongly extremal in $\mathcal{D}_{1,1}'(\CC^*)^n\,.$
\end{lemma}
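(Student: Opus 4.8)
The goal is to show that $\mathscr{T}_n^1(\Gamma)$, where $\Gamma$ has a single vertex at the origin with $(n+1)$ edges whose primitive directions $v_1,\dots,v_{n+1}$ are sub-independent (hence any $n$ of them form a basis of $\RR^n$), is strongly extremal: any order-zero closed current $\tilde T$ of bidimension $(1,1)$ with $\mathrm{supp}\,\tilde T = \mathrm{supp}\,\mathscr{T}_n^1(\Gamma) = \Log^{-1}(\Gamma)$ is a complex multiple of $\mathscr{T}_n^1(\Gamma)$. The strategy is to decompose $\Log^{-1}(\Gamma)$ edge by edge and to use the support theorems of Section~2 to pin down the structure of $\tilde T$ on each open stratum $\Log^{-1}(\mathrm{int}\,e_j)$.

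First I would restrict attention to a single open edge. Over $\Log^{-1}(\mathrm{int}\,e_j)$ the set $S_{E_j} = \Log^{-1}(E_j)$ is a Cauchy--Riemann submanifold of CR-dimension exactly $1$, foliated by the complex curves $\Delta_{e_j,D_{e_j},a}(\theta)$ via the submersion $\sigma_{e_j,D_{e_j},a}$ onto $(\RR/\ZZ)^{n-1}$, whose fibers satisfy the tangency hypothesis $T_zS \cap iT_zS = T_zF_z$. Theorem~\ref{supportdecom} then applies: the restriction of $\tilde T$ to this stratum is $\int_{\theta}[\Delta_{e_j}(\theta)]\,d\mu_j(\theta)$ for a (complex) Radon measure $\mu_j$ on $(\RR/\ZZ)^{n-1}$. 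The key rigidity input is that a \emph{closed} current of this averaged form, supported on a single foliated cylinder, forces $\mu_j$ to be translation-invariant, i.e.\ a constant multiple $\rho_j$ of Lebesgue measure, so that $\tilde T|_{\Log^{-1}(\mathrm{int}\,e_j)} = \rho_j\,\mathscr{T}_{e_j}$. The cleanest way to see this is to test $d\tilde T = 0$ against forms $e^{2i\pi\langle\nu,\theta\rangle}\,dt_j$ exactly as in Lemma~\ref{actionofT} and Lemma~\ref{closed-balanced}: the Fourier coefficients of $\mu_j$ in the $n-1$ directions transverse to the edge must vanish except the zeroth one, because those directions are ``free'' inside the foliation and the closedness relation annihilates every nonzero character. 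This uses that $\tilde T$ has order zero, so that pairing against such test forms is legitimate.

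Next I would glue across the vertex. Having $\tilde T = \rho_j\,\mathscr{T}_{e_j}$ on each of the $n+1$ open edges, it remains to show all the constants $\rho_j$ coincide, so that $\tilde T = \rho\,\mathscr{T}_n^1(\Gamma)$ globally (the remaining mass is supported on $\Log^{-1}(\{0\})$, a set of CR-dimension $0 < 1$, hence carries no normal $(1,1)$-current by Theorem~\ref{supportsmall}, so closedness of $\tilde T$ is a genuine condition linking the edges). I would compute $\langle d\tilde T,\omega\rangle$ for $\omega$ supported near $\Log^{-1}(\{0\})$ using \eqref{stokes1}: closedness gives, for each component index, a relation of the form $\sum_j \rho_j\, v_{j}\, (\text{Fourier data}) = 0$, which upon extracting the zeroth Fourier mode reduces to the vector identity $\sum_{j=1}^{n+1}\rho_j\, m_j\, v_j = 0$ in $\CC^n$. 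Since $\Gamma$ is balanced we also have $\sum_j m_j v_j = 0$. The sub-independence hypothesis is now exactly what Remark~\ref{remarkonsubindependency} needs: two complex relations among a sub-independent family of vectors are proportional, yielding $\rho_j m_j = \rho\, m_j$ for a single $\rho\in\CC$, hence $\rho_j = \rho$ for all $j$. Therefore $\tilde T = \rho\,\mathscr{T}_n^1(\Gamma)$.

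**Main obstacle.**
The delicate step is the rigidity on a single edge — proving that the transverse measure $\mu_j$ delivered by Theorem~\ref{supportdecom} is forced to be Lebesgue by closedness alone, rather than merely bounded. One must argue carefully that testing against characters in the torus directions is permissible for an order-zero current and that the vanishing of $\langle d\tilde T,\cdot\rangle$ really controls \emph{every} nonzero Fourier mode of $\mu_j$, not just finitely many; this is where the precise foliated structure \eqref{definitionS} and the computation \eqref{tau-of-tj} of the pullback of coordinate functions do the work. Once each edge is rigidified, the gluing is a short linear-algebra argument powered entirely by sub-independence, so I expect the per-edge analysis to absorb essentially all of the difficulty.
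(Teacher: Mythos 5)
There is a genuine gap at what you call the ``key rigidity input'': the claim that closedness of $\tilde T$ on a single open stratum $\Log^{-1}(\mathrm{int}\, e_j)$ forces the transverse measure $\mu_j$ to be Lebesgue is false. On that stratum each fiber $\Delta_{e_j,D_{e_j},0}(\theta)$ is a boundaryless piece of complex curve (its boundary circles lie over the endpoints of $e_j$, hence outside the stratum), so by Stokes' formula the current $\int [\Delta_{e_j}(\theta)]\, d\mu_j(\theta)$ is closed there for \emph{every} Radon measure $\mu_j$; testing $d\tilde T$ against forms supported in this region returns $0$ identically and yields no control on the Fourier coefficients $\widehat{\mu}_j(\ell)$. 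Concretely, $d\mu_j = \bigl(1+\tfrac12 \cos(2\pi\theta_2)\bigr)\, d\theta_2\cdots d\theta_n$ has full support, so the corresponding averaged current is closed on the stratum and has the same support as $\mathscr{T}_{e_j}$ there, yet is not proportional to it (this is also why a single straight line, carrying no vertex, fails to be strongly extremal: any transverse measure gives a closed current with the right support). Consequently your two-stage structure --- first rigidify each edge separately, then glue using only the zeroth Fourier mode at the vertex --- cannot work: after the decomposition step you only know $\tilde T = \sum_j \int[\Delta_{e_j}(\theta)]\,d\mu_j(\theta)$ with unknown measures $\mu_j$, and the gluing relation $\sum_j \rho_j m_j v_j = 0$ you invoke is not yet available.

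The correct mechanism, and the one the paper uses, extracts \emph{all} the information from closedness at the vertex. Testing $d\tilde T = 0$ against forms $\omega_\nu^{[j]}$ whose restriction to $\Log^{-1}(\{0\})$ is $\chi_{n,\nu}\, dt_j$ gives, for every $\nu \in \ZZ^n$, the vectorial equation
$$
\sum_{e} \delta^0_{\langle\nu,v_e\rangle}\, \widehat{\mu}_e\bigl(-\langle\nu,u_1^e\rangle,\dots,-\langle\nu,u_{n-1}^e\rangle\bigr)\, v_e = 0 .
$$
For $\nu = 0$ this, together with the balancing condition and sub-independence, gives $\widehat{\mu}_e(0) = \rho\, m_e$ for a single $\rho$, as in your gluing step. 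But for a nonzero transverse frequency $\ell$ and a fixed edge $e$, one must choose the unique $\nu_e \in \ZZ^n$ dual to the lattice basis $D_e$ with $\langle\nu_e,v_e\rangle = 0$ and $\langle\nu_e,u_j^e\rangle = -\ell_{j+1}$; since the $n+1$ primitive vectors span $\RR^n$, some other edge $e'$ satisfies $\langle\nu_e,v_{e'}\rangle \neq 0$ and drops out of the equation, and sub-independence of the remaining proper subset of $\{v_{e'}\}$ forces every surviving coefficient, in particular $\widehat{\mu}_e(\ell)$, to vanish. So the vanishing of the nonzero Fourier modes of each $\mu_e$ is an inter-edge statement powered by valency, spanning, and sub-independence at the vertex, not a per-edge consequence of closedness; your proposal is missing exactly this step, and it is where the actual difficulty of the lemma lives.
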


\begin{proof}
The proof of the lemma is divided into three steps. \\

Each edge $e \in \mathcal C_1(\Gamma)$ is contained in an affine line $E$. For such a $E$ consider $w=v_e$ the inward primitive vector $w=v_e^{[0\rightarrow]}$ initiated 
from the vertex $\{0\}$, lying in $E$. We fix an arbitrary completion $D_e$ of 
$\{v_e\}$ with vectors $u^e_1,...,u_{n-1}^e$ in $\Z^n$.  One has 
$$
\mathscr T_{1}^n (\Gamma) = \sum\limits_{e\in \mathcal C_1(\Gamma)} m_e\, T_{e,D_e,\{0\}} 
$$
as seen in Subsection \ref{construction11section} above. 
\vskip 2mm
\noindent
We assume from now on that $\widetilde{\mathscr T}$ is a $(1,1)$-dimensional normal closed current in 
$(\mathbb C^*)^n$ with support equal to that of 
$\mathscr T_1^n (\Gamma)$, {\it i.e.} ${\rm Supp}\, 
(\widetilde{\mathscr T}) = \Log^{-1} (\Gamma)$. 
\vskip 2mm 
\noindent
\textbf{Step 1.} For any $e\in \mathcal C_1(\Gamma)$, let 
$\mathcal U_e$ be the open subset of $(\mathbb C^*)^n$ defined as 
$$
\mathcal U_e := \Log^{-1} \big(\R^n \setminus \bigcup_{\stackrel{e'\in \mathcal C_1(\Gamma)}
{e'\not=e}} |e'|\big). 
$$
It follows from Theorem \ref{supportdecom} that, for each $e\in \mathcal C_1(\Gamma)$, there is  
a Radon measure $d\mu_e$ 
on $(\RR \slash \mathbb{Z})^{n-1}$ such that (as currents in the open subset $\mathcal U_{e}$ 
of $(\mathbb C^*)^n$) : 
$$
\widetilde {\mathscr T}_{|\mathcal U_e} = 
\int\limits_{(\theta_2,...,\theta_n)\in (\mathbb \R/\mathbb \Z)^{n-1}} 
[\Delta_{e,D_e,0} (\theta_2,...,\theta_n)]\, d\mu_e (\theta_2,...,\theta_n). 
$$ 
Since the normal current $\widetilde {\mathscr T}_{|\mathcal U_e}$ extends globally as the $(1,1)$-dimensional normal closed current $\widetilde {\mathscr T}$  
in the whole ambient manifold $(\mathbb C^*)^n$, one can certainly define 
$(1,1)$-dimensional normal current $\widetilde {\mathscr T}_e$ in 
$(\mathbb \C^*)^n$ as 
\begin{equation}\label{expressiontildeT} 
\widetilde {\mathscr T}_e := \int\limits_{(\theta_2,...,\theta_n)\in (\mathbb \R/\mathbb \Z)^{n-1}} 
[\Delta_{e,D_e,0} (\theta_2,...,\theta_n)]\, d\mu_e (\theta_2,...,\theta_n).  
\end{equation} 
The support of $\widetilde {\mathscr T}_e$ equals $\Log^{-1}(e)$, which implies that 
all currents $\widetilde{\mathscr T}_{e'}$ (for $e'\in \mathcal C_1(\Gamma)$ such that 
$e'$ is distinct from $e$) vanish in $U_e$. Hence 
$\widetilde {\mathscr T} = \sum_{e\in \mathcal C_1(\Gamma)} 
\widetilde {\mathscr T}_e$ in each $U_e$. Hence the current 
$\widetilde {\mathscr T} - \sum_{e\in \mathcal C_1(\Gamma)} \widetilde {\mathscr T}_e$ 
(which is normal) is supported by $\Log^{-1} (\{0\})$ 
which equals to the real $n$-dimensional torus and therefore has Cauchy-Riemann dimension $0$. 
It follows then from Theorem \ref{supportsmall} that one has the decomposition : 
$$
\widetilde {\mathscr T}  = \sum\limits_{e\in \mathcal C_1(\Gamma)} \widetilde {\mathscr T}_e 
$$
(as currents this time in the whole ambient space $(\mathbb \C^*)^n$).  

\begin{remark}\label{fixinglatticebases}
\rm{
Although the current $\mathscr{T}_n^{1}(\Gamma)$ is not dependent on completions of $v_e$ to lattice bases $D_e$, the representation in (\ref{expressiontildeT}) is. The representation, indeed depends on the chosen foliation which comes from the completions $D_e$ of $v_e$ to a $\ZZ$-basis for every edge $e$ of $\Gamma$. Therefore as mentioned before, at this point we need to fix a lattice basis for each of the edges of the tropical graph. 
}
\end{remark}

\vskip 2mm
\noindent 
\textbf{Step 2.} One can repeat the proof of Lemma \ref{actionofT} for each edge $e\in \mathcal C_1(\Gamma)$ and use the expression \eqref{expressiontildeT} of $\widetilde {\mathscr T}_e$, in order to get the following result.  

\begin{lemma}\label{actionofT2}
Let $\omega$ be a $1$-test form on $(\CC^*)^n$, with support in a neighborhood of 
$\Log^{-1} (\{0\})$ with restriction given by

$$
\omega_{|\Log^{-1} (\{0\})} = 
\sum\limits_{j=1}^n \omega_j(t_1,...,t_n)\, dt_j.   
$$
Then  
\begin{equation}\label{stokes2} 
\begin{split} 
& \langle d \widetilde {\mathscr T_e}, \omega \rangle = \sum\limits_{j=1}^n v_{e,j}\times \\
&  \int_{\theta \in (\mathbb \R/\mathbb \Z)^n} 
\omega_j \Big(
v_{e,j} \theta_1 + \sum\limits_{\ell =1}^{n-1} 
u_{\ell,1}^e\theta_{\ell +1},\, ...\,, 
v_{e,n} \theta_1 + \sum\limits_{\ell =1}^{n-1} 
u_{\ell,n}^e\, \theta_{\ell +1}\Big)\, d\theta_1\otimes d\mu_e (\theta_2,...,\theta_n)\,. 
\end{split}  
\end{equation}   
\end{lemma}
\vskip 2mm
\noindent
\textbf{Step 3.} The current $\widetilde{\mathscr T} =\sum_{e\in \mathcal C_1(\Gamma)} 
\widetilde{\mathscr T}_e$ 
is closed by hypothesis. We try to fully exploit this property in order to derive  
information on the measures $\mu_e$, $e\in \mathcal C_1(\Gamma)$. 
To do that, we use the fact that a Radon measure $d\mu$ on the group 
$(\mathbb R/\mathbb \Z)^{n-1}$ is characterized by the complete list of its Fourier 
coefficients 
\begin{equation*}
\begin{split} 
& \widehat \mu (\nu) = \int_{[0,1]^{n-1}} \chi_{n-1,\nu} (\theta_2,...,\theta_n)\, d\theta_2\dots d\theta_n : 
= \int_{[0,1]^{n-1}} \exp (-i \langle \nu,\theta\rangle)\, d\theta_2\dots d\theta_n \\
& \qquad  (\nu \in \mathbb \Z^{n-1}).  
\end{split} 
\end{equation*}  
Fix $e\in \mathcal C_1(\Gamma)$. 
Let $\omega_\nu^{[1]}$ be a $1$-test form on $(\CC^*)^n$, with support in a neighborhood of 
$\Log^{-1} (\{0\})$ such that its restriction is given by 
$$
(\omega_\nu^{[1]})_{|\Log^{-1} (\{0\})} = \chi_{n,\nu} (t_1,...,t_n)\, dt_1.    
$$
After simplifications, \eqref{stokes2} reduces to the scalar equation : 
\begin{equation} 
\langle d\, \widetilde{\mathscr T}_e , \omega_\nu^{[1]} \rangle 
= \delta^0_{\langle \nu, v_e \rangle} \, \widehat \mu_e
\big(-\langle \nu, u_1^e \rangle, \dots, -\langle \nu, u_{n-1}^e \rangle\big)\, v_{e,1}
\end{equation} 
($\delta_\alpha^\eta$ denotes here the Kronecker's symbol).  
Since $\widetilde{\mathscr T}$ is closed we conclude, after performing the 
same computations for all $e$ in $\mathcal C_1(\Gamma)$, that  
\begin{equation}\label{scalarequation11}
0= \langle d\, \widetilde{\mathscr T}, \omega_\nu^{[1]} 
\rangle = \sum_{e\in \mathcal C_1(\Gamma)} \langle d\, \widetilde{\mathscr T}_e, \omega_\nu^{[1]}\rangle 
= \sum_{e\in \mathcal C_1(\Gamma)}  
\delta^0_{\langle \nu, v_{e} \rangle}\, \widehat \mu_e
\big(-\langle \nu, u_1^e \rangle, \dots, -\langle \nu, u_{n-1}^e \rangle\big)\, v_{e,1}\,. 
\end{equation} 
If one performs the same operations when $\omega_\nu^{[1]}$ is replaced by 
$\omega_\nu^{[j]}$ ($1\leq j\leq n$) such that 
$$(\omega_\nu^{[j]})_{|\Log^{-1} (\{0\})} = \chi_{n,\nu} (t_1,...,t_n)\, dt_j,$$
one gets the vectorial equation 
\begin{equation}\label{vectorial}
\sum_{e\in \mathcal C^1(\Gamma)}
\ \delta^0_{\langle \nu, v_{e} \rangle} 
\widehat{\mu}_e \big(-\langle \nu, u_1^e \rangle, \dots, -\langle \nu, u_{n-1}^e \rangle\big)\, v_e =0.
\end{equation}
Equation \eqref{vectorial} implies the two following facts :  
\begin{itemize} 
\item Taking $\nu = (0,\dots,0)$ leads to 
$$
\sum_{e\in \mathcal C_1(\Gamma)} \widehat\mu_e(0, \dots, 0)\, v_e =0. 
$$
Recall that the balancing condition 
$\sum_{e\in \mathcal C^1(\Gamma)} m_e\, v_e=0$
is also satisfied, it follows from the sub-independency hypothesis  
(see Remark \ref{remarkonsubindependency}) 
that there exists a complex number $\rho$ such that 
$$
\widehat \mu_e(0,\dots,0) =\rho \, m_e\quad \forall\, e\in \mathcal C_1(\Gamma).  
$$

\item Let ${\ell}=(\ell_2,\dots,\ell_{n})\neq (0,\dots,0)$ be an arbitrary non-zero integral vector. Fix $e\in \mathcal C_1(\Gamma)$. There exists a unique $\nu_e\in \mathbb \Z^n$ such that at the same time  
$\langle \nu_e, v_e \rangle=~0$ and $\langle \nu_e, u_j^e\rangle=- \ell_{j+1}$ 
for $j=1,...,n-1$, since $D_e=\{v_e,u_1^e,...,u_{n-1}^e\}$ is a $\ZZ$-basis of $\ZZ^n\,.$ Since the graph $\Gamma$ is $(n+1)$-valent 
and the $(n+1)$-primitive vectors $v_{e'}$ ($e'\in \mathcal C_1(\Gamma)$) 
affinely span the whole $\RR^n$, there exists at least one edge $e'[e]$ (distinct from $e$) of 
$\Gamma$ such that $\langle \nu_e,v_{e'[e]}\rangle \not=0$, thus $\delta^0_{\langle \nu_e,v_{e'[e]}\rangle} =0$.   
Therefore, in view of Remark \ref{remarkonsubindependency}, all of the 
coefficients involved in the vectorial equation (\ref{vectorial}) must vanish, as well as 
$\delta^0_{\langle \nu_e, v_e\rangle}\widehat{\mu}_{e}(\ell)=\widehat \mu_e(\ell)$, that is $\widehat \mu_e (\ell) =0$. 
Consequently, for every $0\not=\ell\in \mathbb Z^{n-1},$ we have $\widehat{\mu}_{e}(\ell)=0$. 
\end{itemize}

It means that every $d\mu_e$ ($e\in \mathcal C_1(\Gamma)$) 
is a Lebesgue measure given by $d\mu_e(\theta_2,...,\theta_n) = \rho\, m_e\, d\theta_2 \dots \theta_n$, and therefore,  
$\widetilde {\mathscr T} = \rho \mathscr T_n^1(\Gamma)$. 
This concludes to the strong extremality of $\mathscr T_n^1(\Gamma)$. 
\end{proof}

\vskip 2mm
Now it is easy to prove the Theorem \ref{maintheorem}. 

\begin{proof}[Proof of Theorem \ref{maintheorem}.]
Let $\widetilde{\mathscr T}$ be a closed $(1,1)$-dimensional 
normal current with support $\Log^{-1}(|\Gamma|)$. For any vertex $a$ of $\Gamma$ there is an open neighborhood $\mathcal V_a$ of $a$ in $\RR^n$ which does not contain any other vertex of the tropical curve $\Gamma$. 
We are thus reduced to the situation of a tropical curve with just one vertex. It follows then from the Lemma \ref{mainlemma} (the reasoning may be applied locally, in the open set $\Log^{-1} (\mathcal V_a)$ instead as in 
$\Log^{-1}(\R^n) = (\mathbb  C^*)^n$)  that for each vertex $\{a\}$ of $\Gamma$ there is a complex number $\rho_a$ 
such that
$$
\widetilde{\mathscr T}_{|_{\Log^{-1}(\mathcal V_a)}}= \rho_a \, {\mathscr T_n^1(\Gamma)}_{|_{\Log^{-1}(\mathcal V_a)}}. 
$$ 
Similarly for an adjacent vertex $\{b\}$, we can write for some complex number $\rho_b$ 
$$
\widetilde{\mathscr T}_{|_{\Log^{-1}(\mathcal V_b)}}= \rho_b \, {\mathscr T_n^1(\Gamma)}_{|_{\Log^{-1}(\mathcal V_b)}}. 
$$ 
On the other hand, if $\{a\}$ and $\{b\}$ are connected via the edge $e$, 
then we have, using the notations from the previous lemma, that 
$$
\widetilde{\mathscr T}_e = \rho_a \, m_e \, \mathscr T_e = 
\rho_b \, m_e \, \mathscr T_e 
$$
(as currents in some open neighborhood of $\Log^{-1}(e\setminus \{a,b\})$ in 
$(\CC^*)^n$). Hence $\rho_a=\rho_b$. Since $\Gamma$ is strongly extremal and thus connected, one can show 
that all $\rho_a$ are indeed equal by taking a chain of successive adjacent vertices 
from $\{a\}$ to an arbitrary other vertex.   
\end{proof}


\subsection{Tropical $(p,p)$-dimensional currents}

In this section we prove the Theorem \ref{maintheohigherdim}. We start by treating the simplest case, namely when  $V_{\mathbb{T}} \subset \RR^n$ is a tropical $p$-cycle 
with only one facet $W$. Note that such a hypothesis implies that this facet is in fact an affine $(p-1)$-plane 
in $\mathbb R^n$ and that all $p$-cells are of the form $\mathbb [0,\infty[\times v_P + W$ for some primitive inward vector $v_P=v_P^{[W\rightarrow ]}$. Let us analyze the current $\mathscr T_n^p (V_{\mathbb{T}})$ in that particular case. Assume that $W$ (which is here assumed 
to be the sole facet of $V_{\mathbb T}$) passes through the origin and is the common facet of the $p$-dimensional polyhedra $P_1,\dots,P_s\,$, $s\geq 3$, with corresponding weights $m_P$.  We have already shown in the beginning of Section \ref{tropicalcurrents} 
that in the definition of $\mathscr T_n^p (V_\T)$ 
$$
\mathscr T_n^p (V_\T) = \sum\limits_{P \in \{P_1,\dots,P_s\}} m_P\, 
\mathscr T_P,  
$$
is independent of the choice of the base point, and $\ZZ$-bases for  $P \cap \ZZ^n$ as well as their completions to $\ZZ$-bases of $\ZZ^n\,.$ Accordingly, we choose $\{w_1,\dots, w_{p-1}\}$ a $\ZZ$-basis for $W \cap \ZZ^n$ and for each $P\in \{P_1,...,P_s\}$, we choose the inward primitive vector $v_P=v_P^{[W\rightarrow]}\in \Z^n$ pointing inward 
$P$ from the origin such that $\{w_1,\dots, w_{p-1}, v_P\}$ is a $\ZZ$-basis for $H_P \cap \ZZ^n$ where $H_P$ is the $p$-plane containing $P\,.$ Also, the balancing condition means (see Remark \ref{remark-higher-balancing}) that
every $p\times p$ minor of the $n \times p$ matrix of columns $\big(w_1,\dots,w_{p-1}, \sum_{P} m_P v_P \big)$ vanishes. Equivalently, this implies that under the projection along $W$, $h_W: \RR^n \rightarrow \RR^{n-p+1}$, we have 
\begin{equation}\label{balancing-with-proj}
\sum_{P} \, m_p\, h_{W}(v_P) =0. 
\end{equation}
\noindent
Furthermore, we extend each $\{w_1, \dots, w_{p-1}, v_P\}$ to 
$$D_P =\{w_1,\dots,w_{p-1}, v_P, u_1^P,\dots, u_{n-p}^P\}\,,$$
 a $\ZZ$-basis of $\ZZ^n$. 
Correspondingly, we define the open subset $S_{D_P}$ of $\Log^{-1}(H_P)$ by
\begin{equation}
\begin{split} 
& S_{D_P} = 
\big\{ ( \tau_1^{w_1} \star \dots\star \tau_{p-1}^{w_{p-1}} \star \tau^{v_P} \star 
\exp(2i\pi \theta_{p+1} u^P_1) \star \dots \star \exp(2i\pi \theta_n u^P_{n-p})~; \\
& (\tau_1,...,\tau_{p-1}, \tau)\in (\mathbb C^*)^{p},\  |\tau|>1,\ (\theta_{p+1},...,\theta_n) \in 
(\mathbb R/\mathbb Z)^{n-p}\big\}\,.  
\end{split} 
\end{equation}
Each $S_{D_P}$ (which is a $(n+p)$-dimensional real manifold) 
is injectively foliated over $(\mathbb R/\mathbb \Z)^{n-p}$ through the submersion 
\begin{gather*}\label{foliation} 
 \tau_1^{w_1} \star \dots 
\star \tau^{w_{p-1}} \star \tau^{v_P} \star 
\exp(2i\pi \theta_{p+1} u^P_1) \star \dots \star \exp(2i\pi \theta_n u^P_{n-p}) \in 
S_{D_P} 
\\
\xymatrix{\ar@{|->}[d]^{\sigma_{P}} & \\ &}\\
(\theta_{p+1}\, , \dots ,\theta_n) \in (\RR\slash \ZZ)^{n-p}\,.
\end{gather*}
We again denote the fiber over $(\theta_{p+1},\dots,\theta_n) \in (\mathbb{R}\slash \mathbb{Z})^{n-p}$, 
as $\Delta_{P}(\theta_{p+1},\dots,\theta_n)$. 
Note that the complex dimension of each $\Delta_{P}(\theta_{p+1},\dots,\theta_n)$ 
is $p$ and that the boundary of such a fiber is a real analytic $(2p-1)$-cycle. 
Denote by $\boldsymbol\tau_{P}$ the parameterization map 
\begin{align*}
&\boldsymbol \tau_{P} ~: 
(\tau, \theta_{p+1},\dots,\theta_{n}) \in (\mathbb C^*)^{p}\times (\RR\slash \ZZ)^{n-p}  
\mapsto  \\ & \tau_1^{w_1} 
\star \dots \tau_{p-1}^{w_{p-1}} \star 
\tau_p^{v_P} \star \exp(2i\pi \theta_{p+1} u^P_1) \star \dots \star \exp(2i\pi \theta_n u^P_{n-p})
\in (\mathbb C^*)^{n}. 
\end{align*}  
Identifying $\CC^{p-1}\times (\mathbb{R}\slash \mathbb{Z}) \simeq (\RR^+)^{p-1}\times (\mathbb{R}\slash \mathbb{Z})^{p}$, $\partial \Delta_{P}(\theta_{p+1},\dots,\theta_n)$ (with orientation induced by $-d\theta_p$) can be therefore understood as the image
\begin{multline}\label{tau_P}
\boldsymbol \tau_{P} \big((\RR^+)^{p-1}\times (\RR\slash \ZZ)^p \times \{(\theta_{p+1}, \dots, \theta_{n})\}\big)
\\=: \boldsymbol \tau_{P}^{(\theta_{p+1}, \dots, \theta_{n})} \big((\RR^+)^{p-1}\times (\RR\slash \ZZ)^p\big) .
\end{multline}
By definition of the tropical current associated to $V_{\mathbb T}$,
\begin{multline*}
\mathscr T_n^p (V_{\mathbb T}) =\!\! \sum\limits_{P\in C_p(V_\mathbb T)}\!\!\! m_P\,
 \mathscr T_{P} = \!\!\!\!\! \sum\limits_{P\in C_p(V_\mathbb T)}\!\!\! \!m_P\,\! \int_{(\theta_{p+1},...,\theta_n) 
\in (\mathbb R/\mathbb \Z)^{n-p}} \!\!\!\!\!
[\Delta_{P}(\theta_{p+1},...,\theta_n)]\, d\theta_{p+1} \dots d\theta_n\, .  
\end{multline*}

\begin{lemma}\label{mainlem-higherdim} 
Let $V_{\mathbb T}\subset \mathbb R^n$ be a $p$-tropical cycle such that 
$\mathcal C_{p-1}(V_\mathbb T) =\{W\}\ni \{0\}$. The current $\mathscr T_n^p(V_\mathbb T)$, which can then be decomposed as 
$$
\mathscr T_n^p (V_{\mathbb T}) = 
\sum\limits_{P\in C_p(V_\mathbb T)} m_P\, \int_{(\theta_{p+1},...,\theta_n) 
\in (\mathbb R/\mathbb \Z)^{n-p}} 
[\Delta_{P}(\theta_{p+1},...,\theta_n)]\, d\theta_{p+1} \dots d\theta_n
$$
is a closed $(p,p)$-dimensional current. Moreover, assume ${\rm card} (C_p(V_{\mathbb T})) = n-p+2$, with inward vectors $v_P$ in $P$, that make the balancing condition hold. If the set of vectors $\{h_W(v_P)\,;\, P\in C_p(V_{\mathbb T})\}$, $h_W$ being the projection along $W$, is linearly sub-independent, then $\mathscr T_n^p (V_{\mathbb T})$ is strongly extremal.  
\end{lemma}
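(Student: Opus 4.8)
The plan is to run the three–step argument of Lemma~\ref{mainlemma} fibered over the facet $W$. The conceptual point is that, projecting along $W$ via $h_W\colon\RR^n\to\RR^{n-p+1}$, the single–facet cycle $V_{\mathbb T}$ becomes a tropical curve in $\RR^{n-p+1}$ with one vertex at the origin whose $n-p+2$ primitive directions are the $h_W(v_P)$; the hypotheses $\mathrm{card}(C_p(V_{\mathbb T}))=n-p+2$ and sub-independency of $\{h_W(v_P)\}$ are precisely the strong extremality hypotheses of Lemma~\ref{mainlemma} in dimension $n-p+1$. The $p-1$ complex directions $\tau_1,\dots,\tau_{p-1}$ along $W$ and their associated torus characters play only the role of spectators, and for $p=1$ ($W$ a point, $h_W=\mathrm{id}$) the whole scheme collapses to Lemma~\ref{mainlemma}. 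For closedness I would first establish the exact analogue of Lemma~\ref{actionofT}: a Stokes computation expresses $\langle d\mathscr T_P,\omega\rangle$, for a test form $\omega$ supported near $\Log^{-1}(W)$, as an integral over the boundary cycle $\partial\Delta_P$, which lies in $\Log^{-1}(W)$. Because the $2(p-1)$ tangent directions coming from the $w_i$ are common to every $P$, the only transverse datum surviving in the coefficient is the class $h_W(v_P)$; summing with the weights $m_P$ and invoking the projected balancing relation $\sum_P m_P\,h_W(v_P)=0$ from~\eqref{balancing-with-proj} then yields $\langle d\mathscr T_n^p(V_{\mathbb T}),\omega\rangle=0$, exactly as in Lemma~\ref{closed-balanced}.

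For strong extremality, let $\widetilde{\mathscr T}$ be a normal closed $(p,p)$–current with $\mathrm{Supp}\,\widetilde{\mathscr T}=\Log^{-1}(V_{\mathbb T})$. For each cell $P$ set $\mathcal U_P:=\Log^{-1}\!\big(\RR^n\setminus\bigcup_{P'\neq P}|P'|\big)$; on $\mathcal U_P$ the support lies in the Cauchy–Riemann submanifold $\Log^{-1}(H_P)$ of CR–dimension $p$, foliated by the fibers $\Delta_P(\theta)$ of $\sigma_P$, so Theorem~\ref{supportdecom} provides a Radon measure $\mu_P$ on $(\RR/\ZZ)^{n-p}$ with $\widetilde{\mathscr T}|_{\mathcal U_P}=\int[\Delta_P(\theta)]\,d\mu_P(\theta)$. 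Extending each representation globally to a normal current $\widetilde{\mathscr T}_P$, the difference $\widetilde{\mathscr T}-\sum_P\widetilde{\mathscr T}_P$ is supported on $\Log^{-1}(W)$, which has CR–dimension $p-1<p$; Theorem~\ref{supportsmall} then kills it, giving the global decomposition $\widetilde{\mathscr T}=\sum_P\widetilde{\mathscr T}_P$.

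The heart of the argument is the Fourier analysis mimicking Steps~2 and~3 of Lemma~\ref{mainlemma}. Repeating the Stokes computation with $\mu_P$ in place of the Lebesgue measure, and testing against forms $\omega^{[j]}_\nu$ whose restriction to $\Log^{-1}(W)$ is $\chi_{n,\nu}\,dt_j$ with $\nu$ chosen so that $\langle\nu,w_i\rangle=0$ for all $i$ (making the $W$–modes trivial), I would obtain the vectorial identity $\sum_P\delta^0_{\langle\nu,v_P\rangle}\,\widehat\mu_P\big(-\langle\nu,u_1^P\rangle,\dots,-\langle\nu,u_{n-p}^P\rangle\big)\,h_W(v_P)=0$, the exact analogue of~\eqref{vectorial}. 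Taking $\nu=0$ and combining the projected balancing relation with sub-independency of $\{h_W(v_P)\}$ through Remark~\ref{remarkonsubindependency} forces $\widehat\mu_P(0)=\rho\,m_P$ for a single $\rho\in\CC$. For a fixed $P$ and a nonzero frequency $\ell=(\ell_1,\dots,\ell_{n-p})\in\ZZ^{n-p}$, the vector $\nu$ determined by $\langle\nu,w_i\rangle=0$, $\langle\nu,v_P\rangle=0$ and $\langle\nu,u_k^P\rangle=-\ell_k$ descends to a nonzero functional on $\RR^{n-p+1}$; since any $n-p+1$ of the $h_W(v_{P'})$ are independent, some $P'\neq P$ has $\langle\nu,v_{P'}\rangle\neq0$, and sub-independency then forces $\widehat\mu_P(\ell)=0$. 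Hence every $\mu_P$ equals $\rho\,m_P$ times Lebesgue measure and $\widetilde{\mathscr T}=\rho\,\mathscr T_n^p(V_{\mathbb T})$, the asserted strong extremality.

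The main obstacle I anticipate is Step~2: carrying out the boundary computation in the fibered setting and verifying rigorously that the $p-1$ spectator directions along $W$ decouple, so that (i) the measure furnished by Theorem~\ref{supportdecom} genuinely depends only on the $n-p$ fibre parameters, and (ii) only the projected class $h_W(v_P)$—rather than the full vector $v_P$—enters the scalar equations. The remaining inputs, namely the CR–dimension count for $\Log^{-1}(W)$ and the linear algebra of sub-independency, are then routine transcriptions of the $p=1$ case.
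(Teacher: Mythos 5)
Your strategy coincides with the paper's own proof: the same localization to open sets $\mathcal U_P$, the same decomposition $\widetilde{\mathscr T}=\sum_P\widetilde{\mathscr T}_P$ via Theorems \ref{supportdecom} and \ref{supportsmall}, the same Fourier analysis of the measures $\mu_P$, and the same two-case linear algebra ($\nu=0$ with Remark \ref{remarkonsubindependency} and the projected balancing relation \eqref{balancing-with-proj}; then $\nu=\nu_P\in W^\perp\cap\ZZ^n$ dual to the basis $D_P$ to kill the nonzero Fourier modes). Your packaging of the constraints as one vectorial identity in $h_W(v_P)$ is equivalent to the paper's family of scalar equations \eqref{comput2} indexed by $J$: the map $v\mapsto\big(\mathrm{Det}_J(w_1,\dots,w_{p-1},v)\big)_{|J|=p}$ is linear, vanishes exactly on the span of $W$, and hence factors injectively through $h_W$; so your "tropical curve in $\RR^{n-p+1}$" heuristic is faithfully realized by the paper's minors, and your closedness argument (balancing $\Leftrightarrow$ vanishing of all $p\times p$ minors of $(w_1,\dots,w_{p-1},\sum_P m_P v_P)$) is the paper's.

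The one step that, as written, would fail is the choice of test forms, and it is exactly the point you flag as your anticipated obstacle. For $p>1$ the current $d\widetilde{\mathscr T}_P$ has dimension $2p-1$, so it cannot be evaluated on forms whose restriction to $\Log^{-1}(W)$ is the $1$-form $\chi_{n,\nu}\,dt_j$: restriction of a form to a submanifold preserves its degree, so such a test form is genuinely a $1$-form and the pairing only typechecks when $p=1$. The paper closes this gap with explicit $(2p-1)$-test forms in polar coordinates $z_j=r_j e^{2i\pi t_j}$, namely $\omega^{[K,J]}_{\eta,\nu}=\eta(r)\,\chi_{n,\nu}(t)\,\bigwedge_{k\in K}dr_k\wedge\bigwedge_{j\in J}dt_j$ with $K\subset J\subset\{1,\dots,n\}$, $|K|=p-1$, $|J|=p$. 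The factor $\eta(r)\,\bigwedge_{k\in K}dr_k$ is what makes your $p-1$ "spectator" directions decouple: integrated along the radial directions of each fiber it contributes only a constant $M_{\eta,K,W}$ (normalizable to $1$), while the pullback of $\bigwedge_{j\in J}dt_j$ under the parameterization $\boldsymbol\tau_P$ produces precisely the coefficient $\mathrm{Det}_J(w_1,\dots,w_{p-1},v_P)$ together with the Kronecker deltas and the Fourier coefficient $\widehat\mu_P\big(-\langle\nu,u_1^P\rangle,\dots,-\langle\nu,u_{n-p}^P\rangle\big)$ of \eqref{comput2}. With these forms in place of your $\omega^{[j]}_\nu$, the rest of your outline goes through verbatim and reproduces the paper's proof.
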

\begin{proof}
The proof is similar to that of Lemmas \ref{closed-balanced} and \ref{mainlemma}. 
Consider a $(2p-1)$-test form $\omega_{\eta,\nu}^{[K,J]}$ in $\CC^*$ such that in a neighborhood of the $\Log^{-1}(0)\subset \Log^{-1}(W)$ is expressed in polar coordinates $z_j = r_j\, e^{2i\pi t_j}$, $j=1,...,n$, by 
\begin{equation*}
\begin{split} 
& \omega_{\eta,\nu}^{[K,J]}(r_1,...,r_{n},t_1,\dots, t_n)  
  = \, \eta (r_1,...,r_{n}) \, \chi_{n,\nu}(t_1,\dots,t_n) 
\bigwedge_{k\in K} dr_k  \wedge 
\bigwedge_{j\in J} dt_j,  
\end{split} 
\end{equation*} 
where $ K\subset J\subset \{1,...,n\}$ with $|K|=p-1\,, |J|= p\,.$ $\eta$ a test function in $r=(r_1,\dots,r_n)$. Also $\nu\in \Z^n$ and $\chi_{n,\nu}$ denotes as before the character $t=(t_1,\dots, t_n) \mapsto \exp(2i\pi \langle \nu,t\rangle)$ on 
the torus $(\mathbb R/\mathbb Z)^n$. Thanks to Fourier analysis, looking at the application of $\mathscr{T}_n^p(V_{\T})$ on these forms one can extract all information needed in order to verify closedness as well as extremality of $\mathscr{T}_n^p(V_{\T})$.

\vskip 2mm
By definition of the exterior derivative of a current and the Stokes' formula, taking into account the orientation induced on the boundary of each $\Delta_P(\theta_{p+1},\dots,\theta_{n})\,,$
\begin{equation}\label{comput1}  
\begin{split} 
& \big \langle d \mathscr T_n^p (V_{\mathbb T}),
\omega_{\eta,{\nu}}^{[K,J]} \big \rangle = 
\sum\limits_{P 
\in \mathcal C_p(V_{\mathbb T})}m_P \big \langle  \,d \mathscr T_P ,
\omega_{\eta,{\nu}}^{[K,J]} \big \rangle = \\ &
\int_{(\theta_{p+1},\dots, \theta_{n})\in( \RR\slash \ZZ)^{n-p}}
\sum\limits_{P 
\in \mathcal C_p(V_{\mathbb T})} 
m_P \big \langle \partial \Delta_P(\theta_{p+1},\dots, \theta_{n}), \omega_{\eta,{\nu}}^{[K,J]} \big \rangle\,.
\end{split} 
\end{equation} 
Therefore by (\ref{tau_P}), 
\begin{equation}\label{comput1-cont}
\begin{split}
& \big \langle d \mathscr T_n^p (V_{\mathbb T}), 
\omega_{\eta,{\nu}}^{[K,J]} \big \rangle = \\
&\int_{(\theta_{p+1},\dots, \theta_{n})\in( \RR\slash \ZZ)^{n-p}} 
 \sum\limits_{P 
\in \mathcal C_p(V_{\mathbb T})} 
m_P 
\Big(\int_{(\mathbb R^+)^{p-1}\times (\RR\slash\ZZ)^{p}} (\boldsymbol{\tau}_{P}^{(\theta_{p+1}, \dots, \theta_{n})})^* \big(\eta (r ) \, dr_K \wedge \chi_{n,\nu} (t) \, dt_J \big)\Big)\,.
\end{split} 
\end{equation} 
\noindent
A computation similar to Lemma \ref{actionofT} and (\ref{scalarequation11}) gives the scalar equation
\begin{equation}\label{comput-result}  
\begin{split} 
& \big \langle d 
\mathscr T_n^p (V_{\mathbb T}),
\omega_{\eta,{\nu}}^{[K,J]} \big \rangle = \sum\limits_{P 
\in \mathcal C_p(V_{\mathbb T})} 
m_P  \, \times\, M_{\eta, K, W } \times \\  & \times \, \Big( 
\prod_{\ell=1}^{p-1} \delta^0 _{\langle \nu,w_\ell\rangle}\Big) 
\delta^0_{\langle \nu,v_P\rangle} \, 
\delta^0_{\langle \nu,u_1^P\rangle}\, \cdots \, 
\delta^0_{\langle \nu,u_{n-p}^P\rangle}\, \textrm{Det}_J \big(w_1,\dots, w_{p-1}, v_P \big) .  
\end{split} 
\end{equation} 
In the above $M_{\eta, K, W }$ is a constant coming from integration of $\big(\boldsymbol{\tau}_{P}^{(\theta_{p+1}, \dots, \theta_{n})})^* \big(\eta (r ) \, dr_K\big)$ depending on $\eta, K, W\,$ which can be chosen to be $1$\,. $\textrm{Det}_J \big(w_1,\dots, w_{p-1}, v_P \big)$ denotes the $p\times p$ minor of the $n \times p$ matrix $\big(w_1,\dots, w_{p-1}, v_P \big)$ corresponding to the rows with indices $j\in J$, this term appears from $\big(\boldsymbol{\tau}_{P}^{(\theta_{p+1}, \dots, \theta_{n})})^*\big(dt_J\big)$ in (\ref{comput1-cont})  (compare to (\ref{tau-of-tj})). For any $0 \not=\, \nu \in \ZZ^n$, (\ref{comput-result}) becomes zero. Assuming $\nu = 0~,$ yields
\begin{multline*}
\big \langle d \mathscr T_n^p (V_{\mathbb T}), \omega_{\eta,{\nu}}^{[K,J]} \big \rangle =0
 \quad \text{ if and only if } \\ 
\qquad \qquad \qquad \qquad\qquad\qquad  \textrm{Det}_J\big(w_1,\dots, w_{p-1}, \sum_{P 
\in \mathcal C_p(V_{\mathbb T})} m_P\, v_P \big) =0, \,\\ \forall \, J \subset \{1,\dots,n\}, |J|=p\,. 
\end{multline*}
Which implies the equivalence of $d$-closedness of $\mathscr T_n^p (V_{\mathbb T})$ and the balancing condition. 
\vskip 2mm
For any $P\in \mathcal C_p(V_{\mathbb T})$, let $\mathcal U_P$ be the open subset of 
$(\mathbb C^*)^n$ defined 
as 
$$
\mathcal U_{P} := \Log^{-1} \Big( 
\mathbb R^n \setminus \bigcup\limits_{\stackrel{P'\in \mathcal C_p(V_{\mathbb T})}
{P'\not=P}} |P'|\Big)
$$
Suppose that $\widetilde {\mathscr T}$ is a $(p,p)$-dimensional normal current in $(\CC^*)^n$ 
with support exactly $\Log^{-1} (V_{\mathbb T})$. 
As in the $1$-dimensional case, by Theorem \ref{supportdecom}, for any $P\in \mathcal C_p(V_{\mathbb T})$, there exists a unique Radon measure 
$d\mu_P$  
on $(\mathbb R/\mathbb Z)^{n-p}$ such as (as currents in the open subset 
$\mathcal U_P \subset (\mathbb C^*)^n$) one has 
$$
\widetilde {\mathscr T}_{|\mathcal U_P} 
= \int_{(\theta_{p+1},...,\theta_n) \in (\mathbb R/\mathbb Z)^{n-p}} 
[\Delta_{P,D_P,W} (\theta_{p+1},...,\theta_n)] \, d\mu_P (\theta_{p+1},...,\theta_n). 
$$
Since $\widetilde {\mathscr T}_{|\mathcal U_P}$ extends globally as 
the normal closed current to the 
whole of $(\mathbb C^*)^n$, one defines normal currents $\widetilde {\mathscr T}_P$ 
on $(\C^*)^n$ by setting 
$$
\widetilde {\mathscr T}_P := \int_{(\theta_{p+1},...,\theta_n) \in (\mathbb R/\mathbb Z)^{n-p}} 
[\Delta_{P,D_P,W} (\theta_{p+1},...,\theta_n)] \, d\mu_P (\theta_{p+1},...,\theta_n). 
$$
The normal $(p,p)$-dimensional current $\widetilde {\mathscr T} - \sum_{P\in \mathcal C_p(V_{\mathbb T})} 
\widetilde {\mathscr T}_P$, which is supported by $\Log^{-1} (W)$, equals zero for dimension reasons 
thanks 
to theorem $\ref{supportsmall}$, so that one has (as currents in $(\C^*)^n$ this time) the 
representation (which indeed depends on the chosen foliation):
$$
\widetilde {\mathscr T} = 
\sum\limits_{P \in \mathcal C_p(V_{\mathbb T})} 
\int_{(\theta_{p+1},...,\theta_n) \in (\mathbb R/\mathbb Z)^{n-p}} 
[\Delta_{P,D_P,W} (\theta_{p+1},...,\theta_n)] \, d\mu_P (\theta_{p+1},...,\theta_n). 
$$
Using the fact that $\langle d\widetilde {\mathscr T},\omega_{\eta,\nu}^{[K,J]}\rangle =0$ for any 
$\nu\in \mathbb Z^n$, any $K\subset J \subset \{1,\dots,n\}$ with $|J|=|K|+1= p\,,$ any test function $\eta$ in $r$ with non-zero integral 
leads to 
\begin{multline}\label{comput2}  
\sum\limits_{P \in \mathcal C_p(V_{\mathbb T})} 
\Big( 
\prod_{\ell=1}^{p-1} \delta^0 _{\langle \nu,w_\ell\rangle}\Big) 
\delta^0_{\langle \nu,v_P\rangle}\, \widehat \mu_P \big( 
- \langle \nu,u_1^P\rangle,...,-\langle \nu, u_{n-p}^P\rangle\big)\, \text{Det}_J (w_1,...,w_{p-1}, v_P) = 0\,, \\ \quad  J \subset \{1,\dots,n\},\,  |J|=p\,. 
\end{multline}
Recall that by hypothesis the set $\{h_W(v_P)\,;\, P \in \mathcal C_p(V_{\mathbb T})\}$ is linearly sub-independent and spans $W^{\perp}$ as an $\RR$-basis, where $h_W$ is the projection along $W$. The balancing condition also gives $\sum_{P \in \mathcal C_p(V_\mathbb T)}m_P h_W(v_P) =0\,.$ 
\vskip 2mm
\noindent
Similar to the bottom of the proof of Lemma  \ref{mainlemma} we deduce~: 
\begin{itemize}
  \item Choosing $\nu=0$, together with sub-independency implies that there exists a complex number $\rho$ such that $\widehat \mu_P (0,...,0) =\rho\, m_P$ by the Remarks \ref{remarkonsubindependency} and \ref{balancing-with-proj} for every $P$.
  \item Now assume $(\ell_{p+1},...,\ell_n) \in \Z^{n-p} \,$ is any non-zero vector. Since for any $P,$ $\{w_1,...,w_{p-1},v_P,u_1^P,...,u_{n-p}^P\}$ is a lattice basis of $\Z^n$, there exists a unique $\nu_P\in \Z^n\cap W^{\perp}$ such that at the same time $\langle \nu_P,v_P\rangle =0,$ $\langle \nu_P,u_j^P\rangle =- \ell_{p+j}$ for $j=1,...,n-p\,$. However, for at least one $P' \not= P,$ $\langle \nu_P , v_{P'}\rangle \not= 0\, ,$  and therefore $\delta_{\langle \nu_P , v_{P'}\rangle}^0=0$ in (\ref{comput2}). The sub-independency thus implies that $\widehat \mu_P (\ell_{p+1},...,\ell_n)=~0$ for every $P$\,. 
\end{itemize}
Therefore $d\mu_P (\theta_{p+1} \dots \theta_n)= \rho\, m_P\, d\theta_{p+1} \dots d\theta_n$ for any 
$P\in \mathcal C_p(V_{\mathbb T})$. This proves the strong extremality 
of $\mathscr T_n^p (V_\mathbb T)$ and ends the proof of the lemma.    
\end{proof}
\vskip 2mm
\noindent
\begin{proof}[Proof of Theorem \ref{maintheohigherdim}]
Let $V_{\mathbb{T}}$ be a strongly extremal tropical $p$-cycle. 
Let $P$ be a $p$-dimensional cell of the tropical $p$-cycle 
$V_{\mathbb T}$. The current $\mathscr T_P$ defined (in the preliminaries 
of Section \ref{tropicalcurrents}) as 
$$
\mathscr T_P:= (T_{H_P})_{|\Log^{-1} (\rm{int}(P))}
$$
coincides with the current 
$$
\int_{(\theta_{p+1},...,\theta_n) \in (\mathbb R/\mathbb Z)^{n-p}} 
[\Delta_{P,D_P,W} (\theta_{p+1},...,\theta_n)] \, d\theta_{p+1} \dots d\theta_n 
$$
about any point (in $(\mathbb C^*)^n$) which belongs to the $(n+p)$-dimensional real submanifold 
$\Log^{-1} ({\rm int} (P))$, where ${\rm int} (P)$ denotes the relative interior of $P$ in the affine $p$-plane 
$H_P$ (the argument is again the same as the one which has been invoked in the discussion preceding the Lemma 
\ref{actionofT}). About any point $\Log^{-1} (a)$, where $a$ lies in the relative interior (in the affine $(p-1)$-plane $H_W$) 
of a given facet $W$ of $P$, the normal current 
$\sum\limits_{W\prec P'} \mathscr T_{P'}$ coincides with the current 
$$
\sum\limits_{W\prec P'} \int_{(\theta_{p+1},...,\theta_n) \in (\mathbb R/\mathbb Z)^{n-p}} 
[\Delta_{P',D_{P'},W} (\theta_{p+1},...,\theta_n)] \, d\theta_{p+1} \dots d\theta_n, 
$$
By Theorem \ref{supportsmall}.
Since closedness of a current can be tested locally, it follows from the argument developed 
in the proof of Lemma \ref{mainlem-higherdim} that the current 
$$\mathscr T_n^p(V_{\mathbb T}) = \sum_{P \in \, \mathcal{C}_p(V_{\T})} \,m_P\mathscr{T}_P$$ 
is closed in a any compact neighborhood of any point $\Log^{-1}(a)$ in $\Log^{-1}({\rm int} (W))$, $W$ being an arbitrary facet 
of $P$, which in turn implies the closedness of $\mathscr{T}_n^p(V_{\T})$, noting that in light of Theorem \ref{supportsmall} we need not to check the closedness for faces of codimension higher that $1$. Suppose now that the for each facet $W\in \mathcal C_{p-1}(V_{\mathbb T})$, 
for each $P\in \mathcal C_p(V_\mathbb T)$ that 
shares $W$ as a facet, the projection along $W$ of primitive vectors $v_P^{[\rightarrow W]}$ 
form a linearly sub-independent set with cardinality $n-p+2$. 
Let $\widetilde{\mathscr T}\in \mathcal{D}'_{p,p}((\CC^*)^n)$ be a normal closed 
current with support $\Log^{-1}(V_\mathbb T)$. If $W\in \mathcal C_{p-1} (V_\mathbb T)$ and $a$ is a point in the relative interior of $W$ in $H_W$, the argument used in the proof of Lemma \ref{mainlem-higherdim} also shows that there exists some complex number 
$\rho_{W,a}$ such that, in neighborhood of $\Log^{-1}(a)$ in $(\C^*)^n$, one has 
$\widetilde {\mathscr T} = \rho_{W,a} \mathscr T_n^p(V_\mathbb T)$. Obviously, all $\rho_{W,a}$ (for $a$ in the relative interior of 
an arbitrary facet $W$ of $V_\mathbb T$) are equal to some complex number  
$\rho_W$. This implies that $\mathds{1}_{\Log^{-1}(\textrm{int} P)}\, \widetilde {\mathscr T} = \rho_{W}\, m_P \, \mathscr T_P\,.$ If $W'\not=W$ is another facet of $P$ we find a complex number $\rho_{W'}$ such that
$$\mathds{1}_{\Log^{-1}(\textrm{int} P)}\, \widetilde {\mathscr T} = \rho_{W'}\, m_P \, \mathscr T_P \,,$$
and $\rho_W = \rho_{W'}\,$ is imposed. Connectivity of $V_{\T}$ in codimension $1$ (as in the final step in the proof of Theorem \ref{maintheorem}) 
shows that all numbers $\rho_W$ ($W\in \mathcal C_{p-1} (V_\mathbb T)$) coincide (note that higher codimensional connectivity is not sufficient). This concludes the proof of the strong extremality of the current $\mathscr T_n^p(V_\mathbb T)$.            
\end{proof}

\section{Tropical currents in $\mathcal{D}'_{p,p}(\mathbb{CP}^n$)}

We first show that for a given effective tropical $p$-cycle $V_\mathbb T$ in $\RR^n$ the closed positive $(p,p)$-dimensional current $\mathscr T_n^p(V_\mathbb{T})$ (considered as a current in $(\C^*)^n$) can be extended by zero to a closed positive $(p,p)$-dimensional current in  $\mathbb{CP}^n$. 

\begin{lemma}\label{lemmaextension}
For any effective tropical $p$-cycle $V_{\mathbb{T}}$ in $\RR^n$, the positive tropical current $\mathscr T_n^p(V_{\mathbb{T}})\in \mathcal{D}'_{p,p}((\CC^*)^n)$ can be extended by zero to $\mathbb {CP}^n$ as a current $\bar{\mathscr T}_n^p (V_\mathbb T)$ in $\mathcal{D}'_{p,p}(\mathbb{CP}^n)$. Moreover, if $T \in \mathcal E^p((\CC^*)^n)$, then also 
$\bar{\mathscr T}_n^p (V_\mathbb T)\in \mathcal{E}^p(\mathbb{CP}^n)$.
\end{lemma}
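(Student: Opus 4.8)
The plan is to establish the two assertions in turn: first the existence of the trivial (extension-by-zero) to $\mathbb{CP}^n$, obtained from a finite-mass estimate together with the Skoda--El Mir extension theorem; then the persistence of extremality, obtained from a short positivity argument. Throughout write $A := \mathbb{CP}^n \setminus (\CC^*)^n$, which in homogeneous coordinates $[Z_0:\dots:Z_n]$ is the union of the $n+1$ hyperplanes $\{Z_i=0\}$, hence a closed complete pluripolar subset of $\mathbb{CP}^n$.

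\textbf{Extension by zero.} By the Skoda--El Mir theorem (see \cite{Dembook}) it suffices to show that the closed positive current $\mathscr T_n^p(V_{\mathbb T})$, viewed on $(\CC^*)^n = \mathbb{CP}^n\setminus A$, has locally finite mass in a neighbourhood of $A$; its trivial extension $\bar{\mathscr T}_n^p(V_{\mathbb T})$ is then automatically closed and strongly positive on $\mathbb{CP}^n$. To produce the mass bound I would argue cell by cell. For each $P\in \mathcal C_p(V_{\mathbb T})$ one has by construction $\mathscr T_P = \mathds{1}_{\Log^{-1}(\mathrm{int}\,P)}\,T_{H_{a_P}} \le T_{H_{a_P}} = \int_\theta [\Delta_P(\theta)]\,d\theta$ as positive currents, the integral being over $\theta\in(\RR/\ZZ)^{n-p}$. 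Each $\Delta_P(\theta)$ is the image of a monomial map $(\CC^*)^p\to(\CC^*)^n$ (composed with a fixed translation by $\exp(a_P)\star\gamma(\theta)$), so its closure $\overline{\Delta_P(\theta)}$ in $\mathbb{CP}^n$ is an irreducible algebraic subvariety of dimension $p$, and $[\Delta_P(\theta)]$ is exactly the restriction of $[\overline{\Delta_P(\theta)}]$ to $(\CC^*)^n$.

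\textbf{Key geometric input.} The decisive observation is that these fibres are mutually isometric. Since $\Delta_P(\theta)=\gamma(\theta)\star\Delta_P(0)$ with $\gamma(\theta)=e^{2i\pi(\theta_{p+1}u_1^P+\dots+\theta_n u_{n-p}^P)}\in(\mathbb S^1)^n$, and multiplication by an element of $(\mathbb S^1)^n$ extends to the action of a diagonal element of $U(n+1)$ on $\mathbb{CP}^n$, each $\overline{\Delta_P(\theta)}$ is the image of $\overline{\Delta_P(0)}$ under an isometry of $(\mathbb{CP}^n,\omega_{FS})$. Hence they all carry the same Fubini--Study degree $d_P := \int_{\overline{\Delta_P(0)}}\omega_{FS}^p<\infty$, and the total mass of $\int_\theta[\overline{\Delta_P(\theta)}]\,d\theta$ on $\mathbb{CP}^n$ equals $\int_\theta d_P\,d\theta = d_P$. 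Consequently the mass of $\mathscr T_P$ on all of $(\CC^*)^n$ is at most $d_P$, and since $V_{\mathbb T}$ is effective ($m_P>0$) the total mass of $\mathscr T_n^p(V_{\mathbb T})=\sum_P m_P\,\mathscr T_P$ is bounded by $\sum_P m_P\,d_P<\infty$. This finite total mass a fortiori gives locally finite mass near $A$, so Skoda--El Mir applies and delivers $\bar{\mathscr T}_n^p(V_{\mathbb T})$.

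\textbf{Persistence of extremality.} Assume $\mathscr T_n^p(V_{\mathbb T})\in\mathcal E^p((\CC^*)^n)$ and suppose $\bar{\mathscr T}_n^p(V_{\mathbb T})=T_1+T_2$ with $T_1,T_2\in SPC^p(\mathbb{CP}^n)$. Restricting to the open set $(\CC^*)^n$ gives a decomposition $\mathscr T_n^p(V_{\mathbb T})=T_1|_{(\CC^*)^n}+T_2|_{(\CC^*)^n}$ inside $SPC^p((\CC^*)^n)$, so extremality there yields $\lambda_i\ge 0$ with $T_i|_{(\CC^*)^n}=\lambda_i\,\mathscr T_n^p(V_{\mathbb T})$ and $\lambda_1+\lambda_2=1$. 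Applying the Skoda--El Mir decomposition to each $T_i$ relative to $A$, one writes $T_i=\mathds{1}_{(\CC^*)^n}T_i+\mathds{1}_A T_i$ with both summands closed and positive; the first is the trivial extension of $T_i|_{(\CC^*)^n}=\lambda_i\,\mathscr T_n^p(V_{\mathbb T})$, which by uniqueness of the trivial extension is $\lambda_i\,\bar{\mathscr T}_n^p(V_{\mathbb T})$. Hence $R_i:=\mathds{1}_A T_i = T_i-\lambda_i\,\bar{\mathscr T}_n^p(V_{\mathbb T})\ge 0$. Summing and using $\lambda_1+\lambda_2=1$ gives $\bar{\mathscr T}_n^p(V_{\mathbb T})=\bar{\mathscr T}_n^p(V_{\mathbb T})+R_1+R_2$, whence $R_1+R_2=0$; as both are positive, $R_1=R_2=0$ and $T_i=\lambda_i\,\bar{\mathscr T}_n^p(V_{\mathbb T})$, proving $\bar{\mathscr T}_n^p(V_{\mathbb T})\in\mathcal E^p(\mathbb{CP}^n)$. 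I would note that this step does \emph{not} need a support-theorem analysis of the residual currents $R_i$ on the hyperplanes of $A$; the positivity cancellation does all the work.

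\textbf{Main obstacle.} The only genuinely technical point is the finite-mass estimate of the first part: everything rests on dominating the restricted cell currents $\mathscr T_P$ by the honest algebraic cycles $\overline{\Delta_P(\theta)}$ and on the $(\mathbb S^1)^n$-invariance of $\omega_{FS}$, which forces the degree $d_P$ to be independent of $\theta$ and thus keeps $\int_\theta[\overline{\Delta_P(\theta)}]\,d\theta$ of finite total mass. The two invocations of Skoda--El Mir and the final positivity argument are then formal.
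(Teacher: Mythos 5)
Your proof is correct. For the extension-by-zero part it follows the paper's own route: the paper likewise reduces to the averaged representation over the fibres $\Delta_P(\theta)$, observes that their Zariski closures in $\mathbb{CP}^n$ are algebraic subvarieties of degree bounded independently of $\theta$ (in the paper this is read off from the defining binomial equations, whose exponents $\xi_\ell$ are fixed and only whose unimodular coefficients $\gamma_j(\theta)$ vary; your observation that the $(\mathbb S^1)^n$-action extends to diagonal unitaries of $\mathbb{CP}^n$ is the same fact in geometric form), deduces locally finite mass near the coordinate hyperplanes, and invokes Skoda--El Mir. Where you genuinely differ is the extremality assertion: the paper disposes of it in a single sentence, claiming it follows from the fact that $\bar{\mathscr T}_n^p(V_{\mathbb T})$ and $\mathscr T_n^p(V_{\mathbb T})$ agree on the dense open set $(\CC^*)^n$ and that the support of the extension is the closure of the support. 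Your argument --- restrict a decomposition $T_1+T_2$ to the torus, apply extremality there to get $T_i|_{(\CC^*)^n}=\lambda_i\,\mathscr T_n^p(V_{\mathbb T})$ with $\lambda_1+\lambda_2=1$, then split $T_i=\mathds{1}_{(\CC^*)^n}T_i+\mathds{1}_A T_i$ and kill the residual pieces by the cancellation $R_1+R_2=0$ --- is a complete justification of that claim, and is in fact more careful than the paper's, since a priori a decomposition on $\mathbb{CP}^n$ could deposit extra mass on the part of the support lying in the hyperplanes at infinity, and positivity (rather than any support theorem) is what rules this out. One small simplification: closedness of the summands $\mathds{1}_A T_i$, which is the part requiring El Mir's theorem, is never used in your cancellation; positivity of $\mathds{1}_A T_i$, immediate for currents of order zero, suffices.
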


\begin{proof}
Let $(\zeta_1,\dots,\zeta_n)$ be the coordinates on the complex torus $(\CC^*)^n$. 
Assume $P\in \mathcal C_p(V_\mathbb T)$, and without loss of generality that $0 \in \textrm{int} P\,.$ The current $T_{H_P}$ is expressed as the average 
$$
T_{H_P} = T_{H_P,D_P} = \int_{(\theta_{p+1},...,\theta_n) 
\in (\mathbb R/\mathbb Z)^{n-p}} 
[\Delta_{H_P,D_P} (\theta_{p+1},...,\theta_n)]\, d\theta_{p+1} \dots d\theta_n 
$$
(see formula \eqref{definitionofthecurrent}). 
For each $(\theta_{p+1},...,\theta_n) \in (\mathbb \R/\mathbb \Z)^{n-p}$, 
the complex $p$-dimensional analytic variety $\Delta_{H_P,D_P}(\theta_{p+1},...,\theta_n)$ is 
included in the toric subset of $(\mathbb C^*)^n$ defined in the coordinates 
$(\zeta_1,...,\zeta_n)$ by the set of binomial equations 
$$    
\prod_{j=1}^n \zeta_j^{\xi^+_{\ell,j}} - 
\prod\limits_{j=1}^n (\gamma_j(\theta,U_P,a))^{\xi_{\ell,j}}\,  \zeta_j^{\xi^-_{\ell,j}} =0,\quad \ell =1,...,M_{D_P},  
$$
where the $\xi_\ell = \xi_\ell^+ - \xi_\ell^-$ form a set of generators for 
${\rm Ker}_{B_P^t} \cap \mathbb \Z^n$ and 
$$
\gamma_j(\theta,U_P,a) = \exp \big(2i\pi (\theta_{p+1} u_{1,j}^P + \dots + 
\theta_n u_{n-p,j}^P)\big) \in \{\zeta \in \C^*\,;\, |\zeta|=1\},\quad j=1,...,n. 
$$ 
Each integration current 
$\Delta_{H_P,D_P}(\theta_{p+1},...,\theta_n)$ can then be extended to 
$\C \mathbb P^n$ as the integration of the Zariski closure (in $\C \mathbb P^n$) 
of the toric subset 
$$
\Big\{(\zeta_1,...,\zeta_n) 
\in (\mathbb C^*)^n\,;\, 
\prod_{j=1}^n \zeta_j^{\xi^+_{\ell,j}} - 
\prod\limits_{j=1}^n (\gamma_j(\theta,U_P,a))^{\xi_{\ell,j}}\, \zeta_j^{\xi^-_{\ell,j}} =0 \Big\}.  
$$
Since the degree of this projective algebraic variety is bounded independently of 
$(\theta_{p+1},...,\theta_n)$, the current $\mathscr T_n^p(V_\mathbb T)$ has finite mass 
about any point in $\C \mathbb P^n \setminus (\C^*)^n$.    
By the extension theorem of Skoda-El~Mir (see \cite{Dembook}, page 138), 
this current can then be trivially extended by $0$ as a positive $(p,p)$-dimensional closed 
current on $\mathbb{CP}^n$. The last assertion follows from the fact that $\mathscr T_n^p(V_\mathbb T)$ 
and $\bar{\mathscr T}_n^p(V_\mathbb T)$ have the same support in the dense open subset $(\CC^*)^n \subset \mathbb{CP}^n$\, and support of $\bar{\mathscr T}_n^p(V_\mathbb T)$ is the closure of support of ${\mathscr T}_n^p(V_\mathbb T)$ in $\mathbb{CP}^n$. 
\end{proof}

Let $d=d' + d''$ the usual decomposition of the de Rham (exterior) derivative and $d^c= (d' - d '')/(2i\pi)$, so 
that $dd^c =(1/i\pi)\, d''d'$. The following theorem gives a simpler representation for tropical currents of bidimension $(n-1,n-1)$ (equivalently of bidegree $(1,1)$). 

\begin{theorem}\label{thm-hyper-ddc}
Positive tropical currents of bidimension $(n-1,n-1)$ in $(\CC^*)^n$ (resp. their extension by zero to $\mathbb{CP}^n$) are exactly the currents of the form $dd^c [p\circ \Log]$, where $p$ is a tropical polynomial on $\RR^n$ (resp. $p$ is a homogeneous tropical polynomial on $\mathbb{TP}^n$).
\end{theorem}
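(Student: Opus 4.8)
The plan is to reduce the entire statement to the single identity
$$
dd^c[p\circ\Log] \;=\; \mathscr{T}_n^{n-1}\big(V_{\mathbb{T}}(p)\big)
$$
valid for every tropical (resp. homogeneous tropical) polynomial $p$. Both inclusions then follow formally: a positive tropical current of bidimension $(n-1,n-1)$ is by definition $\mathscr{T}_n^{n-1}(V_{\mathbb{T}})$ for an effective tropical hypersurface $V_{\mathbb{T}}$, which by Theorem \ref{tropicalhyp} equals $V_{\mathbb{T}}(p)$ for some tropical polynomial $p$; and conversely each tropical polynomial $p$ produces the effective hypersurface $V_{\mathbb{T}}(p)$. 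To set up the identity I would first rewrite, using $\alpha_i\log|z_i|=\log|z_i^{\alpha_i}|$,
$$
(p\circ\Log)(z)=\max_{\alpha}\big\{c_\alpha+\langle\alpha,\Log z\rangle\big\}=\log\max_{\alpha}\big|e^{c_\alpha}z^{\alpha}\big|.
$$
This exhibits $p\circ\Log$ as a finite maximum of the pluriharmonic functions $\log|e^{c_\alpha}z^{\alpha}|$, hence as a plurisubharmonic function on $(\CC^*)^n$, so $dd^c[p\circ\Log]$ is a positive closed $(1,1)$-current, i.e. of bidimension $(n-1,n-1)$.

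Next I would localize the support. On each open region of $\RR^n$ where a single monomial $c_\alpha+\langle\alpha,x\rangle$ strictly dominates, $p\circ\Log$ agrees with the pluriharmonic function $\log|e^{c_\alpha}z^{\alpha}|$, so $dd^c[p\circ\Log]=0$ there. Hence $\mathrm{Supp}\,dd^c[p\circ\Log]\subset\Log^{-1}(V_{\mathbb{T}}(p))$, which is precisely the support of $\mathscr{T}_n^{n-1}(V_{\mathbb{T}}(p))$.

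The core of the argument is a local computation over the relative interior of each top-dimensional cell $P$ of $V_{\mathbb{T}}(p)$. Along $\textrm{int}\,P$ exactly the monomials whose exponents lie on one line, through the two extreme exponents $\alpha'$ and $\alpha''$, are equal and maximal; writing $\beta=\alpha''-\alpha'=w(P)\,\beta_0$ with $\beta_0$ primitive and $w(P)$ equal to the lattice length (the weight $m_P$ attached to $P$), I would factor off a pluriharmonic summand and then apply an invertible monomial change of coordinates sending $\beta_0$ to $e_n$, together with a translation absorbing the constants. This reduces the computation to $dd^c$ of a convex piecewise-linear function of $\log|z_n|$ alone, whose Riesz measure is carried by a single circle $\{|z_n|=\mathrm{const}\}$ with total mass equal to the total slope jump $w(P)$; concretely $dd^c\log^+|z_n^{\,w}|=w\,dd^c\log^+|z_n|=w\,T_{\{x_n=0\}}$, giving locally $dd^c[p\circ\Log]=w(P)\,\mathscr{T}_P$. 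Identifying this distributional Monge--Amp\`ere mass with the weighted averaged integration current $w(P)\,T_{H_P}$, and checking that the $dd^c$-weight coincides with the combinatorial lattice-length weight, is the step I expect to be the main obstacle.

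Finally I would glue. Both currents are normal (being closed and positive) and, by the previous step, agree on the open set $\Log^{-1}\big(\bigcup_P\textrm{int}\,P\big)$; their difference is therefore a normal $(n-1,n-1)$-current supported on $\Log^{-1}$ of the $(n-2)$-dimensional skeleton, a Cauchy--Riemann submanifold of Cauchy--Riemann dimension $n-2<n-1$. Theorem \ref{supportsmall} then forces the difference to vanish, establishing the identity on $(\CC^*)^n$. For the projective version I would pass to $\mathbb{CP}^n$ through Lemma \ref{lemmaextension}, using that homogeneous tropical polynomials correspond, by the projective form of Theorem \ref{tropicalhyp}, to effective hypersurfaces in $\mathbb{TP}^n$, and that the extension by zero of $dd^c[p\circ\Log]$ is exactly the $dd^c$ of the associated global potential on $\mathbb{CP}^n$.
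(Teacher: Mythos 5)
Your proposal is correct, and it reaches the key identity $dd^c[p\circ\Log]=\mathscr{T}_n^{n-1}(V_{\mathbb{T}}(p))$ by a genuinely different route than the paper. You compute the identity directly and locally: near $\Log^{-1}(a)$, for $a$ in the relative interior of a facet $P$, the maximizing monomials have collinear exponents and are all equal along $P$, so after splitting off a pluriharmonic term and applying a monomial (lattice) change of coordinates the potential becomes $w(P)\log^{+}|z_n|$ up to translation, whose $dd^c$ is the Haar average of the circles $\{z_n=e^{2i\pi\theta}\}$, i.e.\ exactly $w(P)\,\mathscr{T}_P$; the constant comes out for free since the slope jump across the facet equals the lattice-length weight by the very definition of the weights of $V_{\mathbb{T}}(p)$ (this is the step you flagged as the main obstacle, and it does go through). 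The paper never computes $dd^c$ of the local model. Instead it observes that the two currents share the same support, perturbs $p$ to $p_k=\max\{p(x),\langle\xi',x\rangle-N_k\}$ so that $V_{\mathbb{T}}(p_k)$ is trivalent and hence strongly extremal, invokes Theorem \ref{maintheohigherdim} to conclude that the closed positive (hence normal) current $dd^c[p_k\circ\Log]$ must be proportional to $\mathscr{T}_n^{n-1}(V_{\mathbb{T}}(p_k))$, identifies all the constants by an exhaustion of $\mathbb{R}^n$, and finally pins the constant to $1$ by a total-mass computation in $\mathbb{CP}^n$ (Crofton's formula for the degrees of the toric fibers together with the approximation of $p\circ\Log$ by $\frac{m_\xi}{m}\log\big|\prod_j\zeta_j^{m\xi_j}+1\big|$, yielding \eqref{totalmass}). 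Both arguments share the outer structure: localization to facet interiors, gluing across the $(n-2)$-skeleton via Theorem \ref{supportsmall} (the paper uses it with the same level of rigor you do), and the reduction of both inclusions of the statement to the identity via Theorem \ref{tropicalhyp} and homogenization together with Lemma \ref{lemmaextension}. What each approach buys: yours is more elementary and self-contained, requiring neither the extremality machinery nor any normalization step, and it only needs the support inclusion $\mathrm{Supp}\,dd^c[p\circ\Log]\subset\Log^{-1}(V_{\mathbb{T}}(p))$ rather than exact support equality (which the paper's extremality argument does require); the paper's route reuses its main theorem and produces, as a by-product, the mass/degree identity \eqref{totalmass}, which ties the construction to projective degrees and is also what it uses to control the extension to $\mathbb{CP}^n$.
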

\begin{proof}  
Observe that for the given tropical polynomial $p:\RR^n\rightarrow \RR$, the two positive closed $(n-1,n-1)$-dimensional currents $dd^c[p \circ \Log]$ and $\mathscr{T}^{n-1}_n(V_{\mathbb{T}}(p))$ share the same support, 
$\Log^{-1} (V_\mathbb T (p)),$ in $(\mathbb C^*)^n$. In order to show they coincide in $(\mathbb C^*)^n$, 
it is enough to prove they coincide in the open subset $\Log^{-1} \big(\R^n \setminus 
\bigcup_{\tau \in \mathcal C_{n-2} (V_\mathbb T)} |\tau|\big)$ (then they  
coincide in the whole $(\mathbb C^*)^n$ for dimensional reasons thanks to theorem 
\ref{supportsmall}). Since equality of currents can be tested locally, it is even enough 
to test such an equality in a neighborhood of $\Log^{-1}(a)$, where $a$ is an arbitrary point 
in the relative interior of a $(n-1)$-dimensional cell $P$ of the tropical hypersurface 
$V_\mathbb T(p)$. By a translation in $\mathbb R^n$, one can then assume that $p(x)= \max\{ \langle \alpha, x\rangle\,, 0 \}$, 
where $\alpha\in \mathbb Z^n\setminus \{(0,...,0)\}$. 
Let $\alpha = m_\xi\, \xi$, where $\xi$ is a primitive vector in 
$\mathbb Z^n$ and $m_{\xi}\in \mathbb N^*$. Let $B:=\{w_1,...,w_{n-1}\}$ be a 
$\Z$-basis for $H_P \cap \mathbb Z^n$ and consider a completion 
$D_B:=\{w_1,...,w_{n-1},u\}$ of $B$ as in the preliminaries of Section \ref{tropicalcurrents}. 
For each $\theta \in (\mathbb R/\mathbb Z)$, the toric set 
$\Delta_{H_P,D_B}(\theta)$ is the $(n-1)$-dimensional (reduced) toric hypersurface 
in $(\mathbb C^*)^n$ defined by the irreducible binomial 
$\prod_{j=1}^n \zeta_j^{\xi_j^+} - \gamma_u(\theta) 
\prod_{j=1}^n \zeta_j^{\xi_j^-}$ for some 
$\gamma_u (\theta)\in \mathbb S^1$ (see Remark \ref{remarktoricset}). 

Let $\overline {\Delta_{H_P,D_B}(\theta)}$ ($\theta \in \mathbb R/\mathbb Z$) be the 
Zariski closure of the hypersurface $\Delta_{H_P,D_B}(\theta)$ in 
$\mathbb C\mathbb P^n$, which is in fact the zero set in $\mathbb{CP}^n$ of homogenization of the above equation. 
It follows from Crofton's formula (see \cite{Dembook}, page 170, or Example (4.6) in \cite{Dem-gaz}) that  
$$
\deg \big(\overline{\Delta_{H_P,D_B} (\theta)}\big) = 
\max \big\{\sum\limits_{j=1}^n \xi_j^+, 
\sum\limits_{j=1}^n \xi_j^-\big\} =
\int_{\mathbb C \mathbb P^n}
\big[\overline {\Delta_{H_P,D_B}(\theta)}\big] 
\wedge \omega^{n-1}
$$
where $\omega$ denotes the K\"ahler form $\omega = dd^c \log \|\ \|$ in 
$\mathbb C\mathbb P^n$. 
On the other hand, it is easy to see that, in the weak sense of currents 
in $(\mathbb C^*)^n$, 
$$
\lim\limits_{m\rightarrow \infty} 
\frac{m_\xi}{m} \log \big| \prod_{j=1}^n \zeta_j^{m\xi_j} + 1\big| = p\circ \Log,  
$$  
which implies, taking $dd^c$, 
$$
\lim\limits_{m\rightarrow \infty} 
\frac{m_\xi}{m}\, dd^c \big[\log \big| \prod_{j=1}^n \zeta_j^{m\xi_j} + 1\big|\big] = 
dd^c\,[p\circ \Log]. 
$$ 
It follows that, if one denotes as $\overline{dd^c [p\circ \Log]}$ the trivial extension by $0$ of the 
positive closed $(n-1,n-1)$-dimensional current $dd^c [p\circ \Log]$ from 
$(\mathbb C^*)^n$ to $\mathbb C \mathbb P^n$, one has  
\begin{multline}\label{totalmass} 
\int_{\mathbb C\mathbb P^n} \overline{dd^c [p\circ \Log]} \wedge \omega^{n-1} = \\ 
\int_{\mathbb C\mathbb P^n} 
\Big[\int_{\mathbb R/\mathbb Z} 
\big[\overline {\Delta_{H_P,D_B}(\theta)}\big]\, d\theta\Big] 
\wedge \omega^{n-1} = \max \big\{\sum\limits_{j=1}^n \xi_j^+, 
\sum\limits_{j=1}^n \xi_j^-\big\}. 
\end{multline} 
Chose now $\xi' \in \mathbb Z^n\setminus \{(0,...,0)\}$ 
and a strictly increasing sequence $(N_k)_{k\geq 1}$ of 
positive integers such that all tropical $(n-1,n-1)$-hypersurfaces $V_\mathbb T (p_k)$, where 
$$
p_k~: x \in \mathbb R^n \mapsto \max \big\{p(x), \langle \xi',x \rangle - N_k\big\} 
= \max \big\{\langle \xi,x\rangle,0, 
\langle \xi',x\rangle - N_k\big\},\quad k\in \mathbb N^*,  
$$
are trivalent. For any relatively compact open subset $\mathcal V\subset \mathbb R^n$, 
$p\equiv p_k$ in $\mathcal V$ and the currents 
$dd^c [p\circ \Log]$ and $dd^c [p_k\circ \Log]$ coincide in $\Log^{-1} (\mathcal V)$ 
provided $k$ is large enough (depending on $\mathcal V$). Since the current 
$\mathscr T_n^{n-1} (V_{\mathbb T}(p_k))$ is extremal 
in $(\mathbb C^*)^n$ thanks to Theorem \ref{maintheohigherdim} ($p=n-1$), there exists, for each 
such $\mathcal V\subset \mathbb R^n$ and for any $k>>1$ 
large enough (depending on $\mathcal V$), a strictly positive constant 
$\rho_{\mathcal V,k}$ such that one has 
\begin{equation} 
\begin{split} 
& \big(\mathscr T_n^{n-1} (V_{\mathbb T} (p))\big)_{|\Log^{-1} (\mathcal V)}  
=\big(\mathscr T_n^{n-1} (V_{\mathbb T} (p_k))\big)_{|\Log^{-1} (\mathcal V)} = \\
& = \rho_{\mathcal V,k}\,  
\big(dd^c [p_k \circ \Log]\big)_{|\Log^{-1} (\mathcal V)}  
= \rho_{\mathcal V,k}\,  
\big(dd^c [p\circ \Log]\big)_{|\Log^{-1} (\mathcal V)}\,. 
\end{split} 
\end{equation} 
Taking an exhaustion of $\mathbb R^n$ with relatively open subsets 
$\mathcal V_\ell$, $\ell =1,2,...$, such that $\mathcal V_\ell \subset \mathcal V_{\ell +1}$ for any $\ell\in \mathbb N^*$, 
it follows that all $\rho_{\mathcal V,k}$ are equal,  so that there exists some strictly positive constant $\rho$ such that 
$$
\mathscr T_n^{n-1} (V_\mathbb T (p)) = \rho\, dd^c\, [p\circ \Log] 
$$
(as currents in $(\mathbb \C^*)^n$). The fact that the normalization constant 
$\rho$ equals $1$ follows from \eqref{totalmass} since 
$$
\mathscr T_n^{n-1} (V_\mathbb T (p)) = 
\int_{\theta \in (\mathbb R/\mathbb Z)} 
\big[ \Delta_{H_P,D_B} (\theta)]\, d\theta 
$$
(so that the trivial extensions of $\mathscr T_n^{n-1} (V_\mathbb T (p))$ and $dd^c \, [p\, \circ \Log]$ 
to $\mathbb C\mathbb P^n$ share the same total mass as currents in the projective space 
$\mathbb C\mathbb P^n$ equipped with its Fubini-Study K\"ahler form). 
\vskip 2mm
Regarding the statement for the homogeneous tropical polynomials, observe that extension by zero of $dd^c \,[{p} \circ \Log]$ to $\mathbb{CP}^n$, in the sense of currents, is exactly $dd^c~[\tilde{p}~\circ~\Log]\,$, where $\tilde{p}$ is the  homogenization of $p$\,.
\vskip 2mm
For the converse statement, just note that by Theorem \ref{tropicalhyp}, every tropical hypersurface $V_{\T}$ can be understood as $V_{\T}(p)$ for a tropical polynomial $p$, with equality of respective weights. 
\end{proof}

By previous theorem and Theorem \ref{maintheohigherdim} one readily has the following.  

\begin{corollary}\label{hypersurfacecase}
Let $p$ be a homogeneous tropical polynomial defining a tropical hypersurface in $\mathbb{TP}^n$. 
Then, the positive current $dd^c \, [p \circ \Log]$ is in $\mathcal{E}^{n-1}(\mathbb{CP}^n)$ if every facet of  the tropical hypersurface associated to $p$ is the common intersection of exactly $3$ polyhedra.  
\end{corollary}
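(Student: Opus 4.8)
The plan is to realize the current $dd^c[p\circ\Log]$ as a tropical current and then push it through the machinery already assembled in the excerpt. By Theorem \ref{thm-hyper-ddc} one has, as currents in $(\CC^*)^n$, the identity $dd^c[p\circ\Log]=\mathscr T_n^{n-1}(V_\mathbb T(p))$, where $V_\mathbb T(p)$ is the effective tropical hypersurface attached to $p$, and moreover the extension by zero of this current to $\mathbb{CP}^n$ is $\bar{\mathscr T}_n^{n-1}(V_\mathbb T(p))=dd^c[\tilde p\circ\Log]$, with $\tilde p$ the homogenization of $p$. Since a tropical polynomial is convex, $p\circ\Log$ is plurisubharmonic and the current is positive and closed of bidimension $(n-1,n-1)$; in this bidegree positivity and strong positivity coincide, so the current lies in $SPC^{n-1}$. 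It therefore suffices to verify that $V_\mathbb T(p)$ is a strongly extremal tropical cycle in the sense of the definition preceding Theorem \ref{maintheohigherdim}, after which the conclusion follows by combining Theorem \ref{maintheohigherdim}, the Remark following Definition \ref{extremality-def}, and Lemma \ref{lemmaextension}.

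I would then check the three defining conditions of strong extremality one at a time. Connectivity in codimension $1$ holds for every tropical hypersurface, since $V_\mathbb T(p)$ is the corner locus of the convex piecewise-linear function $p$ and its maximal cells are glued along common facets into a connected complex. The second condition, that each facet $W$ be adjacent to exactly $n-(n-1)+2=3$ maximal cells, is precisely the hypothesis of the corollary. The genuinely nontrivial point is the third condition: that for each facet $W$ the three projected primitive vectors $\{h_W(v_1),h_W(v_2),h_W(v_3)\}$ form a linearly sub-independent set in the sense of Definition \ref{subind}.

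The hard part will be this sub-independence step. I would work near a relative-interior point of $W$, where after projecting along $W$ the cycle $V_\mathbb T(p)$ is, up to the product with $W$, a one-dimensional balanced fan $C$ in the $2$-plane $\RR^n/\mathrm{span}(W)\cong\RR^2$ consisting of the three rays spanned by the $h_W(v_j)$, with balancing relation $\sum_{j=1}^{3}m_j\,h_W(v_j)=0$. Sub-independence of three vectors in $\RR^2$ amounts to no single one vanishing, which is automatic since each $v_j$ is primitive and not parallel to $W$, together with no two being parallel. If two of the $h_W(v_j)$ were parallel, the balancing relation would force the third to be parallel to them as well, so $C$ would lie in a single line; but a balanced trivalent fan supported on a line reduces to at most two distinct rays, contradicting that exactly three distinct maximal cells meet along $W$. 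Equivalently, via tropical--polyhedral duality the facet $W$ is dual to a $2$-dimensional cell of the regular subdivision of the Newton polytope of $p$, and having exactly three adjacent maximal cells means this dual cell is a genuine triangle, whose three edge-normals $h_W(v_j)$ are automatically pairwise non-parallel. Either way the set is sub-independent.

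With the three conditions in hand, Theorem \ref{maintheohigherdim} gives that $\mathscr T_n^{n-1}(V_\mathbb T(p))$ is strongly extremal in $\mathcal D'_{n-1,n-1}((\CC^*)^n)$. By the Remark following Definition \ref{extremality-def}, strong extremality of a strongly positive closed current implies extremality in $SPC^{n-1}$, whence $\mathscr T_n^{n-1}(V_\mathbb T(p))\in\mathcal E^{n-1}((\CC^*)^n)$. Finally, Lemma \ref{lemmaextension} transfers extremality to the trivial extension, yielding $dd^c[\tilde p\circ\Log]=\bar{\mathscr T}_n^{n-1}(V_\mathbb T(p))\in\mathcal E^{n-1}(\mathbb{CP}^n)$, which is the desired statement. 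I expect the only real content beyond bookkeeping to be the sub-independence argument, that is, extracting combinatorial genericity of the three projected directions purely from the trivalence of the facets.
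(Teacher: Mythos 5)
Your proposal is correct and follows exactly the route the paper takes: the paper's own proof is the single sentence ``By previous theorem and Theorem \ref{maintheohigherdim} one readily has the following,'' i.e.\ identify $dd^c[p\circ\Log]$ with $\mathscr T_n^{n-1}(V_{\mathbb T}(p))$ via Theorem \ref{thm-hyper-ddc}, invoke Theorem \ref{maintheohigherdim}, and extend by Lemma \ref{lemmaextension}. Your verification that trivalence of facets forces the remaining strong-extremality conditions (codimension-one connectivity and sub-independence of the three projected primitive vectors, which in the quotient plane $\RR^n/H_W\cong\RR^2$ reduces to the vectors being nonzero and pairwise non-parallel) is exactly the bookkeeping the paper leaves implicit, and it is sound.
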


\begin{remark}\label{rem-intersection}
\textnormal{
Let $V_1,\dots,V_n \subset \RR^n$ be the tropical hypersurfaces associated to $p_1,\dots,p_n: \RR^n \rightarrow \RR$, respectively. If these hypersurfaces intersect transversally, then the product
\begin{equation}\label{wedgeproducts}
\mathscr T_n^{n-1}(V_1) \wedge \dots \wedge \mathscr T_n^{n-1}(V_n)= dd^c\,[p_1 \circ \Log]\wedge \dots \wedge dd^c \, [p_n \circ \Log] 
\end{equation}
is well-defined. Using a formula due to A. Rashkovskii (see \cite{Rash}, \cite{Pass-Rull}) one has
\begin{equation}\label{rash-formula}
\frac{1}{n!}\int_{\Log^{-1}(E)}dd^c\,[p_1 \circ \Log]\wedge \dots \wedge dd^c \, [p_n \circ \Log] = \int_{E}\tilde{M}(p_1,\dots,p_n),
\end{equation}
where the right hand side is the real mixed Monge-Amp\`ere measure of the Borel set $E \subset \RR^n$ corresponding to the convex function $p_1,\dots, p_n.$
On the other hand for a tropical polynomial $p:\RR^n \rightarrow \RR $, one can calculate (as in \cite{yger} Example 3.20), 
$$
{M}(p):= \tilde{M}(p,\dots,p)= \sum_{a \in \,\mathcal{C}_0(V_{\mathbb{T}}(p))} {\rm Vol}_n(\{a \}^*)\delta_a\,,
$$
where $\delta_a$ denotes the Dirac mass at the vertex $a$ of $V_{\mathbb{T}}(p),$ and ${\rm Vol}_n(\{a \}^*)$ is the volume of the $n$-cell dual to $\{a\}$ in a dual decomposition of the Newton polytope of $p.$ This formula therefore, corresponds to a stable intersection, and also gives 
$$
\tilde{M}(p_1,\dots,p_n)= \sum_{\{a\} \in \,\mathcal{C}_0(V_1\cap \dots \cap V_n)} {\rm Vol}_n(\{a \}^*)\delta_a\,.
$$
From which the tropical B\'ezout's and Bernstein's theorems follow (see \cite{Bihan}, \cite{firststeps}).  We also remark that in view of (\ref{wedgeproducts}), (\ref{rash-formula}) relates the intersection of toric sets to intersection of tropical cycles. 
}
\end{remark}

\section{Amoebas and approximations of tropical currents}\label{approximationsection}
In this section we assume that all the tropical cycles are effective. 

\vskip 2mm 
\noindent
Suppose $\{X_t \}_{t\in \RR_{+}}$ is a family of algebraic cycles in $(\CC^*)^n$, $V_{\mathbb{T}}$ a tropical cycle, and 
$$
\lim_{t \rightarrow \infty} \Log(X_t) = V_{\mathbb{T}}
$$
with respect to the Hausdorff metric on compact sets of $\RR^n$. As we have mentioned in Section \ref{tropicalsection}, if such a limit exists, this limit inherits the structure of a tropical cycle, the weights of this  tropical cycle being related to the degrees of the cycles $X_t$. The approximation problem considered in \cite{brugalle-shaw},\cite{katz},\cite{speyer thesis},\cite{mb} deals with approximation of a tropical cycle or a tropical curve by  algebraic varieties $X_t$ with equal total degrees. However we are interested here in the problem of approximations of the tropical cycles as sets, which makes the approximation more flexible. For instance, Grigory Mikhalkin's example of a spatial tropical cubic (\cite{Mikh-trop-enum}, \cite{speyer thesis}) of genus $1$ is not approximable by amoebas of cubic curves in $(\CC^*)^3$ but, as a set, it is approximable by a family of sextic curves, the resulting sextic tropical curve being the cubic tropical curve with doubled weights (see \cite{moi}). As for the theory of currents, it is important to know if there are elements of $\mathcal{E}^k(X)$ (with Hodge classes) which are not in $\overline{\mathcal{I}^k(X)}~$, where $X$ is a projective variety (see \cite{DemEx}, \cite{Dem-analytic}). In this section we relate the approximation problem of tropical cycles by amoebas to the problem of approximating tropical currents by integration currents along analytic cycles with positive coefficients. 

\begin{definition}\label{set-wise-def} \rm
We call a tropical cycle set-wise approximable if its underlying set is approximable in Hausdorff metric by amoebas of algebraic varieties of any degree. 
\end{definition}

\begin{remark}
\textnormal{
The set-wise approximability for strongly extremal tropical cycles is equivalent to having a multiple (obtained by multiplying the weights) which is approximable by amoebas of algebraic subvarieties of $(\CC^*)^n$ with equal degrees. 
}
\end{remark}

\begin{example}
{\rm Consider the tropical polynomial
$$p:\RR^n \rightarrow \RR, \quad x=(x_1,\dots,x_n) \mapsto \max_{\alpha} \big\{c_{\alpha}+ \alpha_1 x_1+\dots+\alpha_n x_n\big\}
$$ 
attached to a finite set of indices $\alpha= (\alpha_1,\dots,\alpha_n)\in (\mathbb{Z}_{\geq 0})^n$. Now, for each $m,l \in \mathbb{N}^*$,  consider the polynomial map  
$$
f_{l,m}: \CC^n \rightarrow \CC, \quad z=(z_1,\dots,z_n)\mapsto \sum_{\alpha} \exp({l \,c_{\alpha}}) z_1^{m \alpha_1}\dots z_n^{m \alpha_n}.
$$
It is easy to see that, in the sense of distributions,  
$$
\lim_{m \rightarrow \infty} \frac{1}{m} \, \log|f_{m,m}(z)|  =  p \circ \Log (z).
$$  
Poincar\'e-Lelong equation (\cite{Dembook}, page 143) yields (in the sense of currents) 
\begin{equation}\label{pioncareloleng}
\lim_{m \rightarrow \infty} \frac{1}{m}[Z_{f_{m,m}}]= dd^c [ p \circ \Log(z)],
\end{equation}
where $Z_{f_{m,m}}$ denotes the divisor of $f_{m,m}$ with multiplicities taken into account. Moreover
$$
\lim_{m\rightarrow \infty}\Log (\text{{\rm Supp} $[Z_{f_{m,m}}]$}) = \lim_{m\rightarrow \infty} \mathcal{A}_{f_{m,m}}= \lim_{m\rightarrow \infty}\frac{1}{m} \mathcal{A}_{f_{m,1}}= V_{\mathbb{T}}(p),
$$
where in the third equation ``multiplying'' the amoeba $\mathcal{A}_{f_{m,1}}$ by $1/m$ means dilating this amoeba by this factor. Therefore, the support of the currents on the left hand side of (\ref{pioncareloleng}) approximates the support of the current on the right hand side and the coefficient $\frac{1}{m}$ makes the total masses equal. Combining this with Theorem \ref{thm-hyper-ddc} one has 
\begin{equation}\label{poincare-lelong-tropical}
\lim_{m \rightarrow \infty} \frac{1}{m}[Z_{f_{m,m}}]= dd^c [p \circ \Log(z)] = \mathscr{T}_n^{n-1}(V_{\T})\,.
\end{equation}
}
\end{example}
 
Assume now that $V_{\mathbb{T}} \subset \RR^n$ is an effective strongly extremal tropical $p$-cycle. Suppose next that there is a family of algebraic $p$-cycles $(X_t)_{t>1}$ in 
$(\mathbb C^*)^n$ such that we have the set-wise approximation  
\begin{equation}\label{assumptionloglim}
\lim_{t \rightarrow \infty} \Log_t(X_t) = V_{\mathbb{T}}\,,
\end{equation}
where $\Log_t (z_1,\dots,z_n) := (\log |z_1|^{1/\log t},\dots,\log |z_n|^{1/\log t})$ for $t>1$. 
Starting with such a set-wise approximation, we intend to find a sequence of integration currents 
$\mathcal I^p((\C^*)^n)$ that converges to a multiple of $\mathscr T_n^p(V_\mathbb T)$.
\vskip 2mm 
For every positive integer $m$, define the proper smooth map 
\begin{equation}
\begin{split}
\Phi_m: \CC^n \rightarrow \CC^n\ ,
\ (z_1,\dots,z_n) \mapsto (z_1^m,\dots,z_n^m)
\end{split}
\end{equation}
and consider the current integration current $\Phi_m^* [X_t]:= [\Phi_m^{-1}(X_t)]$. The support of this current 
is obviously the set
\begin{equation}\label{7.22}
\begin{split}
&\Phi_m^{-1}(X_t)= 
\big\{(w_1,\dots,w_n)\in (\mathbb C^*)^n\,;\, (w_1^m,\dots,w_n^m) \in X_t \big\} \\
&\quad =\quad \Big\{\Big(\exp\big(\frac{2 \pi i k_1+\arg(z_1)}{m}\big)|z_1|^{1/m},\dots,
\exp\big(\frac{2 \pi i k_n+\arg(z_n)}{m}\big)|z_n|^{1/m} \Big) \ , \\ & \quad\quad \quad\quad\quad\quad\quad\quad\quad\quad\quad\quad\quad\quad\quad \quad\quad\quad (z_1,\dots,z_n) \in X_t \ , 0\leq k_j\leq m-1 \Big\}.
\end{split}
\end{equation}
\vskip 1mm
\noindent
Note that as $m$ increases, the set $\big\{e^{2 \pi i k/m}, k=0,\dots,m-1 \big\}$  tends to a dense set in the unit circle $\mathbb S^1$. Let $m~: [1,\infty[\rightarrow \N$ be an increasing function 
tending to infinity when $t$ tends to infinity. 
Therefore the support of a limit current for any convergent sequence 
of the form $\big(\lambda_{m(t_{k})} [\Phi_{m(t_{k})}^{-1} (X_{t_{k}})/\deg X_t]\big)_k$ 
such that $(t_k)_k$ tends to $+\infty$, is necessarily of the form $\Log^{-1}(V)$ for some closed 
set $V\subset \R^n$.     
\vskip 1mm
\noindent
On the other hand, if $x=(x_1,\dots,x_n) \in V_{\mathbb{T}}$, then there exists a sequence of points 
$$
\Big(\zeta_{t_{\nu_k}} = (\zeta_{t_{\nu_k},1},\dots,\zeta_{t_{\nu_k},n}) \in C_{t_{\nu_k}}\Big)_k 
$$
such that 
$$
\Log_{t_{\nu_k}}(\zeta_{t_{\nu_k}}) \rightarrow x,
$$
or
$$(|\zeta_{t_{\nu_k},1}|^{1/\log t_{\nu_k}},\dots,|\zeta_{t_{\nu_k},n}|^{1/\log t_{\nu_k}}) \rightarrow (e^{x_1},\dots,e^{x_n})$$  
as the sub-sequence $(\nu_k)_k = (\nu_k(x))_k$ tends to $+\infty$. Comparing this with (\ref{7.22}), if one takes $m~: t\in [1,+\infty[ \mapsto [\log t]$, the integer part of $\log t$, then the support of a limit current for any convergent sequence 
of the form $\big(\lambda_{m(t_{k})} [\Phi_{m(t_{k})}^{-1} (X_{t_{k}})]\big)_k$ 
such that $(t_k)_k$ tends to $+\infty$ equals necessarily to $V_{\mathbb{T}}$\,. 
If one takes $\lambda_{m} = m^{n-p}$ the family of currents  
$$
\frac{1}{(m(t))^{n-p}}\, \frac{1}{\deg X_t} \, \big[\Phi_{m(t)}^* [X_t]\big],\qquad t>1 
$$
is normalized (with degrees all equal to $1$). 
Thanks to Theorem \ref{maintheorem}, any subsequence of it converges towards the same multiple 
$\lambda \mathscr T_n^p(V_\mathbb T)$ ($\lambda >0$) of the extremal current $\mathscr{T}_n^p(V_{\mathbb{T}})$.  
So we have proved the following.  

\begin{theorem}\label{current-amoeba-approx}
Assume that the tropical cycle $V_{\mathbb{T}}$ is strongly extremal and set-wise approximable as $\lim_{t\rightarrow +\infty} \Log_t (X_t)$ by amoebas 
of irreducible algebraic $p$-cycles $(X_t)_{t>1}$ of $(\CC^*)^n$. Then there exists $\lambda> 0$ such that
$$
\mathscr T_n^p(V_\mathbb T) = \lambda\lim_{m \rightarrow \infty} \frac{1}{m^{n-p}} \Phi_m^*[X_{e^m}].
$$
In particular, $\mathscr T_n^p(V_\mathbb T) \in \overline{\mathcal{I}^p\big((\CC^*)^n \big)}$. 
\end{theorem}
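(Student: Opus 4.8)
The plan is to realize $\mathscr T_n^p(V_{\mathbb T})$ as a weak limit of normalized integration currents pulled back through the power maps $\Phi_m$, and then to identify that limit by means of the strong extremality established in Theorem \ref{maintheohigherdim}. The decisive elementary observation is that $\Phi_m$ interacts with $\Log$ in the simplest possible way: from $(w_1^m,\dots,w_n^m)=(z_1,\dots,z_n)$ one gets $|w_j|=|z_j|^{1/m}$, so that $\Log(\Phi_m^{-1}(X))=\tfrac1m\Log(X)=\Log_{e^m}(X)$ for every $X$ when $m=\log t$, $t=e^m$. Taking $X=X_{e^m}$ and using the set-wise hypothesis \eqref{assumptionloglim}, the amoebas $\Log(\Phi_m^{-1}(X_{e^m}))$ converge in the Hausdorff metric to $V_{\mathbb T}$, hence the supports of $\Phi_m^{-1}(X_{e^m})$ converge to $\Log^{-1}(V_{\mathbb T})$. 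First I would record the degree identity $\deg\Phi_m^{-1}(X)=m^{\,n-p}\deg X$ (coming from $\Phi_m^*\mathcal O(1)=\mathcal O(m)$ and $[X]=\deg X\cdot\omega^{n-p}$), so that the positive closed $(p,p)$-currents
$$
R_m:=\frac{1}{m^{\,n-p}\,\deg X_{e^m}}\,\Phi_m^*[X_{e^m}]
$$
extend, via their projective (Zariski) closures, to positive closed currents of total mass $1$ on $\mathbb{CP}^n$.

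Next I would extract a convergent subsequence. The uniform mass bound gives local mass bounds in $(\CC^*)^n$, so by Banach--Alaoglu some subsequence $(R_{m_k})_k$ converges weakly to a positive closed $(p,p)$-current $S$ with $\mathrm{Supp}(S)\subseteq\Log^{-1}(V_{\mathbb T})$ (support can only shrink under weak limits). Now $S$ is a normal closed current whose support lies in $\Log^{-1}(V_{\mathbb T})$, which is exactly the situation under which the proof of Lemma \ref{mainlem-higherdim} operates: one decomposes $S$ on each open set $\mathcal U_P$ by Theorem \ref{supportdecom}, writing $S_{|\mathcal U_P}=\int[\Delta_{P}(\theta)]\,d\mu_P(\theta)$, discards the codimension-one part by Theorem \ref{supportsmall}, and runs the Fourier-coefficient computation \eqref{comput2} against the test forms $\omega_{\eta,\nu}^{[K,J]}$. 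The balancing condition together with the sub-independence of $\{h_W(v_P)\}$ then forces every $\mu_P$ to be a common multiple $\rho\,m_P$ of Lebesgue measure, whence $S=\rho\,\mathscr T_n^p(V_{\mathbb T})$ for some $\rho\ge 0$.

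The main obstacle is to rule out $\rho=0$, that is, to prevent the normalized mass from escaping entirely to the toric boundary $\mathbb{CP}^n\setminus(\CC^*)^n$ in the limit. I would handle this with a local lower mass bound: fixing a point $x_0$ in the relative interior of a top-dimensional cell $P_0$ (so $m_{P_0}\ge1$) and a small ball $B\Subset(\CC^*)^n$ meeting $\Log^{-1}(x_0)$, the set-convergence $\Log(\Phi_m^{-1}(X_{e^m}))\to V_{\mathbb T}$ guarantees that for large $m$ the cycle $\Phi_m^{-1}(X_{e^m})$ has an analytic sheet through $B$ whose $\Log$-image approximates $P_0$; a slicing estimate (Wirtinger, comparing mass with Euclidean volume of the sheet) should then bound $\mathrm{mass}_B(R_m)$ below by a constant independent of $m$, giving $\mathrm{mass}_B(S)>0$ and hence $\rho>0$. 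This is the genuinely delicate step, since mass is only lower semicontinuous under weak limits and one must check that the $1/\deg X_{e^m}$ normalization does not dilute the relevant sheet.

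Finally, for the exact limit formula I would note that, the cohomology class of a closed current being preserved under weak limits, each subsequential limit has total mass $1$ in $\mathbb{CP}^n$; once boundary mass is excluded (the same control as above), the common value of $\rho$ is pinned down, yielding a genuine limit and a constant $\lambda=1/\rho>0$ with $\mathscr T_n^p(V_{\mathbb T})=\lambda\lim_m R_m$ (the degree factor being absorbed into $\lambda$). For the assertion $\mathscr T_n^p(V_{\mathbb T})\in\overline{\mathcal I^p((\CC^*)^n)}$ it in fact suffices that a single subsequence has $\rho>0$: each $R_m$ is a positive multiple of the integration current along the (possibly reducible) analytic $p$-cycle $\Phi_m^{-1}(X_{e^m})$, so it lies in the closed convex cone generated by $\mathcal I^p$, and since $\mathscr T_n^p(V_{\mathbb T})$ spans an extreme ray of that cone (by its extremality, via the Remark following Definition \ref{extremality-def} and the inclusion $\mathcal I^p\subset\mathcal E^p$), Milman's converse to the Krein--Milman theorem places it in the closure of the generating rays, namely in $\overline{\mathcal I^p((\CC^*)^n)}$.
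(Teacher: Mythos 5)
Your proposal follows essentially the same route as the paper's own argument: the same maps $\Phi_m$, the same normalization $R_m=\frac{1}{m^{n-p}\deg X_{e^m}}\Phi_m^*[X_{e^m}]$, weak compactness to extract limits, and identification of every subsequential limit as $\rho\,\mathscr T_n^p(V_{\mathbb T})$ by running the decomposition--Fourier machinery of Lemma \ref{mainlem-higherdim} on a normal closed positive current supported \emph{inside} $\Log^{-1}(V_{\mathbb T})$. That identification step is sound, and you are right that the proofs of Theorem \ref{maintheohigherdim} only need containment of supports rather than equality. Your observation that $\Phi_m^{-1}(X_{e^m})$ is in general reducible --- so that the limit formula by itself only places $\mathscr T_n^p(V_{\mathbb T})$ in the closed \emph{cone} generated by $\mathcal I^p$, and some extra argument (you propose extremality plus Milman's converse to Krein--Milman) is needed to land in $\overline{\mathcal I^p}$ --- is a point the paper passes over in silence, and flagging it is to your credit.

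The genuine gap is the step ruling out $\rho=0$, and as you propose it the step would fail. A single analytic sheet of $\Phi_m^{-1}(X_{e^m})$ through a fixed ball $B$ of radius $r$ has mass bounded below by $c_p r^{2p}$ (Lelong--Wirtinger), but this bound is then divided by $m^{n-p}\deg X_{e^m}\to\infty$, so the resulting lower bound for the mass of $R_m$ in $B$ tends to $0$ --- exactly the dilution you acknowledge in your caveat, and the caveat is not resolved. What is actually required is that a definite \emph{fraction} of the normalized mass remains over a fixed compact $K\subset\mathbb R^n$; writing $\omega_0=\sum_j \frac{i}{2}\,dz_j\wedge d\bar z_j/|z_j|^2$ for the invariant form, the needed statement is that $\liminf_m \big(m^{p}\deg X_{e^m}\big)^{-1}\!\int_{X_{e^m}\cap \Log^{-1}(mK)}\omega_0^p>0$, i.e.\ an equidistribution-type lower bound saying that the cycles $X_{e^m}$ carry mass proportional to their degree over the window $\Log^{-1}(mK)$ (for curves this comes from the fact that each of the $\deg X_{e^m}$ ends, counted with multiplicity, must traverse the window; for general $p$ it needs a genuine argument). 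One sheet through $B$ gives none of this. For what it is worth, the paper does not supply such an argument either: it asserts $\lambda>0$ from the remark that the normalized currents have ``degrees all equal to $1$'' in $\mathbb{CP}^n$, which by itself does not exclude all mass escaping to the toric boundary. Note also that this same missing control underlies your two later steps: the uniqueness of $\rho$ across subsequences (you need the boundary mass to be excluded, as you say), and the Milman argument itself, since Milman's theorem needs the compact mass-one base in $\mathbb{CP}^n$ and hence needs $\bar{\mathscr T}_n^p(V_{\mathbb T})$ (with no boundary component, via Lemma \ref{lemmaextension}) to lie in the closed convex hull of the normalized integration currents there. So the skeleton of your proof matches the paper, but the one analytically delicate point is left open in both, and the specific patch you suggest for it does not work.
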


\begin{remark}
{\rm Let $V_{\mathbb{T}}$ be an effective tropical $p$-cycle. 
By Theorem \ref{lemmaextension} the current  $\mathscr T_n^p(V_\mathbb T)
\in SPC^p((\C^*)^n)$ can be extended by zero to $\bar{\mathscr T}_n^p(V_\mathbb T)\in SPC^p(\mathbb{CP}^n).$ As a result, if in the above theorem one approximates $V_{\mathbb{T}}$ by amoebas of irreducible algebraic cycles (= analytic cycles by Chow's theorem) of $\mathbb{CP}^n$ which do not lie entirely in $\{z_0\cdots z_n=0\}$ then the theorem also
gives $\bar{\mathscr T}_n^p(V_\mathbb T)\in  \overline{\mathcal{I}^p\big(\mathbb{CP}^n \big)}$.}
\end{remark}

The above discussion highlights the following important questions. 
\begin{problem}
{\rm Are there strongly extremal tropical cycles which are not set-wise approximable?} 
\end{problem} 

\begin{problem}
[Converse of Theorem \ref{current-amoeba-approx}] 
{\rm Assume $V_{\mathbb{T}}$ is a tropical $p$-cycle such that $\mathscr {T}_n^p(V_\mathbb{T})$ is extremal. 
Does $\mathscr T_n^p(V_\mathbb T)\in \overline{\mathcal{I}^p\big((\CC^*)^n \big)}$ imply that $V_{\mathbb{T}}$ is set-wise approximable by amoebas of algebraic varieties in $(\CC^*)^n$?} 
\end{problem}

\begin{problem}
{\rm How one could generalize these constructions to ``infinite" tropical cycles?} 
\end{problem}
\subsection*{Acknowledgements}
I am grateful to Jean-Pierre Demailly for his hospitality in Grenoble, for the invaluable discussions, and for his guidance in explicitly resolving the extremality of $dd^c\max\log~\{1,2|z_1|,3|z_2|\}$.  I am also grateful to Alain Yger for useful discussions, and for enhancing the exposition of the notes as well as to Erwan Brugall\'e for reading an early draft of this work, and for his comments on tropical geometry, especially the set-wise approximations of the spatial cubics. I also thank Vincent Koziarz for reading an early draft, and his question/comment on foliations.

\
\noindent
F. Babaee, Institut de Mathematiques, Universit\'e de Bordeaux,  33405 Talence,  France. \texttt{farhad.babaee@math.u-bordeaux1.fr}

\end{document}